\newtheorem{theorem}{Theorem}[section]
\newtheorem{lemma}[theorem]{Lemma}
\newtheorem{corollary}[theorem]{Corollary}
\newcommand{\mc}[1]{{\mathcal #1}}
\newcommand{\mf}[1]{{\mathfrak #1}}
\newcommand{\mb}[1]{{\mathbf #1}}
\newcommand{\bb}[1]{{\mathbb #1}}
\newcommand{\ms}[1]{{\mathscr #1}}
\newcommand{\<}{\langle}
\renewcommand{\>}{\rangle}
\renewcommand{\Cap}{{\rm cap}}
\begin{document}

\title[Hitting times of rare events in BDSSEP]
{Hitting times of rare events in boundary driven
  symmetric simple exclusion processes}

\author{O. Benois, C. Landim, M. Mourragui}

\address{\noindent CNRS UMR 6085, Universit\'e de Rouen, Avenue de
  l'Universit\'e, BP.12, Technop\^ole du Madril\-let, F76801
  Saint-\'Etienne-du-Rouvray, France.  \newline e-mail: \rm
  \texttt{olivier.benois@univ-rouen.fr} }

\address{\noindent IMPA, Estrada Dona Castorina 110, CEP 22460 Rio de
  Janeiro, Brasil.  \newline e-mail: \rm
  \texttt{brunog@impa.br} }

\address{\noindent IMPA, Estrada Dona Castorina 110, CEP 22460 Rio de
  Janeiro, Brasil and CNRS UMR 6085, Universit\'e de Rouen, Avenue de
  l'Universit\'e, BP.12, Technop\^ole du Madril\-let, F76801
  Saint-\'Etienne-du-Rouvray, France.  \newline e-mail: \rm
  \texttt{landim@impa.br} }

\address{\noindent CNRS UMR 6085, Universit\'e de Rouen, Avenue de
  l'Universit\'e, BP.12, Technop\^ole du Madril\-let, F76801
  Saint-\'Etienne-du-Rouvray, France.  \newline e-mail: \rm
  \texttt{mustapha.mourragui@univ-rouen.fr} }

\keywords{hitting times, rare events, non-reversible Markov processes,
interacting particle systems}

\begin{abstract}
  In the boundary driven symmetric simple exclusion process consider
  an open set $\ms O$ of density profiles which does not contain the
  stationary density profile. We prove that the first time the
  empirical measure visits the set $\ms O$ converges to an exponential
  distribution.
\end{abstract}

\maketitle


\section{Introduction}
\label{sec0}

It has long been observed that in finite-state, reversible Markov
processes the hitting time of a rare event is approximately
exponentially distributed \cite{k1, b1, ab1, ab2}. For non-reversible
dynamics much less is known. By estimating the total variation
distance between the stationary measure and the quasi-stationary
measure, Aldous \cite{a1} proved that the distribution of the hitting
time of a rare event is close to an exponential random variable when
the mixing time is small compared to the stationary expectation of the
hitting time. Fill and Lyzinski \cite{fl1} proved that starting from
the stationary distribution the hitting time of a configuration $\eta$
can be represented as an independent geometric sum of i.i.d. random
variables if the probability of hitting this configuration $\eta$ at
time $t$ starting from $\eta$, viz. $p_t(\eta,\eta)$, decreases in
time. This representation permits to obtain bounds for the distance
between the distribution of the hitting time and the distribution of
an exponential random variable. Imbuzeiro \cite{i1} proved that the
hitting time of a rare event $A$ is approximately exponential starting
from a distribution $\nu$ if starting from $\nu$ the probability of
hitting $A$ before the mixing time is small.  Fernandez et
al. \cite{fmns1} are presently working on this problem in the sequel
of \cite{bbf1}.

In this article we examine the hitting time of rare events in a well
studied non-reversible dynamics, the boundary driven symmetric simple
exclusion processes (BDSSEP). Beyond the complications arising from
non-reversibility, this model presents a further difficulty in the
lack of an explicit formula for the stationary measure. This obstacle
is overcome by the use of a large deviations principle to estimate the
measure of sets, but prevents us from obtaining bounds for the
stationary expectation of the hitting time with errors sharper than
exponential.

In the context of interacting particle systems the convergence of
hitting times of rare events to exponential random variables has been
abundantly investigated. Several results have been obtained for
non-conservative dynamics, processes in which the local number of
particles changes in time and which lose memory much faster than
conservative ones. On the conservative side, which includes the the
dynamics examined here, Ferrari et al. \cite{fgl1} considered the case
of a totally asymmetric one-dimensional zero-range process, and
Ferrari et al. \cite{fgl2} examined the case of the one-dimensional
symmetric simple exclusion process. This latter result was generalized
to any dimension and extended to independent random walks by Asselah
and Dai Pra \cite{ap2, ap3}.

The article is organized as follows. In the next section we state the
main result. In Section \ref{sec2} we present a general method to
derive the asymptotic exponentiality of the hitting time of a rare
event for finite-state, non-reversible continuous-time Markov
processes starting from a measure not too far from the stationary
measure in the sense of Lemmas \ref{s17}, \ref{s14} or \ref{s15}. In
Section \ref{sec3} we estimate the expectation of the hitting time
under the stationary state assuming a dynamical large deviations
principle. In Section \ref{sec4} we apply the results presented in the
two previous section to the BDSSEP.

\section{Notation and Results}
\label{sec1}

\noindent{\bf The one-dimensional boundary driven symmetric simple
  exclusion process (BDSSEP)}. For $N\ge 1$, let $\Lambda_N =\{1,
\dots, N-1\}$.  Fix $0< \alpha\le \beta< 1$ and consider the Markov
process $\{\eta^N(t) : t\ge 0\}$ on $\Omega_N = \{0,1\}^{\Lambda_N}$
whose generator $L_N$ is given by
\begin{eqnarray*}
(L_N f)(\eta) &=& \frac 12 \, \sum_{x=1}^{N-2} \{ f(\sigma^{x,x+1}
\eta)-f(\eta)\} \\
&+& \frac 12 \, \Big\{ \alpha [1-\eta(1)] + (1-\alpha) \eta(1) \Big\}
\{ f(\sigma^{1} \eta)-f(\eta)\} \\
&+& \frac 12 \, \Big\{ \beta [1-\eta(N-1)] + (1-\beta) \eta(N-1) \Big\}
\{ f(\sigma^{N-1} \eta)-f(\eta)\}\;.
\end{eqnarray*}
In this formula, $\eta = \{\eta(x),\, x\in\Lambda_N\}$ is a
configuration of the state space $\{0,1\}^{\Lambda_N}$ so that $\eta
(x)=0$ if and only if site $x$ is vacant for $\eta$;
$\sigma^{x,y}\eta$ is the configuration obtained from $\eta$ by
interchanging the occupation variables $\eta(x)$, $\eta(y)$:
$$
(\sigma^{x,y} \eta) (z)\; =\;
\left\{
\begin{array}{ll}
\eta (z)  & \hbox{if $z\neq x,y$}\; , \\
\eta (y)  & \hbox{if $ z=x$}\; , \\
\eta (x)  & \hbox{if $ z=y$}\; ;
\end{array}
\right.
$$
and $\sigma^{x}\eta$ is the configuration obtained from $\eta$ by
flipping the variable $\eta(x)$:
$$
(\sigma^{x} \eta) (z)\; =\;
\left\{
\begin{array}{ll}
\eta (z)  & \hbox{if $z\neq x$}\; , \\
1-\eta (z)  & \hbox{if $ z=x$}\; .
\end{array}
\right.
$$
Hence, at rate $\alpha$ (resp. $1-\alpha$) a particle is created
(resp. removed) at the boundary site $1$ if this site is vacant (resp.
occupied). The same phenomenon occurs at the boundary $x=N-1$ with
$\beta$ in place of $\alpha$.

Denote by $D(\bb R_+, \Omega_N)$ the Skorohod space of paths from $\bb
R_+$ to $\Omega_N$.  Let $\bb P^N_\eta$, $\eta\in\Omega_N$, be the
distribution of the Markov process $\eta^N(t)$ when the initial
configuration is $\eta$. The probability measure $\bb P^N_\eta$ is
thus a measure on the path space $D(\bb R_+, \Omega_N)$ endowed with
the Skorohod topology. Expectation with respect to $\bb P^N_\eta$ is
denoted by $\bb E^N_\eta$.

The finite state Markov process $\eta^N(t)$ is irreducible and has
therefore a unique stationary measure, denoted by $\nu^N_{\alpha,
  \beta}$. The process is reversible if and only if $\alpha=\beta$, in
which case the measure $\nu^N_{\alpha, \alpha}$ is a product measure.

\smallskip{\bf The empirical measure.} Denote by $\langle \cdot,\cdot
\rangle$ the inner product in $L_2 ([0,1])$ and set
\begin{equation*}
\label{dcm}
\ms M := \left\{ \rho \in L_\infty ([0,1]) \,:\:
0\leq \rho \leq 1 \right\}
\end{equation*}
which we equip with the topology induced by the weak convergence of
measures, namely a sequence $\{\rho^n : n\ge 1\} \subset \ms M$
converges to $\rho$ in $\ms M$ if and only if $\langle \rho^n, G
\rangle \to \langle \rho, G \rangle$ for any continuous function $G:
[0,1]\to\bb R$.  Note that $\ms M$ is a compact Polish space that we
consider endowed with the corresponding Borel $\sigma$-algebra.  

Let $d$ be a distance in $\ms M$ compatible with the weak topology,
\begin{equation}
\label{24}
d(\gamma,\gamma') \;=\; \sum_{k\ge 1} \frac 1{2^k} \, \big| \< \gamma,
F_k\> - \< \gamma', F_k\> \big|\;,
\end{equation}
where the continuous test functions $F_k$ are absolutely bounded by
$1$.

The empirical density of a configuration $\eta\in \Omega_N$, denoted
by $\pi^N(\eta) \in \ms M$, is defined as
\begin{equation*}
\label{eq:2}
\pi^N (\eta) \;:=\; \sum_{x=1}^{N-1} \eta (x) \,
\mb 1 \big\{ 
{\textstyle \big[ \frac{x}{N}- \frac{1}{2N}, \frac{x}{N}
+ \frac{1}{2N}\big)} \big\}\; , 
\end{equation*}
where $\mb 1\{A\}$ stands for the indicator function of the set $A$.

Denote by $\nabla$ the space derivative and by $\Delta$ the Laplacian.
It has been proved in \cite{els1} that under the stationary state
$\nu^N_{\alpha, \beta}$ the empirical measure $\pi^N$ converges in
probability to the unique solution of the elliptic equation
$$
\left\{
\begin{array}{l}
\Delta \rho = 0\;, \\
\rho(0) = \alpha\;, \quad \rho(1) = \beta \;.
\end{array}
\right.
$$
We denote the solution of this equation by $\bar\rho =
\bar\rho_{\alpha, \beta}$.

\subsection*{The dynamical rate function} To state the main result of
this article we need to introduce the rate functions of the dynamical
and the static large deviations principle of the empirical measure. We
start with the dynamical one.

For $T>0$ and positive integers $m,n$, we denote by
$C^{m,n}([0,T]\times[0,1])$ the space of functions $G \colon
[0,T]\times[0,1] \to\bb R$ with $m$ derivatives in time, $n$
derivatives in space which are continuous up to the boundary. We
improperly denote by $C^{m,n}_0([0,T]\times[0,1])$ the subset of
$C^{m,n}([0,T]\times[0,1])$ of the functions which vanish at the
endpoints of $[0,1]$, i.e.\ $G\in C^{m,n}([0,T]\times[0,1])$ belongs
to $C^{m,n}_0([0,T]\times[0,1])$ if and only if $G(t, 0)=G(t, 1)=0$,
$t\in[0,T]$.

Let the energy $\mc Q: D([0,T], \ms M) \to [0,\infty]$ be given by
\begin{eqnarray*}
\!\!\!\!\!\!\!\!\!\!\!\!\! &&
\mc Q(u) \;=\;  \\
\!\!\!\!\!\!\!\!\!\!\!\!\! && \quad
\sup_{G} \Big\{ \int_0^T dt \int_{0}^1 d\mb x\,  u(t,\mb x) \,
(\nabla G)(t,\mb x) \;-\; \frac 12  \int_0^T dt \int_{0}^1 d\mb x\,
G(t,\mb x)^2 \, \chi (u(t,\mb x)) \Big\}\;, 
\end{eqnarray*}
where $\chi:[0,1]\to\bb R_+$ is the mobility of the system, $\chi(a) =
a(1-a)$, and where the supremum is carried over all smooth functions
$G: [0,T]\times (0,1) \to\bb R$ with compact support.  It has been
shown in \cite{blm1} that the energy $\mc Q$ is convex and lower
semicontinuous. Moreover, if $\mc Q(u)$ is finite, $u$ has a
generalized space derivative, $\nabla u$, and
\begin{equation*}
\mc Q(u) \;=\; \frac 12 \int_0^T dt\, \int_{0}^1 d\mb x\,
\frac{(\nabla u(t))^2}{\chi (u(t))}\;\cdot 
\end{equation*}

Fix a function $\gamma\in\ms M$ which corresponds to the initial
profile.  For each $H$ in $C^{1,2}_0([0,T]\times[0,1])$, let $\hat
J_{H}(.|\gamma) = \hat J_{T, H, \gamma} \colon D([0,T], \ms M)\longrightarrow
\bb R$ be the functional given by
\begin{eqnarray}
\label{o01}
\hat J_H(u|\gamma) &:=& \big\langle u_T, H_T \big\rangle 
- \langle \gamma, H_0 \rangle
- \int_0^{T} \!dt\, \big\langle u_t, \partial_t H_t \big\rangle
\;-\; \frac 12 \int_0^{T} \!dt\, \big\langle u_t , \Delta H_t
\big\rangle\\
&+ &\frac {\beta}2   \int_0^{T} \! dt\, \nabla H_t(1) 
\; -\;  \frac {\alpha}2  \int_0^{T} \!dt\, \nabla H_t(0) 
\;-\; \frac 12 \int_0^{T} \!dt\,
\big\langle \chi( u_t ), \big( \nabla H_t \big)^2 \big\rangle \; .    
\nonumber 
\end{eqnarray}
Let $\hat I_{[0,T]} (\, \cdot \, | \gamma) \colon D([0,T],\mc
M)\longrightarrow [0,+\infty]$ be the functional defined by
\begin{equation*}
\hat I_{[0,T]} (u | \gamma) \; :=\; \sup_{H\in
  C^{1,2}_0([0,T]\times[0,1])} \hat J_H(u|\gamma)\; .
\end{equation*}
The dynamical rate functional $I_{[0,T]}(\cdot | \gamma): D([0,T], \ms
M) \to [0,\infty]$ is given by
\begin{equation}
\label{26}
I_{[0,T]}(u | \gamma) \;=\; 
\left\{
\begin{array}{ll}
\hat I_{[0,T]} (u | \gamma) & \text{if $\mc Q(u) < \infty$ \;,} \\
\infty & \text{otherwise.}
\end{array}
\right.
\end{equation}

\subsection*{The static rate functional}
Denote by $V: \ms M \to \bb R_+$ the quasi-potential associated to the
dynamical rate functions $I_{[0,T]}$:
\begin{equation}
\label{08}
V(\gamma) \;=\; \inf_{T>0} \inf \big\{ I_{[-T,0]} (u | \bar\rho) : u(-T) =
\bar\rho \,,\, u(0)=\gamma \,\}\;.
\end{equation}

It has been proved in \cite[Theorems 2.2, 4.5 and A.1]{bdgjl3} that
$V$ is bounded, convex and lower-semicontinuous, and that $V(\rho) >0$
for all $\rho\not = \bar\rho$. We are now in a position to state the
main result of this article.

\begin{theorem}
\label{s16}
Fix an open subset $\ms O$ of $\ms M$ such that $d(\bar\rho, \ms O)>0$
and let $H_{\ms O}$ be the hitting time of the set $\ms O$, $H_{\ms
  O}= \inf\{t : \pi^N(\eta(t)) \in \ms O\}$. Then, under
$\nu^N_{\alpha, \beta}$, $H_{\ms O}/E_{\nu^N_{\alpha, \beta}} [H_{\ms
  O}]$ converges in distribution to a mean one exponential
time. Moreover, if 
\begin{equation*}
\inf_{\gamma\in\ms O} V(\gamma) \;=\; 
\inf_{\gamma\in\overline{\ms O}} V(\gamma)\;,
\end{equation*}
where $\overline{\ms O}$ represents the closure of $\ms O$, we have
that
\begin{equation*}
\lim_{N\to\infty} \frac 1N \log 
\bb E_{\nu^N_{\alpha, \beta}}[H_{\ms O}] \;=\; \inf_{\gamma\in\ms O} V(\gamma)\;.
\end{equation*}
Finally, consider a subset $\ms B$ of $\ms M$ such that
\begin{equation}
\label{23}
\inf_{\gamma\in \ms B^o} V(\gamma) \;<\; 
\inf_{\gamma\in \overline{\ms O}} V(\gamma)\,
\end{equation} 
where $\ms B^o$ stands for the interior of $\ms B$.  Let $B_N =
(\pi^N)^{-1}(\ms B) = \{\eta\in\Omega_N : \pi^N(\eta)\in \ms B\}$ and
let $\mu_N$ be the probability measure on $\Omega_N$ defined by
$\mu_N(\eta) = \mb 1\{\eta\in B_N\} \nu^N_{\alpha,
  \beta}(\eta)/\nu^N_{\alpha, \beta}(B_N)$. Then, under $\mu_N$,
$H_{\ms O}/E_{\nu^N_{\alpha, \beta}} [H_{\ms O}]$ converges in
distribution to a mean one exponential time.
\end{theorem}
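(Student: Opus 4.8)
The plan is to deduce the final assertion from the stationary case treated above together with the general method of Section \ref{sec2}, the point being that when (\ref{23}) holds the process started from $\mu_N$ relaxes to $\nu^N_{\alpha,\beta}$ long before it can reach $\ms O$. Throughout write $\bb P_\mu := \sum_{\eta\in\Omega_N}\mu(\eta)\,\bb P^N_\eta$ for an initial measure $\mu$, set $\theta_N := \bb E_{\nu^N_{\alpha,\beta}}[H_{\ms O}]$, and abbreviate $v_{\ms O}:=\inf_{\gamma\in\overline{\ms O}}V(\gamma)$ and $v_{\ms B}:=\inf_{\gamma\in\ms B^o}V(\gamma)$, so that (\ref{23}) reads $v_{\ms B}<v_{\ms O}$; note that $d(\bar\rho,\ms O)>0$ forces $\bar\rho\notin\overline{\ms O}$ and hence $v_{\ms O}>0$. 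The first step is the pointwise domination $\mu_N(\eta)\le \nu^N_{\alpha,\beta}(\eta)/\nu^N_{\alpha,\beta}(B_N)$, valid for every $\eta\in\Omega_N$, which yields
\begin{equation*}
\bb P_{\mu_N}[E] \;\le\; \frac{1}{\nu^N_{\alpha,\beta}(B_N)}\,\bb P_{\nu^N_{\alpha,\beta}}[E]
\end{equation*}
for every event $E$ on the path space. Since $B_N=(\pi^N)^{-1}(\ms B)$ and the infimum defining $v_{\ms B}$ runs over the interior $\ms B^o$, the lower bound of the static large deviations principle (with rate function $V$) gives $\nu^N_{\alpha,\beta}(B_N)\ge \exp\{-N(v_{\ms B}+o(1))\}$.

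Next I would establish a short-time estimate under the stationary measure. Bounding $\bb P_{\nu^N_{\alpha,\beta}}[H_{\ms O}\le t]$ by $\nu^N_{\alpha,\beta}((\pi^N)^{-1}(\ms O))$ plus the stationary expected number of jumps that land in $(\pi^N)^{-1}(\ms O)$ during $[0,t]$, and using that the total jump rate is $O(N)$ while any configuration from which a single jump enters $(\pi^N)^{-1}(\ms O)$ has empirical profile within distance $O(1/N)$ of $\overline{\ms O}$, the upper bound of the static large deviations principle together with the lower semicontinuity of $V$ should give
\begin{equation*}
\bb P_{\nu^N_{\alpha,\beta}}[H_{\ms O}\le t] \;\le\; C\,N\,t\,\exp\{-N(v_{\ms O}+o(1))\}\qquad (t\ge 1)\;.
\end{equation*}

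Combining the two displays, for any $t_N\ge 1$,
\begin{equation*}
\bb P_{\mu_N}[H_{\ms O}\le t_N] \;\le\; C\,N\,t_N\,\exp\{-N(v_{\ms O}-v_{\ms B}+o(1))\}\;,
\end{equation*}
which tends to $0$ whenever $t_N$ grows subexponentially, precisely because $v_{\ms B}<v_{\ms O}$. For the BDSSEP the relaxation (and mixing) time is polynomial in $N$, whereas $\theta_N$ is of exponential order since $v_{\ms O}>0$; I would therefore take $t_N$ polynomial but larger than the relaxation time, so that the process started from $\mu_N$ reaches equilibrium before it can enter $\ms O$, while the conditional law at time $t_N$ given $\{H_{\ms O}>t_N\}$ is close to that started from $\nu^N_{\alpha,\beta}$. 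This is exactly the input required by the general method of Section \ref{sec2}: the hypotheses of one of Lemmas \ref{s17}, \ref{s14} or \ref{s15} are met, and that method then yields that, under $\mu_N$, $H_{\ms O}/\theta_N$ converges in distribution to a mean one exponential time.

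The hard part will be the short-time stationary estimate with the sharp rate $v_{\ms O}=\inf_{\overline{\ms O}}V$: one must check that the configurations contributing to the stationary flux into $(\pi^N)^{-1}(\ms O)$ have empirical profiles in a neighborhood of $\overline{\ms O}$ that shrinks as $N\to\infty$, so that lower semicontinuity of $V$ delivers the correct exponent, and then control the competition between the $O(N)$ total jump rate and this exponentially small large deviations cost uniformly in $N$, so that the resulting bound is genuinely of order $\exp\{-N(v_{\ms O}-v_{\ms B})\}$ up to subexponential corrections.
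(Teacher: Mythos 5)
Your proposal is correct and lands in the same place as the paper --- verification of condition \eqref{19} followed by Corollary \ref{s13} --- but it verifies \eqref{19} by a genuinely different and more elementary mechanism. The paper computes $d\mu_N/d\nu^N_{\alpha,\beta}=\mb 1\{B_N\}/\nu^N_{\alpha,\beta}(B_N)$ and feeds it into the enlarged-process capacity bound of Lemma \ref{s17} in its simplified form \eqref{22}: taking $\gamma_N^{-1}=S_N=N^4$ and using $R_N(\eta,\Omega_N)\le N$, the relevant quantity is $N^5\,\nu^N_{\alpha,\beta}(A_N)/\nu^N_{\alpha,\beta}(B_N)$, which vanishes by the static LDP (Theorem \ref{s09}) and hypothesis \eqref{23}; Lemma \ref{s12} and the mixing bound \eqref{20} ensure the hypotheses of Theorem \ref{s04}, and Corollary \ref{s13} concludes. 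You bypass the enlarged process and capacities altogether: the pointwise domination $\bb P_{\mu_N}\le\nu^N_{\alpha,\beta}(B_N)^{-1}\,\bb P_{\nu^N_{\alpha,\beta}}$ combined with the jump-counting estimate \eqref{01} under the stationary measure gives $\bb P_{\mu_N}[H_N<S_N]\le\nu^N_{\alpha,\beta}(B_N)^{-1}\big[\nu^N_{\alpha,\beta}(A_N)+S_N\, N\,\nu^N_{\alpha,\beta}(\pi^N\in\ms O_\delta)\big]$, and the same LDP comparison finishes. What your route costs is exactly the point you flag as ``the hard part'': you need the flux term with exponent close to $\inf_{\overline{\ms O}}V$, hence a sharp bound on a $\delta$-neighborhood $\ms O_\delta$ of $\overline{\ms O}$ rather than on $\overline{\ms O}$ itself. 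This is genuinely fine: a single jump moves $\pi^N$ by $O(1/N)$ in the metric $d$, so the configurations feeding the flux lie in $(\pi^N)^{-1}(\ms O_\delta)$ for large $N$ (this is precisely the outer-boundary inclusion proved in Lemma \ref{s12}), and $\inf_{\ms O_\delta}V\uparrow\inf_{\overline{\ms O}}V$ as $\delta\downarrow 0$ by lower semicontinuity of $V$ together with compactness of $\ms M$, so for $\delta$ small the exponent strictly beats $\inf_{\ms B^o}V$; the paper's capacity route sidesteps this entirely because its bound \eqref{22} involves only $\nu^N_{\alpha,\beta}(A_N)$, estimated on the closed set $\overline{\ms O}$. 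Two minor corrections: your closing appeal to ``one of Lemmas \ref{s17}, \ref{s14} or \ref{s15}'' is misplaced, since your argument does not check their hypotheses but establishes \eqref{19} directly, which is exactly the input Corollary \ref{s13} requires; and the first two assertions of the theorem, which you take as given, do enter your argument (the first so that the hypotheses of Theorem \ref{s04} invoked by Corollary \ref{s13} hold, the second so that $S_N\ll\bb E_{\nu^N_{\alpha,\beta}}[H_{\ms O}]$), but both follow, as in the paper, from Lemma \ref{s12}, \eqref{20}, Theorem \ref{s04} and Theorem \ref{s06}.
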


This result holds in all dimensions, we restricted ourselves to
dimension one for sake of simplicity.

\section{Hitting times of rare events have exponential distributions}
\label{sec2}

Consider a sequence of irreducible, continuous-time Markov processes
$\{\eta^N(t) : t\ge 0\}$, $N\ge 1$, taking values on a finite state
space $\Omega_N$. The points of $\Omega_N$ are represented by the
Greek letters $\eta$, $\xi$. Denote by $\nu_N$ the unique stationary
state, by $L_N$ the generator of the process, by $\lambda_N(\eta)$,
$\eta\in \Omega_N$, the holding rates, by $p_N(\eta,\xi)$,
$\xi\not=\eta\in \Omega_N$, the jump probabilities, and by
$R_N(\eta,\xi) = \lambda _N(\eta)\, p_N(\eta,\xi)$ the jump rates. In
particular, for every function $f:\Omega_N\to \bb R$,
\begin{equation*}
(L_N f)(\eta) \;=\; \sum_{\xi\in\Omega_N} R_N(\eta,\xi) \,[f(\xi) -
f(\eta)]\; ,
\end{equation*}
We often omit the superscript $N$ of $\eta^N(t)$.

For a subset $A$ of $\Omega_N$, denote by $H_A$ (resp. $H^+_A$) the
hitting (resp. return) time of a set $A$:
\begin{equation*}
\begin{split}
& H_A \,:=\, \inf \big\{ s > 0 : \eta (s) \in A \big\}\;, \\
& \quad H^+_A \,:=\, \inf \{ t>0 : \eta (t) \in A, \eta(s) \not= \eta(0)
\;\;\textrm{for some $0< s < t$}\}\;.
\end{split}
\end{equation*}
When the set $A$ is a singleton $\{\eta\}$, we denote $H_{\{\eta\}}$,
$H^+_{\{\eta\}}$ by $H_\eta$, $H^+_{\eta}$, respectively.

Let $D(\bb R_+, \Omega_N)$ be the space of $\Omega_N$-valued, right
continuous paths with left limits endowed with the Skorohod topology.
Denote by $\bb P_\eta = \bb P^N_\eta$, $\eta\in\Omega_N$, the
probability measure on $D(\bb R_+, \Omega_N)$ induced by the Markov
process $\eta(t)$ starting from $\eta$. Expectation with respect to
$\bb P_\eta$ is represented by $\bb E_\eta$. For a probability measure
$\mu$ in $\Omega_N$, $\bb P_\mu[\,\cdot\,] = \sum_{\eta\in\Omega_N}
\mu(\eta) \, \bb P_\eta [\,\cdot\,]$, with the same notation for
expectations.

Let $P_t (\eta,\xi)$, $t\ge 0$, $\eta$, $\xi\in\Omega_N$, be the
semigroup associated to $\eta(t)$, $P_t (\eta,\xi) = \bb P_\eta [
\eta(t)=\xi]$. Denote by $\Vert\mu -\nu\Vert_{\rm TV}$ the total
variation distance between two probability measures $\mu$ and $\nu$
defined on $\Omega_N$. Let $T^{\rm mix}_N$ be the mixing time of the
process $\eta(t)$:
\begin{equation*}
T^{\rm mix}_N \;=\; \inf\Big\{t>0 : \max_{\eta\in\Omega_N} \Vert
P_t(\eta, \,\cdot\,) - \nu_N \Vert_{\rm TV} \le \frac 14 \Big\}\;.
\end{equation*}

Let $A_N$ be a sequence of subsets of $\Omega_N$ such that
\begin{equation}
\label{05}
\lim_{N\to\infty} \nu_N(A_N) \;=\; 0\;.
\end{equation}
Denote by $H_N = H_{A_N}$ the hitting time of $A_N$:
\begin{equation*}
H_N \;=\; \inf\{t>0: \eta^N(t)\in A_N\}\;,
\end{equation*}
and by $r_N(A_N^c, A_N)$ the average rate at which the process jumps
from $A_N^c $ to $A_N$:
\begin{equation*}
r_N(A_N^c, A_N) \;=\; \frac 1{\nu_N (A_N^c)} 
\sum_{\xi \in A_N^c} \nu_N (\xi)\, R_N(\xi, A_N)\;,
\end{equation*}
where $R_N(\xi, A_N) = \sum_{\zeta\in A_N} R_N(\xi, \zeta)$.

Nest statement is the main result of this section. It has to be
compared with \cite[Theorem 1.4]{a1}. Instead of requiring that the
mixing time is small compared to the the stationary expectation of the
hitting time, we assume that the mixing time is small compared to the
inverse of the averaged jump rate, $r_N(A_N^c, A_N)^{-1}$, a quantity
easily estimated. Moreover, by \cite[Lemma 2.3]{bg1}, for reversible
dynamics, $r_N(A_N^c, A_N)^{-1}$ is bounded by the expected value of
the hitting time of $A_N$ starting from the quasi-stationary state.

\begin{theorem}
\label{s04}
Let $A_N$ be a sequence of subsets of $\Omega_N$ satisfying
\eqref{05}.  Assume that $T^{\rm mix}_N \ll r_N(A_N^c,
A_N)^{-1}$. Then, under $\nu_N$, the sequence $H_N/\bb E_{\nu_N}
[H_N]$ converges in distribution to a mean one exponential random
variable.
\end{theorem}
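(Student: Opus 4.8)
The plan is to establish an approximate lack-of-memory property for the survival function $\Phi_N(t) := \bb P_{\nu_N}[H_N > t]$ and then to extract the exponential law by a soft analytic argument. Write $\theta_N := \bb E_{\nu_N}[H_N] = \int_0^\infty \Phi_N(t)\,dt$ and $r_N := r_N(A_N^c,A_N)$. Integrating by parts, the Laplace transform satisfies $\bb E_{\nu_N}[e^{-\lambda H_N/\theta_N}] = 1 - \lambda \int_0^\infty e^{-\lambda\tau} G_N(\tau)\,d\tau$ with $G_N(\tau):=\Phi_N(\theta_N\tau)$, so by dominated convergence the whole theorem reduces to showing that the rescaled survival functions converge, $G_N(\tau)\to e^{-\tau}$.

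The engine of the proof is a comparison estimate measuring how fast the process forgets its initial law. By the Markov property at time $s$, for any probability measure $\mu$ on $\Omega_N$ one has $\bb P_\mu[H_N>s+t] = \sum_\xi \rho^\mu_s(\xi)\,\bb P_\xi[H_N>t]$, where $\rho^\mu_s(\xi)=\bb P_\mu[\eta(s)=\xi,\,H_N>s]$ is a sub-probability measure supported on $A_N^c$. Comparing with $\Phi_N(t)=\sum_\xi \nu_N(\xi)\bb P_\xi[H_N>t]$ and using $0\le \bb P_\xi[H_N>t]\le 1$, together with the fact that $\rho^\mu_s$ differs from the full marginal $\mu P_s$ only by the mass accumulated on $\{H_N\le s\}$, I obtain
\[
\big|\bb P_\mu[H_N>s+t]-\Phi_N(t)\big|\;\le\; 2\,\Vert \mu P_s-\nu_N\Vert_{\rm TV}\;+\;\bb P_\mu[H_N\le s]\;.
\]
The two error terms pull in opposite directions: the mixing term wants the burn-in $s$ large, while the hitting-during-burn-in term wants $s$ small. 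Reconciling them is the crux of the argument, and it is precisely here that the hypothesis $T^{\rm mix}_N\ll r_N^{-1}$ enters; I expect this balance to be the main obstacle, since the law of $\eta(a)$ conditioned on survival $\{H_N>a\}$ is biased away from $\nu_N$ and cannot be compared to it directly.

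To exploit this I would first record the flux bound: by stationarity the expected number of entrances of $A_N$ in a window of length $s$ equals $s\,\nu_N(A_N^c)\,r_N$, whence $\bb P_{\nu_N}[0<H_N\le s]\le s\,r_N$ and $\bb P_{\nu_N}[H_N=0]=\nu_N(A_N)$. Inserting a burn-in window of length $w_N=k_N T^{\rm mix}_N$ after time $a$, applying the comparison estimate conditionally on $\eta(a)$ and then averaging (so that $\eta(a)\sim\nu_N$ by stationarity), and bounding $\max_\eta\Vert P_{w_N}(\eta,\cdot)-\nu_N\Vert_{\rm TV}\le 2^{-k_N}$ by submultiplicativity, I arrive at the uniform approximate memorylessness
\[
\sup_{a,t\ge 0}\big|\Phi_N(a+t)-\Phi_N(a)\,\Phi_N(t)\big|\;\le\;\varepsilon_N,\qquad \varepsilon_N:=2^{-k_N}+\nu_N(A_N)+2\,w_N r_N\;.
\]
Choosing $k_N\to\infty$ with $k_N T^{\rm mix}_N r_N\to 0$, which is possible exactly because $T^{\rm mix}_N r_N\to 0$ (e.g.\ $k_N=\lfloor (T^{\rm mix}_N r_N)^{-1/2}\rfloor$), and using \eqref{05}, forces $\varepsilon_N\to 0$.

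It remains to pass from approximate multiplicativity of $\Phi_N$ to the exponential law. Integrating $\Phi_N$ over $[0,(2r_N)^{-1}]$ and using the flux bound gives $\theta_N\ge \tfrac14 r_N^{-1}(1+o(1))$, so in particular $T^{\rm mix}_N/\theta_N\to 0$. A clean geometric tail then follows from a single mixing step: if $t_N$ is a time with $\Phi_N(t_N)\le \tfrac12$ (so $t_N\le 2\theta_N$) and $s_N:=T^{\rm mix}_N+t_N$, then $\sup_{\eta}\bb P_\eta[H_N>s_N]\le \Phi_N(t_N)+\tfrac14\le \tfrac34$, and conditioning successively yields $\Phi_N(k s_N)\le (3/4)^k$; since $s_N/\theta_N$ is bounded, this provides uniform integrability of the family $G_N$. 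Now $G_N$ is non-increasing with $G_N(0)\to 1$, $\int_0^\infty G_N\,d\tau=1$, and inherits $|G_N(\sigma+\tau)-G_N(\sigma)G_N(\tau)|\le \varepsilon_N$. By Helly's selection theorem any subsequential pointwise limit $G$ is non-increasing and exactly multiplicative, hence $G(\tau)=e^{-c\tau}$; uniform integrability forces $\int_0^\infty G_N\to \int_0^\infty G=1/c$, and combined with $\int_0^\infty G_N=1$ and $G_N(0)\to 1$ this rules out $c\in\{0,\infty\}$ and pins $c=1$. Therefore $G_N(\tau)\to e^{-\tau}$, the Laplace transforms converge to $(1+\lambda)^{-1}$, and $H_N/\bb E_{\nu_N}[H_N]$ converges in distribution to a mean one exponential.
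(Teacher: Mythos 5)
Your argument is correct, and its core estimate is identical to the paper's: your uniform approximate-memorylessness bound, with error $2^{-k_N}+\nu_N(A_N)+2k_N T^{\rm mix}_N r_N$, is precisely Lemma \ref{s01} of the paper (same counting/flux bound \eqref{01}, same burn-in window of length $k_N T^{\rm mix}_N$, same use of the Markov property and of submultiplicativity of the distance to stationarity). The genuine difference is in the second half, i.e.\ in passing from approximate memorylessness to the theorem. The paper works with the quantile $\theta_N$ of \eqref{02}: it iterates Lemma \ref{s01} to prove $H_N/\theta_N\to\mathrm{Exp}(1)$ (Lemma \ref{s03}), and then separately proves $\bb E_{\nu_N}[H_N]/\theta_N\to 1$ (Lemma \ref{s02}), which is the most technical step --- it compares $\theta_N$ with the worst-case quantile $\hat\theta_N=\max_\eta\theta_N(\eta)$, iterates Lemma \ref{s01} $k$ times, and concludes by dominated convergence from the geometric bound $\bb P_\eta[H_N>t\hat\theta_N]\le e^{-[t]}$. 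You instead normalize by the mean from the outset and replace Lemmas \ref{s03} and \ref{s02} by soft analysis: the flux lower bound $\bb E_{\nu_N}[H_N]\gtrsim r_N(A_N^c,A_N)^{-1}$ (the analogue of \eqref{06}), a worst-case geometric tail from a single mixing step (giving uniform integrability of $G_N$), Helly selection to extract a monotone subsequential limit $G$, exact multiplicativity of $G$ forcing $G(\tau)=e^{-c\tau}$, and the automatic constraint $\int_0^\infty G_N\,d\tau=1$ to pin $c=1$. Your route is shorter and avoids the $\hat\theta_N$ bookkeeping entirely; what the paper's route buys in exchange is the quantile asymptotics $\theta_N\sim\hat\theta_N\sim\bb E_{\nu_N}[H_N]$, i.e.\ insensitivity of the hitting-time scale to the initial state, which your compactness argument does not produce. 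One small point you should make explicit: your claimed bound for all $a,t\ge 0$ needs a separate (trivial) check when $t<w_N$, where the burn-in window does not fit inside the second interval; there the bound follows directly from $1-\Phi_N(t)\le\nu_N(A_N)+w_N r_N$ and $0\le\Phi_N(a)-\Phi_N(a+t)\le w_N r_N$.
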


Theorem \ref{s04} follows from Lemmas \ref{s03} and \ref{s02}.  We
prove below in \eqref{06} and Lemma \ref{s02} that
\begin{equation*}
\liminf_N r_N(A_N^c, A_N) \, \bb E_{\nu_N} [H_N]  \;>\; 0\;.
\end{equation*}
In fact, under some assumptions this product converges to $1$.
To state this hypotheses we need to introduce some notation.

For two disjoint subsets $A$, $B$ of the state space $\Omega_N$,
denote by $\Cap (A, B)$ the capacity between $A$ and $B$:
\begin{equation*}
\Cap (A, B) \;=\; \sum_{\eta\in A} \nu_N(\eta) \, \lambda_N(\eta)\, 
\bb P_{\eta} \big[ H_B < H^+_{A} \big]\;.
\end{equation*}
When the set $A$ is a singleton, $A=\{\eta\}$, we write $\Cap(\eta,
B)$ for $\Cap (\{\eta\}, B)$.

Denote by $\{\eta^* (t) : t\ge 0\}$ the stationary Markov process
$\eta(t)$ reversed in time. We shall refer to $\eta^*(t)$ as the
adjoint or the time reversed process. It is well known that
$\eta^*(t)$ is a Markov process on $\Omega_N$ whose generator $L^*_N$
is the adjoint of $L_N$ in $L^2(\nu_N)$.  The jump rates
$R^*_N(\eta,\xi)$, $\eta\not=\xi\in \Omega_N$, of the adjoint process
satisfy the balanced equations
\begin{equation*}
\nu_N(\eta) \, R_N(\eta,\xi) \;=\; \nu_N(\xi) \, R^*_N(\xi,\eta)\;.
\end{equation*}
Denote by $\lambda^*(\eta)=\lambda(\eta)$, $\eta\in \Omega_N$,
$p^*(\eta,\xi)$, $\eta\not=\xi\in \Omega_N$, the holding rates and the
jump probabilities of the time reversed process $\eta^*(t)$.

As above, for each $\eta\in \Omega_N$, denote by $\bb P^*_\eta$ the
probability measure on the path space $D(\bb R_+, \Omega_N)$ induced
by the Markov process $\eta^*(t)$ starting from $\eta$. Expectation
with respect to $\bb P^*_\eta$ is denoted by $\bb E^*_\eta$.

\begin{lemma}
\label{s05}
Assume that there exists a sequence of subsets $B_N$, $B_N\subset
A^c_N$, $\lim_{N\to\infty} \nu_N(B_N) = 1$, such that
\begin{equation}
\label{07}
\lim_{N\to\infty} \sup_{\substack{\eta\in B_N \\ \xi\not\in A_N}} 
\bb P^*_{\xi} [ H_{A_N}  < H_\eta ] \;=\; 0\;,
\quad \lim_{N\to\infty} \sum_{\eta\not\in A_N\cup B_N} 
\frac{\nu_N(\eta)} {\Cap (\eta, A_N)}\;=\;0\;.
\end{equation}
Assume, furthermore, that
\begin{equation}
\label{25}
\limsup_{N\to\infty} r_N(A_N^c, A_N) \;<\;\infty\;.
\end{equation}
Then,
\begin{equation*}
\lim_{N\to\infty} r_N(A_N^c, A_N)\, \bb E_{\nu_N} [H_N]  
\;=\; 1\;.  
\end{equation*}
\end{lemma}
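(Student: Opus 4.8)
The plan is to reduce everything to two exact identities for the adjoint hitting time $w(\xi) := \bb E^*_\xi[H_{A_N}]$ and then to show, with the two hypotheses in \eqref{07}, that $w$ is essentially constant over $A_N^c$. Write $\rho_N := \sum_{\xi\in A_N^c}\nu_N(\xi)\, R_N(\xi, A_N) = \nu_N(A_N^c)\, r_N(A_N^c, A_N)$, so the claim reads $\rho_N\, \bb E_{\nu_N}[H_N]\to\nu_N(A_N^c)$. First I would establish $\bb E_{\nu_N}[H_N] = \sum_{\xi\in A_N^c}\nu_N(\xi)\, w(\xi)$, which follows from the time reversal of the Green function killed at $A_N$: the diagonal is $\nu_N$-symmetric, so the forward and adjoint expected hitting times from $\nu_N$ coincide (the contribution of $A_N$ vanishes since $H_{A_N}=0$ there). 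Second, and this is the key algebraic input, I would prove the Kac-type identity
\begin{equation*}
\sum_{\xi\in A_N^c}\nu_N(\xi)\, R_N(\xi, A_N)\, w(\xi)\;=\;\nu_N(A_N^c)\;.
\end{equation*}
This comes from the adjoint Poisson equation $L_N^* w = -1$ on $A_N^c$, $w=0$ on $A_N$, tested against $\nu_N$ (so $\langle\nu_N, L_N^* w\rangle=0$), and the balance equations $\nu_N(\eta) R_N^*(\eta,\xi) = \nu_N(\xi) R_N(\xi,\eta)$, which turn the boundary term over $A_N$ into the flux $\sum_\xi \nu_N(\xi) R_N(\xi, A_N) w(\xi)$.

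The heart of the proof is to show that $w$ is almost constant on $A_N^c$, equal to $\theta_N := \max_{\eta\in B_N} w(\eta)$ up to lower order. For the lower bound I would use the first condition in \eqref{07}: by the strong Markov property of the adjoint process, for every $\xi\notin A_N$ and $\eta\in B_N$,
\begin{equation*}
w(\xi)\;\ge\; w(\eta)\,\bb P^*_\xi\big[H_\eta < H_{A_N}\big]\;\ge\;(1-\epsilon_N)\, w(\eta)\;,\qquad \epsilon_N:=\sup_{\eta\in B_N,\,\xi\notin A_N}\bb P^*_\xi[H_{A_N}<H_\eta]\to 0\;,
\end{equation*}
so $w\ge(1-\epsilon_N)\theta_N$ on $A_N^c$. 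For the matching upper bound I would decompose $w(\xi)$ at the first visit of the adjoint process to $A_N\cup B_N$: the part accumulated after reaching $B_N$ is at most $\theta_N$, while the time spent before reaching $A_N\cup B_N$ is controlled \emph{uniformly in $\xi$} by the second condition in \eqref{07}. Writing $\tilde G^*$ for the adjoint Green function killed at $A_N\cup B_N$, the first-entrance decomposition gives $\tilde G^*(\xi,\zeta)=\bb P^*_\xi[H_\zeta<H_{A_N\cup B_N}]\,\tilde G^*(\zeta,\zeta)\le\tilde G^*(\zeta,\zeta)\le\nu_N(\zeta)/\Cap(\zeta,A_N)$, using that enlarging the killing set only decreases the Green function and that the diagonal Green function killed at $A_N$ equals $\nu_N(\zeta)/\Cap(\zeta,A_N)$ for both the process and its adjoint. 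Summing,
\begin{equation*}
\bb E^*_\xi\big[H_{A_N\cup B_N}\big]\;=\;\sum_{\zeta\notin A_N\cup B_N}\tilde G^*(\xi,\zeta)\;\le\;\sum_{\zeta\notin A_N\cup B_N}\frac{\nu_N(\zeta)}{\Cap(\zeta,A_N)}\;=:\;\delta_N\;\longrightarrow\;0\;,
\end{equation*}
whence $w(\xi)\le\theta_N+2\delta_N$ for all $\xi\in A_N^c$.

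To conclude I would feed the two-sided bound $(1-\epsilon_N)\theta_N\le w\le\theta_N+2\delta_N$ into the two identities, obtaining $\bb E_{\nu_N}[H_N]=\theta_N\,\nu_N(A_N^c)(1+o(1))$ and $\rho_N\,\theta_N=\nu_N(A_N^c)(1+o(1))$, provided $\delta_N/\theta_N\to0$. This is exactly where hypothesis \eqref{25} enters: $\limsup_N r_N(A_N^c,A_N)<\infty$ forces $\rho_N\le C\nu_N(A_N^c)\le C$, so the Kac identity gives $\theta_N\ge\nu_N(A_N^c)/\rho_N-2\delta_N$, which stays bounded away from $0$; combined with $\delta_N\to0$ this yields $\delta_N/\theta_N\to0$. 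Multiplying the two displays and using $\nu_N(A_N^c)\to1$ gives $\rho_N\,\bb E_{\nu_N}[H_N]\to1$, that is $r_N(A_N^c,A_N)\,\bb E_{\nu_N}[H_N]\to1$.

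I expect the main obstacle to be the uniform upper bound on $w$, namely that the adjoint hitting time of $A_N$ from configurations near $A_N$ is no larger than the bulk value $\theta_N$; the domination of off-diagonal by diagonal Green functions together with the identification $\nu_N(\zeta)/\Cap(\zeta,A_N)=G_{A_N^c}(\zeta,\zeta)$ is the mechanism that makes the second condition in \eqref{07} do this job, and keeping the normalisations so that $\epsilon_N$ and $\delta_N/\theta_N$ enter only through $o(1)$ corrections is the delicate bookkeeping.
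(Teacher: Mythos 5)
Your proof is correct, but it is organized quite differently from the paper's. The paper works pointwise in the starting configuration: it invokes the formula $\bb E_{\eta} [H_N] = \sum_{\xi\not\in A_N} \nu_N(\xi) \, \bb P^*_{\xi} [ H_\eta < H_{A_N}]\,/\,\Cap (\eta, A_N)$ from \cite[Proposition A.2]{bl7}, sandwiches the capacity $\Cap(\eta,A_N)$ between $(1-\epsilon_N)\,\nu_N(A_N^c)\, r_N(A_N^c,A_N)$ and $\nu_N(A_N^c)\, r_N(A_N^c,A_N)$ for $\eta\in B_N$ via the flux representation of the capacity from \cite{gl2} together with the first condition in \eqref{07}, and uses the second condition in \eqref{07} only to discard the contribution of $\eta\not\in A_N\cup B_N$ to the sum $\sum_\eta \nu_N(\eta)\,\bb E_\eta[H_N]$. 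You instead work with the adjoint hitting-time function $w(\xi)=\bb E^*_\xi[H_{A_N}]$ and derive everything from two exact identities --- the reversal identity $\bb E_{\nu_N}[H_N]=\sum_\xi \nu_N(\xi)\, w(\xi)$ and the Kac-type flux identity $\sum_\xi \nu_N(\xi)\, R_N(\xi,A_N)\, w(\xi)=\nu_N(A_N^c)$ --- plus the flatness of $w$ on $A_N^c$; in particular you use the second condition in \eqref{07} in a different (and rather elegant) way, as a uniform-in-$\xi$ bound on $\bb E^*_\xi[H_{A_N\cup B_N}]$ through the first-entrance decomposition of the killed Green function and the identity $G^*_{A_N}(\zeta,\zeta)=\nu_N(\zeta)/\Cap(\zeta,A_N)$ (which relies, as does the paper, on $\Cap=\Cap^*$). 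The bookkeeping at the end is also sound: hypothesis \eqref{25} enters exactly where you say, to keep $\theta_N$ bounded away from zero so that $\delta_N/\theta_N\to 0$ and $\rho_N\delta_N\to 0$. What your route buys is self-containedness --- only the Poisson equation, the strong Markov property, Green-function duality and capacity symmetry are needed, instead of the two cited results \cite[Proposition A.2]{bl7} and the capacity formulae of \cite{gl2} --- and a cleaner probabilistic interpretation of \eqref{07} as forcing the adjoint expected hitting time to be constant; what the paper's route buys is brevity, since the two cited identities do in one line what your Kac identity and flatness argument do in several.
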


\begin{proof}
Fix $\eta\not\in A_N$.  By definition of the capacity, by equation
(2.4) and Lemma 2.3 in \cite{gl2}, and by the Markov property,
\begin{equation*}
\begin{split}
\Cap (\eta, A_N) \;&=\; \Cap^* (\eta, A_N) \;=\; 
\sum_{\substack{\xi\in A_N \\ \zeta\not\in
    A_N}} \nu_N(\xi)\, R^*_N(\xi,\zeta) \, \bb P^*_{\zeta}[H_\eta <
H_{A_N}] \\
&=\; \sum_{\substack{\xi\in A_N \\ \zeta\not\in
    A_N}} \nu_N(\zeta)\, R_N(\zeta, \xi) \, \bb P^*_{\zeta}[H_\eta <
H_{A_N}] \;. 
\end{split}
\end{equation*}
This sum is bounded above by $\nu_N(A_N^c) r_N(A_N^c, A_N) \le
r_N(A_N^c, A_N)$. On the other hand, if $\eta$ belongs to $B_N$, by
assumption \eqref{07}, the sum is bounded below by $(1-\epsilon_N)
\nu_N(A_N^c) r_N(A_N^c, A_N) \ge (1-\epsilon_N) r_N(A_N^c, A_N)$,
where $\epsilon_N$ is a sequence which vanishes as $N\uparrow\infty$,
and which may change from line to line.

By \cite[Proposition A.2]{bl7}, 
\begin{equation*}
\bb E_{\nu_N} [H_N] 
\;=\; \sum_{\eta\not\in A_N} \nu_N(\eta) \, \bb E_{\eta} [H_N]
\;=\; \sum_{\eta\not\in A_N} \nu_N(\eta) \,
\frac{\sum_{\xi\not\in A_N} \nu_N(\xi) \, \bb P^*_{\xi} [ H_\eta < H_{A_N}]}
{\Cap (\eta, A_N)}\;\cdot
\end{equation*}

By the lower bound for the capacity obtained in the beginning of the
proof and by \eqref{07}, this expression is bounded above by
\begin{equation*}
\sum_{\eta\in B_N} \frac{\nu_N(\eta)} {\Cap (\eta, A_N)}
\;+\; \sum_{\eta\not\in A_N\cup B_N} \frac{\nu_N(\eta)} {\Cap (\eta,
  A_N)} \;\le\; (1+\epsilon_N) \, r_N(A_N^c, A_N)^{-1}
\;+\; \epsilon_N\;.
\end{equation*}
In view of \eqref{25}, this proves that
\begin{equation*}
\limsup_{N\to\infty} r_N(A_N^c, A_N)\, \bb E_{\nu_N} [H_N]  \;\le\; 1\;.
\end{equation*}

By \eqref{07} and by the upper bound for the capacity obtained in the
beginning of the proof, $\bb E_{\nu_N} [H_N]$ is bounded below by
\begin{equation*}
\begin{split}
& \sum_{\eta\in B_N} \nu_N(\eta)
\frac{\sum_{\xi\not\in A_N} \nu_N(\xi) \, \bb P^*_{\xi} 
[ H_\eta < H_{A_N}]} {\Cap (\eta, A_N)} \\
&\quad \ge\;
(1-\epsilon_N) \sum_{\eta\in B_N} 
\frac{\nu_N(\eta)} {\Cap (\eta, A_N)}\;\ge\;
(1-\epsilon_N) \,  r_N(A_N^c, A_N)^{-1} \;.  
\end{split}
\end{equation*}
This concludes the proof of the lemma.
\end{proof}

Denote by $N_t$, $t\ge 0$, the number of jumps from $A_N^c$ to
$A_N$ in the time interval $[0,t]$. $N_t$ is a Poisson process and
$M_t$, defined by
\begin{equation*}
M_t\;=\; N_t \;-\; \int_0^t R_N(\eta(s), A_N) \, \mb 1\{\eta(s)
\not\in A_N\} \, ds\;,
\end{equation*}
is a martingale. In particular,
\begin{equation*}
\bb E_{\nu_N} [N_t] \;=\; t \, \nu_N
(A_N^c)\, r_N(A_N^c, A_N) \;.
\end{equation*}

Note that $\{H_N\le t\} = \{\eta(0)\in A_N\} \cup \{N_t \ge 1\}$.
Define
\begin{equation*}
X_t \;=\; \mb 1 \{\eta(0)\in A_N\} \;+\; N_t
\end{equation*}
so that $\{H_N\le t\} = \{X_t \ge 1\}$, and 
\begin{equation}
\label{01}
\begin{split}
\bb P_{\nu_N} [H_N \le t] \; &=\;
\bb P_{\nu_N} [X_t \ge 1] \;\le\;
\bb E_{\nu_N} [X_t]  \\
& \le\; \nu_N (A_N) \;+\; t\, \nu_N
(A_N^c)\, r_N(A_N^c, A_N)\;.
\end{split}
\end{equation} 

\begin{lemma}
\label{s01}
Assume that $T^{\rm mix}_N \ll r_N(A_N^c, A_N)^{-1}$. Let $\gamma_N$,
$\sigma_N$ be two sequences such that $T^{\rm mix}_N \ll \sigma_N \ll
\min\{\gamma_N, r_N(A_N^c, A_N)^{-1}\}$. Then, for every $t$, $s> 0$,
\begin{equation*}
\begin{split}
& \Big|\, \bb P_{\nu_N} \big[H_N> (t+s) \gamma_N \big] \,-\, 
\bb P_{\nu_N} \big[H_N> s \gamma_N \big] \, 
\bb P_{\nu_N} \big[H_N> t \gamma_N \big] \, \Big| \\
&\qquad \;\le\; 2\, \nu_N (A_N) \;+\; 
2\, \sigma_N \, r_N(A_N^c, A_N) \;+\;
(1/2)^{\sigma_N/T^{\rm mix}_N}\;.
\end{split}
\end{equation*}
\end{lemma}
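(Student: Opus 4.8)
The plan is to establish an approximate independence of the events $\{H_N > s\gamma_N\}$ and $\{H_N > t\gamma_N\}$ under $\nu_N$, using the Markov property together with the small mixing time. The key observation is that $\gamma_N$ is chosen so large that $\sigma_N/\gamma_N \to 0$; thus the intermediate time scale $\sigma_N$ is negligible on the scale $\gamma_N$, yet still large compared to $T^{\mathrm{mix}}_N$. This separation of scales is what allows us to insert a ``relaxation window'' of length $\sigma_N$ during which the process forgets its past and returns close to $\nu_N$.

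First I would write, via the Markov property applied at time $t\gamma_N + \sigma_N$,
\begin{equation*}
\bb P_{\nu_N}\big[H_N > (t+s)\gamma_N\big]
\;=\; \bb E_{\nu_N}\Big[\mb 1\{H_N > t\gamma_N+\sigma_N\}\,
\bb P_{\eta(t\gamma_N+\sigma_N)}\big[H_N > (t+s)\gamma_N - t\gamma_N - \sigma_N\big]\Big]\;.
\end{equation*}
Since $(t+s)\gamma_N - t\gamma_N - \sigma_N = s\gamma_N - \sigma_N$, the inner probability is $\bb P_{\eta}[H_N > s\gamma_N - \sigma_N]$ evaluated at $\eta = \eta(t\gamma_N+\sigma_N)$. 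The next step is to replace the law of $\eta(t\gamma_N+\sigma_N)$, conditioned on surviving up to time $t\gamma_N$, by the stationary measure $\nu_N$. Because $\sigma_N \gg T^{\mathrm{mix}}_N$, running the process for the additional time $\sigma_N$ after $t\gamma_N$ brings its distribution within total variation distance $(1/2)^{\sigma_N/T^{\mathrm{mix}}_N}$ of $\nu_N$; this accounts for the last term in the bound. I would then bound the difference between $\bb P_{\nu_N}[H_N > s\gamma_N - \sigma_N]$ and $\bb P_{\nu_N}[H_N > s\gamma_N]$ using \eqref{01}: the event $\{s\gamma_N - \sigma_N < H_N \le s\gamma_N\}$ has $\nu_N$-probability at most $\nu_N(A_N) + \sigma_N\, r_N(A_N^c, A_N)$, which together with the analogous term coming from replacing $\{H_N > t\gamma_N + \sigma_N\}$ by $\{H_N > t\gamma_N\}$ produces the terms $2\,\nu_N(A_N) + 2\,\sigma_N\, r_N(A_N^c, A_N)$.

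The main obstacle is the conditioning: after restricting to the survival event $\{H_N > t\gamma_N+\sigma_N\}$, the conditional law of $\eta(t\gamma_N+\sigma_N)$ is \emph{not} $\nu_N$, and one cannot directly invoke the mixing bound, which controls $\Vert P_{\sigma_N}(\eta, \cdot) - \nu_N\Vert_{\mathrm{TV}}$ pointwise rather than for a conditioned distribution. The way around this is to apply the mixing estimate \emph{before} imposing survival on the window $[t\gamma_N, t\gamma_N+\sigma_N]$: one conditions only on the state $\eta(t\gamma_N)$, uses the Markov property to run a fresh copy for time $\sigma_N$, and controls the total-variation drift from the worst-case initial state $\eta(t\gamma_N)$ to $\nu_N$. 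Care is needed to ensure the errors incurred by decoupling at $t\gamma_N$ versus $t\gamma_N + \sigma_N$ are each charged to one of the three terms in the final inequality, and that no term is double counted. Assembling these three sources of error—the mixing gap, the two relaxation windows of length $\sigma_N$, and the initial-configuration term $\nu_N(A_N)$—and verifying the inequality holds for all $t, s > 0$ simultaneously completes the proof.
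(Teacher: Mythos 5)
Your proposal is correct and is essentially the paper's own argument: the paper writes it in terms of the counting process $X_t$ of entries into $A_N$, but performs exactly your steps---dropping the survival requirement on a window of length $\sigma_N$ at cost $\nu_N(A_N)+\sigma_N\, r_N(A_N^c,A_N)$ (twice, via \eqref{01} and stationarity), applying the Markov property at time $t\gamma_N$ rather than $t\gamma_N+\sigma_N$, and then using the mixing bound uniformly over the state $\eta(t\gamma_N)$ to replace the law of $\eta(t\gamma_N+\sigma_N)$ by $\nu_N$ at cost $(1/2)^{\sigma_N/T^{\rm mix}_N}$. In particular, your resolution of the conditioning obstacle (mixing applied before imposing survival on the window) is precisely the paper's.
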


\begin{proof}
In view of the definition of $X_t$, we have to estimate the difference 
\begin{equation*}
\bb P_{\nu_N} \big[ X_{(t+s) \gamma_N}=0 \big] \,-\, 
\bb P_{\nu_N} \big[ X_{s \gamma_N}=0 \big] \, 
\bb P_{\nu_N} \big[ X_{t \gamma_N}=0 \big] \;.
\end{equation*}

Let $\sigma_N$ be a sequence such that $T^{\rm mix}_N \ll \sigma_N
\ll r_N(A_N^c, A_N)^{-1}$. Clearly,
\begin{equation*}
\Big|\, \bb P_{\nu_N} \big[ X_{s \gamma_N}=0 \big] \,-\, 
\bb P_{\nu_N} \big[ X_{s \gamma_N} - X_{\sigma_N} =0 
\big]  \, \Big| \;\le \; 
\bb P_{\nu_N} \big[ X_{\sigma_N} \ge 1 \big] 
\;,
\end{equation*}
and, by \eqref{01}, this last probability is bounded by $\nu_N (A_N)
\;+\; \sigma_N \, r_N(A_N^c, A_N)$.  By stationarity, a similar bound
holds for the absolute value of the difference
\begin{equation*}
\bb P_{\nu_N} \big[ X_{(t+s) \gamma_N}=0 \big] \,-\,
\bb P_{\nu_N} \big[ X_{t \gamma_N}=0 \,,\,
X_{(t+s) \gamma_N} - X_{t \gamma_N + \sigma_N} =0  \big] \;.
\end{equation*}

It remains to estimate the absolute value of the difference
\begin{equation*}
\begin{split}
& 
\bb P_{\nu_N} \big[ X_{t \gamma_N}=0 \,,\,
X_{(t+s) \gamma_N} - X_{t \gamma_N + \sigma_N} =0  \big] \\
&\qquad \qquad \qquad \,-\, 
\bb P_{\nu_N} \big[ X_{s \gamma_N} - X_{\sigma_N} =0 \big] \, 
\bb P_{\nu_N} \big[ X_{t \gamma_N}=0 \big] \;.
\end{split}
\end{equation*}
By the Markov property, this expression is equal to
\begin{equation*}
\bb E_{\nu_N} \Big[ \mb 1\{ X_{t \gamma_N}=0\} \,
\Big\{ \bb P_{\eta(t \gamma_N)} \big[  X_{s \gamma_N} - X_{\sigma_N}
=0  \big] \,-\, 
\bb P_{\nu_N} \big[ X_{s \gamma_N} - X_{\sigma_N} =0
\big] \Big\}\, \Big]\;.
\end{equation*}
This expectation is absolutely bounded by
\begin{equation*}
\sup_{\eta\in \Omega_N}
\Big |\, \bb E_{\eta} \Big[  \,\bb P_{\eta(\sigma_N)} \big[ X_{s
  \gamma_N - \sigma_N} =0  \big]\, \Big] \,-\, 
\bb P_{\nu_N} \big[ X_{s\gamma_N - \sigma_N} =0
\big] \, \Big| \;\le\; (1/2)^{\sigma_N/T^{\rm mix}_N}\;.
\end{equation*}
where we used the definition of the mixing time in the last
inequality. This concludes the proof of the lemma.
\end{proof}

Let $\theta_N$ be given by
\begin{equation}
\label{02}
\theta_N \;=\; \inf \Big\{t>0 \,:\,
\bb P_{\nu_N} \big[ H_N> t \big] 
\;<\; e^{-1} \Big\}\;.
\end{equation}
Note that $\bb P_{\nu_N} \big[ H_N> \theta_N \big] 
\,\le\, e^{-1}$. Hence, by \eqref{01},
\begin{equation*}
1\,-\, e^{-1} \;\le\; \bb P_{\nu_N} \big[ H_N \le
\theta_N \big] \;\le\; \nu_N (A_N) \;+\; 
\theta_N\, \nu_N (A_N^c)\, r_N(A_N^c, A_N)\;.
\end{equation*}
Since $\nu_N (A_N)$ vanishes, we deduce from this
inequality that
\begin{equation}
\label{06}
\liminf_N \theta_N\, r_N(A_N^c, A_N) \;>\; 0\;.
\end{equation}
In particular, $\theta_N \gg T^{\rm mix}_N$. 

\begin{lemma}
\label{s03}
Assume that $T^{\rm mix}_N \ll r_N(A_N^c, A_N)^{-1}$.
Let $\theta_N$ be the sequence defined by \eqref{02}. Under
$\nu_N$, the sequence of random variables
$H_N/\theta_N$ converges in distribution to a mean one exponential
random variable.
\end{lemma}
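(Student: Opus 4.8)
The plan is to prove that the survival function $f_N(t) := \bb P_{\nu_N}[H_N > t\,\theta_N]$ converges pointwise to $e^{-t}$ for every $t>0$; since the limiting distribution function $1-e^{-t}$ is continuous, this is precisely the asserted convergence in distribution of $H_N/\theta_N$ to a mean one exponential. The engine will be the approximate lack-of-memory estimate of Lemma \ref{s01}, applied with the choice $\gamma_N = \theta_N$.

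First I would record two elementary properties of $f_N$. Because the law of $H_N$ carries no atom on $(0,\infty)$ (the hitting time of a subset by a finite-state continuous-time chain has a density there), the map $t\mapsto \bb P_{\nu_N}[H_N>t]$ is continuous on $(0,\infty)$; combined with the definition \eqref{02} of $\theta_N$ and the fact that $\nu_N(A_N)\to 0$, this gives $f_N(1) = \bb P_{\nu_N}[H_N>\theta_N] = e^{-1}$ for $N$ large. Moreover $f_N$ is non-increasing with $0\le f_N\le 1$. Next I must exhibit an admissible scale $\sigma_N$. By the remark following \eqref{06} one has $\theta_N \gg T^{\rm mix}_N$, and by hypothesis $r_N(A_N^c,A_N)^{-1}\gg T^{\rm mix}_N$; hence an intermediate sequence (a geometric mean will do) satisfies $T^{\rm mix}_N \ll \sigma_N \ll \min\{\theta_N,\, r_N(A_N^c,A_N)^{-1}\}$. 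With this $\sigma_N$, Lemma \ref{s01} yields, for all $s,t>0$,
\begin{equation*}
\big| f_N(t+s) - f_N(t)\,f_N(s) \big| \;\le\; \delta_N \;:=\; 2\,\nu_N(A_N) \,+\, 2\,\sigma_N\, r_N(A_N^c,A_N) \,+\, (1/2)^{\sigma_N/T^{\rm mix}_N}\;,
\end{equation*}
and each term tends to $0$: the first by \eqref{05}, the second since $\sigma_N\ll r_N(A_N^c,A_N)^{-1}$, the third since $\sigma_N\gg T^{\rm mix}_N$. Thus $f_N$ is asymptotically multiplicative with $f_N(1)=e^{-1}$.

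The core step is to convert approximate multiplicativity into the exponential limit along the rationals. A one-line induction on the estimate above gives, for every positive integer $n$, the bound $|f_N(n)-f_N(1)^n|\le (n-1)\,\delta_N$; crucially the accumulation is linear, not exponential, because $0\le f_N(1)\le 1$. Since $f_N(1)=e^{-1}$ this yields $f_N(n)\to e^{-n}$ for each fixed $n$. Applying the same induction to $k\mapsto f_N(k/m)$ gives $|f_N(1)-f_N(1/m)^m|\le (m-1)\,\delta_N$, whence $f_N(1/m)^m\to e^{-1}$ and, taking $m$-th roots, $f_N(1/m)\to e^{-1/m}$; one further iteration gives $f_N(k/m)\to e^{-k/m}$. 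Hence $f_N(q)\to e^{-q}$ for every positive rational $q$. Finally, monotonicity of $f_N$ upgrades this to all $t>0$: sandwiching $t$ between rationals $q_1<t<q_2$ gives $e^{-q_2}\le \liminf_N f_N(t)\le \limsup_N f_N(t)\le e^{-q_1}$, and letting $q_1\uparrow t$ and $q_2\downarrow t$ forces $f_N(t)\to e^{-t}$, completing the proof.

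I expect the main difficulty to be essentially bookkeeping rather than conceptual: one must verify that $\delta_N$ genuinely vanishes for the chosen $\sigma_N$, which relies on the two separations $\theta_N\gg T^{\rm mix}_N$ and $r_N(A_N^c,A_N)^{-1}\gg T^{\rm mix}_N$ holding simultaneously so that an intermediate scale exists, and one must confirm that the multipliers of $\delta_N$ in the iteration depend only on the fixed rational $q$ and not on $N$, so that the limit $\delta_N\to 0$ is not overwhelmed. The sole genuinely analytic input is the atom-free continuity of the survival function needed for the normalization $f_N(1)=e^{-1}$, which is standard for hitting times of finite Markov chains.
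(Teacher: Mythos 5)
Your proposal is correct and follows essentially the same route as the paper: the paper's (very terse) proof likewise derives $T^{\rm mix}_N \ll \theta_N$ from \eqref{06}, applies Lemma \ref{s01} with $\gamma_N = \theta_N$ and an intermediate scale $\sigma_N$, and concludes that $\bb P_{\nu_N}[H_N/\theta_N > t] \to e^{-t}$. Your write-up simply makes explicit the bookkeeping the paper leaves implicit — the exact normalization $f_N(1)=e^{-1}$ via continuity of the survival function, and the induction along integers and rationals followed by the monotonicity sandwich — all of which is carried out correctly.
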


\begin{proof} 
Since $T^{\rm mix}_N \ll r_N(A_N^c, A_N)^{-1}$, by \eqref{06}, $T^{\rm
  mix}_N \ll \theta_N$.  By Lemma \ref{s01} with $\gamma_N = \theta_N$
and some sequence $\sigma_N$, $T^{\rm mix}_N \ll \sigma_N \ll
r_N(A_N^c, A_N)^{-1}$, we have that
\begin{equation*}
\lim_{N\to\infty} \bb P_{\nu_N} \Big[\,
\frac{H_N}{\theta_N} > t \,\Big] \;=\; e^{-t}\;, \quad t>0\;.
\end{equation*}
\end{proof}

\begin{lemma}
\label{s02}
The sequence $\theta_N$ introduced in \eqref{02} satisfies
\begin{equation*}
\lim_{N\to\infty} \frac{\bb E_{\nu_N} [\,
  H_N\,]}{\theta_N} \;=\; 1\;.
\end{equation*}
\end{lemma}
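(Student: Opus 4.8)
The goal is to show that the normalizing sequence $\theta_N$ defined by the quantile condition \eqref{02} is asymptotically equal to the mean $\bb E_{\nu_N}[H_N]$. The plan is to exploit the asymptotic exponentiality of $H_N/\theta_N$ already established in Lemma \ref{s03}: if $H_N/\theta_N$ converges in distribution to a mean-one exponential, then one naturally expects $\bb E_{\nu_N}[H_N/\theta_N]\to 1$ as well, provided we can justify the interchange of limit and expectation. Thus the whole argument reduces to a uniform integrability statement for the family $\{H_N/\theta_N\}_N$ under $\nu_N$.

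First I would write $\bb E_{\nu_N}[H_N]/\theta_N = \int_0^\infty \bb P_{\nu_N}[H_N/\theta_N > u]\, du$ using the tail-integral formula for the expectation of a nonnegative random variable. By Lemma \ref{s03} the integrand converges pointwise to $e^{-u}$ for each $u>0$, and $\int_0^\infty e^{-u}\,du = 1$, so the conclusion follows once we produce an integrable dominating bound for $\bb P_{\nu_N}[H_N/\theta_N > u]$ uniform in $N$, or equivalently establish uniform integrability directly. The key step is therefore a uniform upper tail estimate of the form $\bb P_{\nu_N}[H_N > u\,\theta_N] \le C\, e^{-c u}$ for constants $c,C>0$ independent of $N$ and all large $N$.

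The natural way to obtain such a geometric tail bound is to iterate Lemma \ref{s01}. Taking $\gamma_N=\theta_N$ and $t=s$ and splitting a long time interval of length $u\theta_N$ into $\lfloor u\rfloor$ blocks of length $\theta_N$, the approximate independence across blocks supplied by Lemma \ref{s01} gives $\bb P_{\nu_N}[H_N>u\theta_N]\lesssim \big(\bb P_{\nu_N}[H_N>\theta_N]\big)^{\lfloor u\rfloor} + (\text{error})$. By the definition \eqref{02} of $\theta_N$ we have $\bb P_{\nu_N}[H_N>\theta_N]\le e^{-1}<1$, which furnishes the exponential factor $e^{-\lfloor u\rfloor}$; meanwhile the accumulated error is controlled because the error per step in Lemma \ref{s01}, namely $2\nu_N(A_N)+2\sigma_N r_N(A_N^c,A_N)+(1/2)^{\sigma_N/T^{\rm mix}_N}$, tends to zero and, after choosing $\sigma_N$ appropriately between $T^{\rm mix}_N$ and $r_N(A_N^c,A_N)^{-1}$, stays summable against the geometric weights. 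Since $\theta_N\gg T^{\rm mix}_N$ by \eqref{06}, there is enough room to apply Lemma \ref{s01} at every block.

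The main obstacle I anticipate is the bookkeeping in the iteration: Lemma \ref{s01} as stated controls a single factorization $\bb P_{\nu_N}[H_N>(t+s)\theta_N]$ against the product of two tails, so iterating it $k=\lfloor u\rfloor$ times produces a cumulative error of order $k$ times the per-step error, and one must check that $u\,e^{-cu}\cdot(\text{per-step error})$ remains integrable in $u$ and vanishes as $N\to\infty$. Concretely, I would fix a large truncation level $U$, handle $\int_0^U$ by dominated convergence using the pointwise limit of Lemma \ref{s03} together with the trivial bound $\bb P_{\nu_N}[\,\cdot\,]\le 1$, and handle the tail $\int_U^\infty$ by the geometric estimate above, showing it is small uniformly in $N$ once $U$ is large. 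Combining $\limsup_N \bb E_{\nu_N}[H_N]/\theta_N\le 1$ with the matching lower bound from Fatou's lemma applied to the tail integral then yields the desired equality.
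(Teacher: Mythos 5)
Your skeleton (tail-integral formula, pointwise convergence from Lemma \ref{s03}, then an interchange of limit and integral) matches the final part of the paper's proof, and your lower bound via Fatou is fine. The gap is in the key step: the tail bound you extract from Lemma \ref{s01} is not geometric, and it cannot dominate the integral over an infinite horizon. Writing $a_k = \bb P_{\nu_N}[H_N > k\theta_N]$ and $R_N$ for the right-hand side of Lemma \ref{s01}, the iteration gives $a_{k+1}\le e^{-1}a_k + R_N$, hence
\begin{equation*}
a_k \;\le\; e^{-k} \;+\; \frac{R_N}{1-e^{-1}}\;.
\end{equation*}
(Your worry that the error grows like $kR_N$ is not the real issue: the errors are weighted by geometric factors and stay of order $R_N$.) The problem is that this additive term does not decay in $k$: the bound plateaus at level $cR_N>0$ for each fixed $N$. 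Consequently your proposed dominating function $e^{-\lfloor u\rfloor}+cR_N$ has infinite integral on $[0,\infty)$, and the step ``handle the tail $\int_U^\infty$ by the geometric estimate, showing it is small uniformly in $N$ once $U$ is large'' fails precisely where it is needed: for any fixed $U$, the estimate only gives $\int_U^\infty \bb P_{\nu_N}[H_N/\theta_N>u]\,du \le \sum_{k\ge U}\bigl(e^{-k}+cR_N\bigr)=\infty$. No amount of re-blocking fixes this, because every application of Lemma \ref{s01} re-introduces the additive error $R_N$, so bounds obtained from it alone can never push the tail probability below order $R_N$.

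The missing ingredient is a uniform-over-starting-states estimate, and this is exactly the idea on which the paper's proof is built. The paper introduces the worst-case quantile $\hat\theta_N=\max_{\eta\in\Omega_N}\theta_N(\eta)$, where $\theta_N(\eta)$ is the analogue of \eqref{02} for the process started at $\eta$. Since $\bb P_\xi[H_N>\hat\theta_N]\le e^{-1}$ for \emph{every} state $\xi$, the Markov property yields the clean bound $\bb P_\eta[H_N> t\,\hat\theta_N]\le e^{-[t]}$ with no additive error, and dominated convergence then gives $\bb E_{\nu_N}[H_N]/\hat\theta_N\to 1$. The price is that one must prove $\hat\theta_N/\theta_N\to 1$, which the paper does by combining the mixing time (to get $\hat\theta_N\le \theta_N(1+\epsilon)+L\,T^{\rm mix}_N$) with an iteration of Lemma \ref{s01} (to get $\theta_N(1+\epsilon)\le (1+k^{-1})\theta_N$); here, unlike in your tail estimate, the iteration is used only finitely many times for each $\epsilon$, so the additive error is harmless. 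Your argument could be repaired along the same lines --- e.g.\ use the mixing time to show $\sup_\eta \bb P_\eta[H_N>\theta_N+L\,T^{\rm mix}_N]\le e^{-1}+2^{-L}<1$, deduce $\max_\eta \bb E_\eta[H_N]\le C\theta_N$, and use this to control the far tail --- but that uniformization over starting states is the essential missing idea, not a bookkeeping detail.
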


\begin{proof}
Let
\begin{equation*}
\theta_N(\eta) \;:=\; \inf\big\{ t>0 \,:\, \bb P_{\eta} [\,
  H_N > t\,] \,\le\, e^{-1} \big\}\;,\quad \eta\in \Omega_N\;,
\end{equation*}
and let $\hat \theta_N = \max_{\eta\in \Omega_N} \theta_N(\eta)$. We
first claim that
\begin{equation}
\label{04}
\lim_{N\to\infty} \theta_N/\hat\theta_N \;=\; 1\;.
\end{equation}
It is clear that $\theta_N \le \hat\theta_N$. Indeed, if
$t>\hat\theta_N$, $t>\theta_N(\eta)$ for all $\eta\in
\Omega_N$, so that
\begin{equation*}
\bb P_{\nu_N} [\,H_N > t \,] \;=\;
\sum_{\eta\in \Omega_N} \nu_N (\eta)\,
\bb P_{\eta} [\, H_N > t\,] \,\le\, e^{-1}\;.
\end{equation*}
Hence, $\theta_N \le t$ and $\theta_N \le \hat\theta_N$.

To prove the converse inequality, let $\theta_N(a)$, $a>0$, be given
by 
\begin{equation*}
\theta_N(a) \;:=\; \inf\big\{ t>0 \,:\, \bb P_{\nu_N} [\,
  H_N > t\,] \,\le\, e^{-a} \big\}\;. 
\end{equation*}
For any $\eta\in\Omega_N$, $\epsilon>0$, $L\ge 1$,
\begin{equation*}
\bb P_{\eta} \big[\, H_N > \theta_N(1+\epsilon) + L T^{\rm mix}_N\,\big]
\,\le\, \bb E_{\eta} \Big[\, 
\bb P_{\eta (L T^{\rm mix}_N)} \big[\, H_N > \theta_N(1+\epsilon) \,\big]
\,\Big]
\end{equation*} 
By definition of the mixing time and of $\theta_N(1+\epsilon)$, the
last expectation is bounded by
\begin{equation*}
2^{-L}\;+\; \bb P_{\nu_N} \big[\, H_N > 
\theta_N(1+\epsilon) \,\big] \;\le\; 2^{-L}\;+\;
e^{-(1+\epsilon)}\;\le\; e^{-1} 
\end{equation*}
provided $2^{-L} \le e^{-1} [ 1 - e^{-\epsilon}]$. Hence,
$\theta_N(\eta) \le \theta_N(1+\epsilon) + L T^{\rm mix}_N$ for all
$\eta\in\Omega_N$ so that $\hat \theta_N \le \theta_N(1+\epsilon) + L
T^{\rm mix}_N$.

Denote by $R_N$ the right hand side of the inequality appearing in the
statement of Lemma \ref{s01} with $\gamma_N$ replaced by
$\theta_N$. Iterating $k-1$ times this estimate, we obtain that
\begin{equation*}
\bb P_{\nu_N} \big[\, H_N > \theta_N/k \,\big] \;\le\;
\Big(\bb P_{\nu_N} \big[\, H_N > \theta_N \,\big] +
k \, R_N\Big)^{1/k}\;.
\end{equation*}
Applying once more Lemma \ref{s01}, we get that
\begin{equation*}
\bb P_{\nu_N} \big[\, H_N > (k+1)\theta_N/k \,\big] \;\le\;
\bb P_{\nu_N} \big[\, H_N > \theta_N \,\big] \,
\bb P_{\nu_N} \big[\, H_N > \theta_N/k \,\big] +
R_N\;,
\end{equation*}
so that
\begin{equation*}
\bb P_{\nu_N} \big[\, H_N > (k+1)\theta_N/k \,\big]
\;\le\; e^{-1} \big( e^{-1} + k \, R_N\big)^{1/k} \;+\; R_N\;.
\end{equation*}
Since $R_N$ vanishes, if $k> \epsilon^{-1}$ this expression is bounded
by $e^{-(1+\epsilon)}$ for $N$ sufficiently large. Therefore,
$\theta_N(1+\epsilon) \le (1+ k^{-1}) \theta_N$ for all $N$ large
enough if $k> \epsilon^{-1}$. Taking $k= [\epsilon^{-1}] +1$, where
$[a]$ stands for the integer part of $a$, we conclude from the
previous two estimates that for $N$ large enough
\begin{equation*}
\hat \theta_N \;\le\; \Big(1+ \frac 1{[\epsilon^{-1}] +1} \Big) 
\, \theta_N  \;+\; L \, T^{\rm mix}_N
\end{equation*}
provided $2^{-L} \le e^{-1} [ 1 - e^{-\epsilon}]$. This proves that
for every $\epsilon >0$, $\limsup_N (\hat \theta_N/\theta_N)\le 1+
([\epsilon^{-1}] +1)^{-1}$, i.e., that $\limsup_N (\hat
\theta_N/\theta_N)\le 1$, proving claim \eqref{04}.

It follows from Lemma \ref{s03} and \eqref{04} that $H_N/\hat
\theta_N$ converges in distribution to a mean one exponential random
variable.  We claim that
\begin{equation}
\label{03}
\lim_{N\to\infty} \frac{\bb E_{\nu_N} [\,
  H_N\,]}{\hat\theta_N} \;=\; 1\;.
\end{equation}
To prove \eqref{03}, we change variables to obtain that
\begin{equation*}
\hat\theta_N^{-1} \bb E_{\nu_N} [\,H_N\,] \;=\;
\hat\theta_N^{-1} \int_0^\infty  \bb P_{\nu_N} 
[\,H_N > t \,]\, dt \;=\;
\int_0^\infty  \bb P_{\nu_N} 
[\,H_N / \hat\theta_N  > t \,]\, dt \;.
\end{equation*}
It remains to obtain a bound to apply the dominated convergence
theorem. By definition of $\hat\theta_N$, $\bb P_{\eta} [\,H_N >
\hat\theta_N\,] \le e^{-1}$ for all $\eta\in \Omega_N$. By the Markov
property, we obtain that $\bb P_{\eta} [\,H_N > t \hat\theta_N\,] \le
\bb P_{\eta} [\,H_N > [t] \hat\theta_N\,] \le e^{-[t]}$ so that $\bb
P_{\nu_N} [\,H_N / \hat\theta_N > t \,]\le e^{-[t]}$.
\end{proof}

\begin{corollary}
\label{s13}
Assume that the hypotheses of Theorem \ref{s04} are fulfilled. Let
$\{\mu_N : N\ge 1\}$ be a sequence of probability measures and suppose
that there exists a sequence $S_N$, $T^{\rm mix}_N \ll S_N \ll \bb
E_{\nu_N} [H_N]$, such that
\begin{equation}
\label{19}
\lim_{N\to\infty} \bb P_{\mu_N} [\, H_N < S_N \,] \;=\; 0\;.
\end{equation}
Then, under $\mu_N$, $H_N/ \bb E_{\nu_N} [H_N]$ converges in
distribution to a mean one exponential random variable.
\end{corollary}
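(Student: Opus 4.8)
The plan is to condition the trajectory at the deterministic time $S_N$. This time is chosen long enough for the chain to mix (so that the law of $\eta(S_N)$ is close to $\nu_N$ in total variation) but short compared with the typical hitting time (so that the event $\{H_N\le S_N\}$ is negligible and the time shift $S_N$ is immaterial at the scale $\bb E_{\nu_N}[H_N]$). Write $E_N := \bb E_{\nu_N}[H_N]$ and fix $t>0$. By Theorem \ref{s04}, $H_N/E_N$ converges under $\nu_N$ to a mean one exponential, so it suffices to prove that $\bb P_{\mu_N}[H_N > t E_N] \to e^{-t}$. Since $S_N \ll E_N$, for all $N$ large we have $S_N < t E_N$, hence $\{H_N > t E_N\}\subset\{H_N > S_N\}$.

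First I would apply the Markov property at time $S_N$. On $\{H_N>S_N\}$ the process has not visited $A_N$ before $S_N$, so $H_N = S_N + H_N\circ\theta_{S_N}$, whence for $N$ large
\begin{equation*}
\bb P_{\mu_N}[H_N > t E_N] \;=\; \bb E_{\mu_N}\Big[\mb 1\{H_N > S_N\}\, \bb P_{\eta(S_N)}\big[H_N > t E_N - S_N\big]\Big]\;.
\end{equation*}
Writing $\mu_N P_{S_N}$ for the law of $\eta(S_N)$ under $\bb P_{\mu_N}$ and inserting the full expectation $\bb E_{\mu_N}[\,\cdot\,]$, the right-hand side equals $\bb P_{\mu_N P_{S_N}}[H_N > t E_N - S_N]$ up to an error bounded in absolute value by $\bb P_{\mu_N}[H_N \le S_N]$, which vanishes by hypothesis \eqref{19}.

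Next I would replace the initial law $\mu_N P_{S_N}$ by $\nu_N$. Since $S_N \gg T^{\rm mix}_N$, the submultiplicativity of the total variation distance along the semigroup used in the last step of Lemma \ref{s01} gives $\Vert \mu_N P_{S_N} - \nu_N \Vert_{\rm TV} \le (1/2)^{S_N/T^{\rm mix}_N}\to 0$, so that
\begin{equation*}
\big| \bb P_{\mu_N P_{S_N}}[H_N > t E_N - S_N] \,-\, \bb P_{\nu_N}[H_N > t E_N - S_N]\big| \;\le\; \Vert \mu_N P_{S_N} - \nu_N\Vert_{\rm TV}\;\longrightarrow\; 0\;.
\end{equation*}
Finally, since $(t E_N - S_N)/E_N = t - S_N/E_N \to t$ and, by Theorem \ref{s04}, $H_N/E_N$ converges under $\nu_N$ to a variable with the continuous distribution function $1 - e^{-t}$, a standard sandwiching at the continuity points $t\pm\epsilon$ yields $\bb P_{\nu_N}[H_N > t E_N - S_N]\to e^{-t}$. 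Combining the three displays gives $\bb P_{\mu_N}[H_N > t E_N]\to e^{-t}$, as required.

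The step I expect to be the main obstacle is the passage from the path-dependent event $\{H_N>S_N\}$ to a quantity depending on the trajectory only through $\eta(S_N)$: it is precisely the trick of inserting $\bb E_{\mu_N}[\,\cdot\,]$ and paying the price $\bb P_{\mu_N}[H_N\le S_N]$ that decouples the initial segment of the path from the residual hitting time and lets the mixing estimate take over. Some care is also needed to keep the three scales ordered as $T^{\rm mix}_N \ll S_N \ll E_N$, so that the mixing error tends to zero while the shift $S_N$ remains negligible at the scale $E_N$.
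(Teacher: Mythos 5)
Your proof is correct and follows essentially the same route as the paper's: apply the Markov property at time $S_N$, use hypothesis \eqref{19} to discard the event $\{H_N\le S_N\}$ when decoupling the initial segment from the residual hitting time, invoke $S_N\gg T^{\rm mix}_N$ to replace the law of $\eta(S_N)$ by $\nu_N$, and conclude from Theorem \ref{s04} together with $S_N\ll \bb E_{\nu_N}[H_N]$. The only differences are cosmetic (you work with the tail event $\{H_N>tE_N\}$ rather than its complement, and you spell out the total-variation and sandwiching estimates that the paper leaves implicit).
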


\begin{proof}
Let $\bb U_N = \bb E_{\nu_N} [H_N]$ and fix  $t>0$. Clearly,
\begin{equation*}
\bb P_{\mu_N} [\, H_N \le t \, \bb U_N \,] \;=\; \bb P_{\mu_N} [\, S_N \le
H_N \le t \, \bb U_N \,] \;+\; \bb P_{\mu_N} [\, S_N > H_N \,,\, 
H_N \le t \, \bb U_N \,]\;.
\end{equation*}
By assumption, the second term on the right hand side vanishes as
$N\uparrow\infty$, while the first one, by the Markov property, is
equal to
\begin{equation*}
\begin{split}
& \bb E_{\mu_N} \big[\, \mb 1\{S_N \le H_N\}\, 
\bb P_{\eta(S_N)}  [\, H_N \le t \, \bb U_N - S_N \,] \,\big] \\
&\quad =\; \bb E_{\mu_N} \big[\, 
\bb P_{\eta(S_N)}  [\, H_N \le t \, \bb U_N - S_N \,] \,\big] \\
&\qquad -\; \bb E_{\mu_N} \big[\, \mb 1\{S_N > H_N\}\, 
\bb P_{\eta(S_N)}  [\, H_N \le t \, \bb U_N - S_N \,] \,\big]
\;.   
\end{split}
\end{equation*}
As before, the second term on right hand side vanishes as
$N\uparrow\infty$. The first one, since $T^{\rm mix}_N \ll S_N$ is
equal to 
\begin{equation*}
\bb P_{\nu_N}  [\, H_N \le t \, \bb U_N - S_N \,] \;+\; R_N\;,
\end{equation*}
where $\lim_N R_N =0$. Since $S_N\ll \bb U_N$, by Theorem \ref{s04},
the first term in the previous displayed formula converges to
$1-e^{-t}$, which proves the corollary.
\end{proof}

To apply the previous corollary one needs among other things to
estimate $\bb P_{\mu_N} [\, H_N < S_N \,]$ and $\bb E_{\nu_N}
[H_N]$. In the next section we present a general method to estimate
the latter sequence when a dynamical large deviations principle is
available. There are several ways to bound $\bb P_{\mu_N} [\, H_N <
S_N \,]$. We present below three approaches. The first two uses the
enlarged processes introduced by Bianchi and Gaudilli\`ere \cite{bg1},
the second and the third ones are taken from the martingale approach
to metastability \cite{bl9}.

Consider a sequence $\gamma_N$ of positive real numbers. Let
$\Omega^\star_N$ be a copy of the set $\Omega_N$ and recall from
\cite[Section 2.C]{bl9} the definition of the enlarged process
associated to the sequence $\gamma_N$, a Markov process, denoted by
$\eta^\star(t)$, on $\Omega_N \cup \Omega^\star_N$ which jumps from a
state $\eta\in \Omega_N$ to its copy $\eta^\star\in \Omega^\star_N$ at
rate $\gamma_N$. Denote by $\nu^\star_N$ the stationary measure of the
enlarged process and recall that $\nu^\star_N (\eta) = \nu^\star_N
(\eta^\star) = (1/2) \nu_N (\eta)$. Let $\Cap_\star$ be the capacity
with respect to the enlarged process, and for a subset $B$ of
$\Omega_N$, denote by $B^\star$ the copy of the set $B$.  Next result
is Corollary 4.2 in \cite{bl9}.

\begin{lemma}
\label{s17}
Let $\mu_N$ be a sequence of probability measures concentrated on
$A^c_N$ and set $\gamma_N = S^{-1}_N$. Assume that
\begin{equation*}
\lim_{N\to\infty} S_N  \, E_{\nu_N} \Big[ \Big( 
\frac{d\mu_N}{d\nu_N}\Big)^2 \Big]\, 
\Cap_\star(A_N, (A^{c}_N)^\star)\;=\; 0\;.
\end{equation*}
Then, \eqref{19} holds.
\end{lemma}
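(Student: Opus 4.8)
The plan is to pass to the enlarged process and reduce \eqref{19} to a bound on the capacity $\Cap_\star(A_N,(A^c_N)^\star)$, exploiting that the transition to the star world occurs at the constant rate $\gamma_N=S^{-1}_N$.

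First I would trade the deterministic time $S_N$ for an exponential clock. Let $\tau$ be an independent exponential random variable of rate $\gamma_N$, which is precisely the law of the first jump time from $\Omega_N$ to $\Omega^\star_N$ in the enlarged process. Since $\{H_N<S_N\}\subseteq\{e^{-\gamma_N H_N}>e^{-1}\}$, Markov's inequality gives
\begin{equation*}
\bb P_{\mu_N}[H_N<S_N]\;\le\; e\,\bb E_{\mu_N}\big[e^{-\gamma_N H_N}\big]\;.
\end{equation*}
The key observation is that, for $\eta\in A^c_N$, the Laplace transform $\bb E_\eta[e^{-\gamma_N H_N}]=\bb P_\eta[H_N<\tau]$ equals the equilibrium potential $h_\star(\eta):=\bb P^\star_\eta[H_{A_N}<H_{(A^c_N)^\star}]$ of the enlarged process. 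Indeed, up to its first visit to $\Omega^\star_N$ the enlarged process evolves as $\eta(t)$, and this first visit occurs at rate $\gamma_N$ from any state, landing in $(A^c_N)^\star$ unless $A_N$ has already been reached; thus $\{H_{A_N}<H_{(A^c_N)^\star}\}=\{H_N<\tau\}$. Since $\mu_N$ is concentrated on $A^c_N$, this identifies $\bb E_{\mu_N}[e^{-\gamma_N H_N}]=\sum_{\eta\in A^c_N}\mu_N(\eta)\,h_\star(\eta)$, and it remains to prove that this sum vanishes.

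I would then bound the sum by the Cauchy--Schwarz inequality with respect to $\nu_N$, writing $\mu_N(\eta)=(d\mu_N/d\nu_N)(\eta)\,\nu_N(\eta)$:
\begin{equation*}
\sum_{\eta\in A^c_N}\mu_N(\eta)\,h_\star(\eta)
\;\le\; \Big(E_{\nu_N}\Big[\Big(\tfrac{d\mu_N}{d\nu_N}\Big)^2\Big]\Big)^{1/2}
\Big(\sum_{\eta\in A^c_N}\nu_N(\eta)\,h_\star(\eta)^2\Big)^{1/2}\;.
\end{equation*}
The second factor is controlled by the capacity. By the variational identity for non-reversible chains (equation (2.4) and Lemma 2.3 in \cite{gl2}), $\Cap_\star(A_N,(A^c_N)^\star)$ equals the Dirichlet form of $h_\star$, a sum of non-negative terms. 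Keeping only the contributions of the edges $\{\eta,\eta^\star\}$ with $\eta\in A^c_N$, whose symmetrized rate is $\gamma_N$ and along which $h_\star(\eta^\star)=0$, and using $\nu^\star_N(\eta)=\nu^\star_N(\eta^\star)=\tfrac12\nu_N(\eta)$, one gets
\begin{equation*}
\Cap_\star(A_N,(A^c_N)^\star)\;\ge\; \gamma_N\sum_{\eta\in A^c_N}\nu^\star_N(\eta)\,h_\star(\eta)^2
\;=\;\frac{\gamma_N}{2}\sum_{\eta\in A^c_N}\nu_N(\eta)\,h_\star(\eta)^2\;.
\end{equation*}
Combining the three displays yields $\bb P_{\mu_N}[H_N<S_N]\le e\sqrt{2}\,\big(S_N\,E_{\nu_N}[(d\mu_N/d\nu_N)^2]\,\Cap_\star(A_N,(A^c_N)^\star)\big)^{1/2}$, which tends to $0$ by hypothesis, proving \eqref{19}.

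The step I expect to require the most care is the identification of $\Cap_\star(A_N,(A^c_N)^\star)$ with the Dirichlet form of $h_\star$ together with its lower bound by the star-edges: one must check that the equilibrium potential of the \emph{forward} enlarged process is the correct test function, verify that the symmetrized rate of each edge $\{\eta,\eta^\star\}$ is exactly $\gamma_N$ (which uses the symmetry $\nu^\star_N(\eta)=\nu^\star_N(\eta^\star)$ of the enlarged stationary measure), and confirm that the harmonicity of $h_\star$ on $A^c_N$ is consistent with the killed equation $(L_N-\gamma_N)h_\star=0$ there. Everything else is a routine use of Markov's inequality and Cauchy--Schwarz.
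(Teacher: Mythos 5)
Your proof is correct, but there is nothing in the paper to compare it against line by line: the authors give no proof of Lemma \ref{s17}, stating only that it is Corollary 4.2 of \cite{bl9}, so your argument supplies a proof the paper delegates to a reference. All three steps check out. The Markov-inequality reduction $\bb P_{\mu_N}[H_N<S_N]\le e\,\bb E_{\mu_N}[e^{-\gamma_N H_N}]$ uses exactly the normalization $\gamma_N=S_N^{-1}$. The identification of $\bb E_\eta[e^{-\gamma_N H_N}]$ with the equilibrium potential $h_\star(\eta)=\bb P^\star_\eta[H_{A_N}<H_{(A^c_N)^\star}]$ is legitimate because, up to its first star-jump (an exponential time of rate $\gamma_N$ independent of the $\Omega_N$-trajectory, since that rate is the same from every state), the enlarged process coincides with the original one, and the jump lands in $(A^c_N)^\star$ precisely when $A_N$ has not yet been visited. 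Finally, the capacity bound survives non-reversibility: for the \emph{forward} equilibrium potential one has $\Cap_\star(A_N,(A^c_N)^\star)=\<h_\star,(-\ms L)h_\star\>_{\nu^\star_N}$, where $\ms L$ is the generator of the enlarged process (sum by parts and use that $h_\star$ is harmonic off $A_N\cup(A^c_N)^\star$ and vanishes on $(A^c_N)^\star$), and stationarity of $\nu^\star_N$ rewrites this as a sum of nonnegative edge terms, of which you keep only the star edges, each contributing $\gamma_N\nu^\star_N(\eta)h_\star(\eta)^2$. It is worth contrasting your route with the one the paper does display for the companion statement, Lemma \ref{s14}: there the authors also trade the deterministic horizon for an exponential clock viewed as the hitting time of $\Omega^\star_N$, but they then bound an additive functional via the mean-hitting-time formula of \cite[Proposition A.2]{bl7}, which yields the $L^1$-type quantity $\sum_\eta\mu_N(\eta)/\Cap_\star(\eta,\Omega^\star_N)$; your argument instead runs through Cauchy--Schwarz in $L^2(\nu_N)$ plus a Dirichlet-form lower bound on a single capacity, which is exactly what produces the factor $E_{\nu_N}[(d\mu_N/d\nu_N)^2]\,\Cap_\star(A_N,(A^c_N)^\star)$ in the hypothesis, presumably the content of the cited Corollary 4.2. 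One small inaccuracy: the identity you credit to equation (2.4) and Lemma 2.3 of \cite{gl2} (which this paper invokes for the different identity $\Cap=\Cap^*$) is really the elementary computation just described, and would be cleaner stated as such.
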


Theorems 2.4 and 2.7 in \cite{gl2} provide variational formulae for
the capacity. The second theorem expresses the capacity as an infimum
over flows. It permits, in particular, to obtain simple upper bounds.
An elementary bound for the capacity is obtained as follows. By
definition of the capacity and since $\nu^\star (\eta) = (1/2)
\nu(\eta)$,
\begin{equation*}
\Cap_\star(A_N, (A^{c}_N)^\star) \;=\; (1/2) \sum_{\eta\in A_N}
\nu(\eta) \big\{ R_N(\eta, \Omega_N) + \gamma_N \big\}\,
\bb P^\star_\eta \big[H_{(A^{c}_N)^\star} < H^+_{A_N} \big] \;,
\end{equation*}
where $\bb P^\star_\eta$ represent the distribution of the enlarged
process $\eta^\star(t)$ starting from $\eta$. Therefore, \eqref{19}
holds if
\begin{equation}
\label{22}
\lim_{N\to\infty} S_N  \, E_{\nu_N} \Big[ \Big( 
\frac{d\mu_N}{d\nu_N}\Big)^2 \Big]\, 
\big\{ S^{-1}_N + \max_{\eta\in A_N} R_N(\eta, \Omega_N)\big\} 
\, \nu_N(A_N) \;=\; 0\;.
\end{equation}

\begin{lemma}
\label{s14}
Let $\mu_N$ be a sequence of probability measures on $\Omega_N$ and
let $R'(\eta, A_N) = \mb 1\{\eta\in A^c_N\}\, R_N(\eta, A_N)$. Assume
that
\begin{equation*}
\lim_{N\to\infty} \Big\{ \mu_N(A_N) \;+\; E_{\nu_N} \big[R'_N(\eta, A_N)\big]\,
\sum_{\eta\in \Omega_N} \mu_N(\eta) \,
\frac{1} {\Cap_\star (\eta, \Omega^\star_N)} \Big\}\;=\; 0
\end{equation*}
for some sequence $\gamma^{-1}_N \gg S_N$. Then, \eqref{19} holds.
\end{lemma}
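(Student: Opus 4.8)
The plan is to transfer the estimate to the enlarged process associated with a rate sequence $\gamma_N$ satisfying $\gamma^{-1}_N\gg S_N$ (provided by the hypothesis), and then to reduce \eqref{19} to a first–moment computation controlled by a killed Green function and the capacity $\Cap_\star(\eta,\Omega^\star_N)$. First I would fix such a $\gamma_N$ and note that, up to its first jump to the starred copy, the enlarged process coincides with the original process $\eta(t)$: this jump occurs at an exponential time $\tau_\star$ of rate $\gamma_N$, and $H_{\Omega^\star_N}=\tau_\star$. Coupling the two dynamics so that they agree on $\{\tau_\star\ge S_N\}$, one gets $\{H_N<S_N\}\subseteq\{H_{A_N}<H_{\Omega^\star_N}\}$ on that event, while $\bb P[\tau_\star<S_N]\le \gamma_N S_N\to 0$. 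Hence
\begin{equation*}
\bb P_{\mu_N}[\,H_N<S_N\,]\;\le\;\gamma_N S_N\;+\;\bb P^\star_{\mu_N}[\,H_{A_N}<H_{\Omega^\star_N}\,]\;,
\end{equation*}
and since the first term vanishes it remains only to control the second.

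For the second term I would argue by a first moment. For $\eta\in A_N$ the probability equals $1$ and contributes $\mu_N(A_N)$; for $\eta\notin A_N$, before $H_{\Omega^\star_N}$ the process stays in $\Omega_N$, so reaching $A_N$ requires a genuine jump into $A_N$ through the original rates. Bounding the indicator of $\{H_{A_N}<H_{\Omega^\star_N}\}$ by the number of such jumps and writing $G^\star_\eta(\zeta)$ for the expected time spent at $\zeta\in\Omega_N$ before $H_{\Omega^\star_N}$ (the $\gamma_N$–resolvent of the original chain), I obtain
\begin{equation*}
\bb P^\star_\eta[\,H_{A_N}<H_{\Omega^\star_N}\,]\;\le\;\sum_{\zeta\in A^c_N}G^\star_\eta(\zeta)\,R_N(\zeta,A_N)\;.
\end{equation*}

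The crux is the pointwise bound $G^\star_\eta(\zeta)\le \nu^\star_N(\zeta)/\Cap_\star(\eta,\Omega^\star_N)$. I would derive it from the reversibility-free identity $\nu^\star_N(\eta)\,G^\star_\eta(\zeta)=\nu^\star_N(\zeta)\,G^{\star,*}_\zeta(\eta)$ for the killed Green functions of the enlarged process and of its adjoint, which is a consequence of $\nu_N(\eta)P_t(\eta,\zeta)=\nu_N(\zeta)P^*_t(\zeta,\eta)$ integrated against $e^{-\gamma_N t}$, together with $G^{\star,*}_\zeta(\eta)\le G^{\star,*}_\eta(\eta)=\nu^\star_N(\eta)/\Cap_\star(\eta,\Omega^\star_N)$, where the last equality expresses the diagonal Green function through the return capacity and uses $\Cap^*_\star=\Cap_\star$ (as in the proof of Lemma \ref{s05}, via \cite{gl2}). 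Inserting this bound, recalling $\nu^\star_N(\zeta)=(1/2)\nu_N(\zeta)$ and $E_{\nu_N}[R'_N(\eta,A_N)]=\sum_{\zeta\in A^c_N}\nu_N(\zeta)R_N(\zeta,A_N)$, and summing against $\mu_N$ gives
\begin{equation*}
\bb P^\star_{\mu_N}[\,H_{A_N}<H_{\Omega^\star_N}\,]\;\le\;\mu_N(A_N)\;+\;\frac12\,E_{\nu_N}\big[R'_N(\eta,A_N)\big]\sum_{\eta\in\Omega_N}\frac{\mu_N(\eta)}{\Cap_\star(\eta,\Omega^\star_N)}\;,
\end{equation*}
which is exactly the quantity in the hypothesis up to the harmless factor $1/2$, so \eqref{19} follows.

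The main obstacle is precisely this Green function estimate: the whole argument rests on the adjoint reciprocity for the killed chain and on identifying the diagonal Green function with $\nu^\star_N(\eta)/\Cap_\star(\eta,\Omega^\star_N)$, both of which must be justified in the non-reversible setting through the capacity theory of \cite{gl2} rather than a Dirichlet-principle shortcut. By contrast, the reduction to the enlarged process through $\gamma_N S_N\to 0$ and the first-moment bound by the expected number of entrances into $A_N$ are routine.
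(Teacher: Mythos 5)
Your proof is correct and follows essentially the same route as the paper: replace the deterministic time $S_N$ by the exponential hitting time $H_{\Omega^\star_N}$ of the enlarged process with rate $\gamma_N$, bound the probability of reaching $A_N$ by the expected number of entrances into $A_N$, and control the resulting occupation times by $\nu^\star_N(\zeta)/\Cap_\star(\eta,\Omega^\star_N)$. The only difference is bookkeeping: where the paper invokes \cite[Proposition A.2]{bl7} and bounds the equilibrium potential $\bb P^{\star,*}_\zeta[H_\eta<H_{\Omega^\star_N}]$ by $1$, you re-derive that occupation-time representation from the resolvent reciprocity $\nu_N(\eta)\,G^\star_\eta(\zeta)=\nu_N(\zeta)\,G^{\star,*}_\zeta(\eta)$ together with $G^{\star,*}_\zeta(\eta)\le G^{\star,*}_\eta(\eta)=\nu^\star_N(\eta)/\Cap_\star(\eta,\Omega^\star_N)$, which is the identical estimate in disguise (and legitimate here precisely because the killing time is an independent exponential).
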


In the reversible case, the Thomson principle permits to estimate from
below the capacity.  If $\gamma^{-1}_N \gg T^{\rm mix}_N$, starting
from any state in $\Omega_N$, the distribution of
$\eta^\star(H_{\Omega^\star_N})$ is close to the stationary state
$\nu_N$ lifted to $\Omega^\star_N$. Since the capacity can be
interpreted as the inverse of a distance, the sum on the right hand
side measures the distance from $\mu_N$ to the stationary state
$\nu_N$.

\begin{proof}[Proof of Lemma \ref{s14}]
We first replace the deterministic sequence $S_N$ in \eqref{19} by a
sequence of exponential random variables independent of the Markov
process $\eta(t)$. Denote by $\mf e_N$ a mean $\gamma^{-1}_N$
exponential time independent of the Markov process $\eta(t)$. Since
$S_N \ll \gamma^{-1}_N$, 
\begin{equation}
\label{21}
\limsup_{N\to\infty} \bb P _{\mu_N} [\, H_N < S_N \,] \;\le\;
\liminf_{N\to\infty} \bb P _{\mu_N} [\, H_N < \mf e_N \,]\;.
\end{equation}

Repeating the steps which led to \eqref{01}, we obtain that
\begin{equation*}
\bb P_{\mu_N} [\, H_N < \mf e_N \,] \;\le\; \mu_N(A_N) \;+\;
\bb E_{\mu_N} \Big[\, \int_0^{\mf e_N} R'_N(\eta(s), A_N) \, ds \, \Big
] \;.
\end{equation*}
In this step we used twice the monotone convergence theorem and we
replaced $\mf e_N$ by $\mf e_N \wedge t$ to overcome the unboundedness
of $\mf e_N$.

Clearly, starting from any configuration in $\Omega_N$, we may
interpret $\mf e_N$ as the hitting time of $\Omega^\star_N$ for the
enlarged process so that
\begin{equation*}
\bb E_{\mu_N} \Big[\, \int_0^{\mf e_N} R'_N(\eta(s), A_N) \, ds \,
\Big] \;=\; \bb E^\star_{\mu_N} \Big[\, \int_0^{H_{\Omega^\star_N}} 
R'_N(\eta^\star(s), A_N) \, ds \, \Big] \;.
\end{equation*}
By \cite[Proposition A.2]{bl7}, since the equilibrium potential is
bounded by $1$ and since $\nu^\star_N (\eta) = (1/2) \nu_N(\eta)$,
$\eta\in\Omega_N$, the previous expectation is equal to
\begin{equation*}
\begin{split}
& \sum_{\eta\in \Omega_N} \mu_N(\eta) \, \bb E^\star_{\eta}
\Big[\, \int_0^{H_{\Omega^\star_N}} 
R'_N(\eta^\star(s), A_N) \, ds \, \Big] \\
&\quad \;\le \; E_{\nu_N} \big[R'_N(\eta, A_N)\big]\,
\sum_{\eta\in \Omega_N} \mu_N(\eta) \,
\frac{1} {2\, \Cap_\star (\eta, \Omega^\star_N)}\;,
\end{split}
\end{equation*}
which proves the lemma.
\end{proof}

We conclude this section with a third set of sufficient conditions
for \eqref{19}. Denote by $T^{\rm rel}_N$ the relaxation time, i.e.
the inverse of the spectral gap of the symmetric part of the
generator, and denote by $\Vert\,\cdot\,\Vert_p$ the norm of $L^p(\nu_N)$,
$0<p\le \infty$.

\begin{lemma}
\label{s15}
Let $S_N$ be an increasing sequence and let $\mu_N$ be a sequence of
probability measures on $\Omega_N$. Assume that
\begin{equation*}
\begin{split}
& \lim_{N\to\infty} \Big\{ \mu_N(A_N) \;+\; S_N\, 
\nu_N (A_N^c)\, r_N(A_N^c, A_N) \Big\}\;=\; 0\;, \\
&\quad  
\lim_{N\to\infty} \Vert R^\prime_N( \,\cdot\,, A_N) \Vert_2 \, 
\Big\Vert \frac{d\mu_N}{d\nu_N} \Big\Vert_{2} 
\, T^{\rm rel}_N\, \Big( 1 - e^{-S_N/T^{\rm rel}_N} \Big) \;=\; 0\;,
\end{split}
\end{equation*}
where $R^\prime_N(\eta, A_N) = \mb 1\{\eta \not\in A_N\} \, R_N(\eta,
A_N)$. Then, \eqref{19} holds.
\end{lemma}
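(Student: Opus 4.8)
The plan is to bound $\bb P_{\mu_N}[H_N < S_N]$ from above by exactly the three quantities appearing in the two hypotheses and then let $N\to\infty$. As in the derivation of \eqref{01}, I would control the event $\{H_N<S_N\}$ through the counting process $N_t$ of jumps from $A_N^c$ into $A_N$: setting $X_t = \mb 1\{\eta(0)\in A_N\} + N_t$ one has $\{H_N<S_N\}\subset\{X_{S_N}\ge 1\}$, so by Markov's inequality together with the compensator identity for $N_t$ (the process $M_t$ introduced before \eqref{01} being a martingale under any initial law),
\begin{equation*}
\bb P_{\mu_N}[H_N < S_N] \;\le\; \mu_N(A_N) \;+\; \bb E_{\mu_N}[N_{S_N}]
\;=\; \mu_N(A_N) \;+\; \int_0^{S_N} \bb E_{\mu_N}\big[ R'_N(\eta(s), A_N)\big]\, ds\;.
\end{equation*}
The first term is handled by the first hypothesis, and it remains to estimate the time integral.

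Write $f = R'_N(\,\cdot\,, A_N)$ and let $P_s$ denote the semigroup. Changing measure to the stationary state, I would rewrite the integrand as
\begin{equation*}
\bb E_{\mu_N}[f(\eta(s))] \;=\; \Big\langle \frac{d\mu_N}{d\nu_N},\, P_s f \Big\rangle_{\nu_N}\;,
\end{equation*}
where $\langle g, h\rangle_{\nu_N} = \sum_\eta \nu_N(\eta)\, g(\eta)\, h(\eta)$. The next step is to split $f$ into its $\nu_N$-average and its fluctuation, $f = \nu_N(f) + \tilde f$ with $\tilde f = f - \nu_N(f)$. Because $d\mu_N/d\nu_N$ has $\nu_N$-mean one and $P_s$ fixes constants, the average part contributes $\nu_N(f)$, and a one-line computation gives $\nu_N(f) = \sum_{\eta\in A_N^c}\nu_N(\eta)\,R_N(\eta, A_N) = \nu_N(A_N^c)\, r_N(A_N^c, A_N)$. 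Integrated over $[0,S_N]$ this produces exactly the term $S_N\,\nu_N(A_N^c)\, r_N(A_N^c, A_N)$ of the first hypothesis.

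For the fluctuation part I would use Cauchy–Schwarz together with the spectral-gap contraction of the semigroup. Since $\nu_N$ is stationary, $P_s\tilde f$ stays $\nu_N$-centered, so
\begin{equation*}
\Big|\Big\langle \frac{d\mu_N}{d\nu_N},\, P_s \tilde f\Big\rangle_{\nu_N}\Big| \;\le\; \Big\Vert \frac{d\mu_N}{d\nu_N}\Big\Vert_2 \, \Vert P_s \tilde f\Vert_2 \;\le\; \Big\Vert \frac{d\mu_N}{d\nu_N}\Big\Vert_2\, e^{-s/T^{\rm rel}_N}\, \Vert \tilde f\Vert_2\;.
\end{equation*}
Bounding $\Vert\tilde f\Vert_2 \le \Vert f\Vert_2 = \Vert R'_N(\,\cdot\,, A_N)\Vert_2$ and integrating $e^{-s/T^{\rm rel}_N}$ over $[0,S_N]$ yields the factor $T^{\rm rel}_N\big(1 - e^{-S_N/T^{\rm rel}_N}\big)$, which matches the second hypothesis exactly. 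Collecting the three estimates and passing to the limit then gives \eqref{19}.

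The step I expect to require the most care is the contraction bound $\Vert P_s g\Vert_2 \le e^{-s/T^{\rm rel}_N}\Vert g\Vert_2$ for $\nu_N$-centered $g$, since the process is not reversible and $T^{\rm rel}_N$ refers only to the symmetric part of the generator. The crucial point is that in the energy identity $\frac{d}{ds}\Vert P_s g\Vert_2^2 = 2\langle P_s g, L_N P_s g\rangle_{\nu_N}$ the antisymmetric part $\frac12(L_N - L_N^*)$ contributes nothing to the quadratic form, so the right-hand side equals the Dirichlet form of the symmetric part $\frac12(L_N + L_N^*)$ evaluated at the mean-zero function $P_s g$; as $T^{\rm rel}_N$ is by definition the inverse of the spectral gap of this symmetric part, Gronwall's inequality delivers the stated exponential decay and closes the argument.
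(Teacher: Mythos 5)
Your proof is correct and takes essentially the same route as the paper's: the same reduction of $\bb P_{\mu_N}[H_N<S_N]$ via the compensated counting process, the same decomposition of the jump rate into its $\nu_N$-mean plus a centered fluctuation, Cauchy--Schwarz, and the spectral-gap contraction coming from the symmetric part of the generator. The only difference is cosmetic: you let the semigroup $P_s$ act on the observable and contract $\Vert P_s \tilde f\Vert_2$, whereas the paper evolves the density $f_s$ of the law at time $s$ by the adjoint generator $L_N^*$ and contracts its variance --- the two computations are adjoint to one another and yield the identical bound.
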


We may estimate $\Vert R^\prime_N( \,\cdot\,, A_N) \Vert^2_2$ by
$\Vert R^\prime_N( \,\cdot\,, A_N) \Vert_\infty \, \Vert R^\prime_N(
\,\cdot\,, A_N) \Vert_1$ and recall that $\Vert R^\prime_N( \,\cdot\,,
A_N) \Vert_1 = \nu_N (A_N^c)\, r_N(A_N^c, A_N)$ which vanishes
asymptotically.

\begin{proof}[Proof of Lemma \ref{s15}]
Repeating the steps which led to \eqref{01}, we
obtain that
\begin{equation*}
\begin{split}
& \bb P_{\mu_N} [\, H_N < S_N \,] \;\le\; \mu_N(A_N) \;+\;
\bb E_{\mu_N} \Big[\, \int_0^{S_N} \mb 1\{\eta(s) \not\in A_N\} \,
R_N(\eta(s), A_N) \, ds \, \Big ]
\\
&\qquad =\;
\mu_N(A_N) \;+\; S_N\, \nu_N (A_N^c)\, r_N(A_N^c, A_N) \;+\;
\int_0^{S_N} \bb E_{\mu_N} \Big[\,\hat R_N(\eta(s)) \, \Big ] \, ds
\; ,
\end{split}
\end{equation*}
where $\hat R_N(\eta)$ is the $\nu_N$-mean zero function $\hat
R_N(\eta) = R^\prime_N(\eta, A_N) - E_{\nu_N}[R^\prime_N(\eta, A_N)]$,
and $E_{\nu_N}[R^\prime_N(\eta, A_N)] = \nu_N (A_N^c)\, r_N(A_N^c,
A_N)$.

We estimate the last term of the previous displayed equation.  Let
$f_s (\eta)$, $\eta\in\Omega_N$, $s\ge 0$, be the unique solution of
\begin{equation*}
f_0(\eta)\;=\; \frac{\mu_N(\eta)}{\nu_N(\eta)}\;, \quad \frac d{ds}
f_s = L^*_N f_s\;,
\end{equation*}
where $L^*_N$ stands for the adjoint of $L_N$ in $L^2(\nu_N)$. With
this notation the integral in the penultimate displayed equation
becomes
\begin{equation*}
\int_0^{S_N} \<\hat R_N , f_s \>_{\nu_N} \, ds \;\le\; 
\Vert \hat R_N \Vert_2 \,
\int_0^{S_N} \< f_s ; f_s \>_{\nu_N}^{1/2} \, ds \;,
\end{equation*}
where $\<\,\cdot\,,\,\cdot\, \>_{\nu_N}$ represents the scalar product
in $L^2(\nu_N)$ and $\<f_s; f_s \>_{\nu_N}$ the variance of $f_s$. It
is well known that $\< f_s ; f_s \>_{\nu_N} \le \< f_0 ; f_0
\>_{\nu_N} e^{-2s/T^{\rm rel}_N}$. The previous expression is thus
bounded by
\begin{equation*}
\Vert \hat R_N \Vert_2 \, \< f_0 ; f_0 \>_{\nu_N}^{1/2} 
\, T^{\rm rel}_N\, \Big( 1 - e^{-S_N/T^{\rm rel}_N} \Big)\;,
\end{equation*}
which proves the lemma by replacing variances by $L^2$ norms.
\end{proof}

\section{Expectations of hitting times}
\label{sec3}

We showed in the previous section that in the context of finite state
Markov processes, the hitting time of rare events is asymptotically
distributed according to an exponential law. We show in this section
that the expectation under the stationary measure of these hitting
times can be estimated if one is able to prove a dynamical large
deviations principle. Instead of presenting this result in a general
setting, we examine the case of the BDSSEP.

\subsection*{The dynamical large deviation principle}

We recall a result first proved in \cite{bdgjl3}, and then in
\cite{blm1} in the form presented below. We say that sequence of
configurations $\{\eta^N : N\ge 1\}$, $\eta^N\in \Omega_N$, is
\emph{associated} to the macroscopic density profile $\rho\in\ms M$ if
the sequence $\pi^N(\eta^N)$ converges to $\rho$ in $\ms M$ as
$N\to\infty$.

Given $T>0$, we denote by $D\big([0,T];\ms M\big)$ the Skorohod space
of paths from $[0,T]$ to $\ms M$ equipped with its Borel
$\sigma$-algebra. Elements of $D\big([0,T], \ms M\big)$ will be
denoted by $u(t)$ and sometimes $u_t$.  

Fix a profile $\gamma\in \ms M$ and consider a sequence $\{\eta^N :
N\ge 1\}$ associated to $\gamma$. It has been proven in
\cite{els2,klo2} following the work of \cite{dv, kov} that as
$N\to\infty$ the sequence of random variables
\begin{equation}
\label{13}
\pi^N(t) \;:=\; \pi^N(\eta^N (t N^2))\;,
\end{equation}
which take values in $D\big([0,T], \ms M\big)$, converges in
probability to the unique weak solution $u(t)$ of the heat equation
with Dirichlet boundary conditions:
\begin{equation}
\label{09}
\left\{
\begin{array}{l}
\partial_t u = (1/2) \Delta u \;, \\
u(t,0) = \alpha\;, \quad u(t,1) = \beta \;, \quad t\ge 0\;, \\
u(0,x) = \gamma(x) \;, \quad 0\le x\le 1 \;.
\end{array}
\right. 
\end{equation}
Note that time has been speeded-up by $N^2$ in \eqref{13}.  

Recall the definition of the rate functional $I_{[0,T]} (\cdot |
\gamma)$ of the dynamical large deviations principle introduced in
\eqref{26}. The next two results have been proven in \cite{blm1}.

\begin{lemma}
\label{s11}
Fix $\gamma\in \ms M$ and $T>0$.  The functional $I_{[0,T]} (\cdot |
\gamma)$ is lower semicontinuous and has compact level sets. Any path
$u$ with finite rate function, $I_{[0,T]} (u | \gamma)<\infty$, is
continuous in time and satisfies the boundary conditions $u(0,\cdot) =
\gamma(\cdot)$, $u(\cdot, 0) = \alpha$, $u(\cdot, 1) =
\beta$. Furthermore, any trajectory $u$ with finite rate function can
be approximated by a sequence of smooth trajectories $\{u^n : n\ge
1\}$ in such a way that $I_{[0,T]}(u^n | \gamma)$ converges to
$I_{[0,T]}(u | \gamma)$.
\end{lemma}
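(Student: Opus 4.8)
The plan is to treat the four assertions in turn, exploiting the variational structure $\hat I_{[0,T]}(\cdot|\gamma) = \sup_H \hat J_H(\cdot|\gamma)$ and the convexity of the energy $\mc Q$. For \textbf{lower semicontinuity}, I would first fix $H\in C^{1,2}_0([0,T]\times[0,1])$ and check that $u \mapsto \hat J_H(u|\gamma)$ is lower semicontinuous on $D([0,T],\ms M)$. All terms of \eqref{o01} except the last are linear in $u$ and continuous for the weak topology. The final term $-\tfrac12\int_0^T \langle \chi(u_t), (\nabla H_t)^2\rangle\, dt$ I would handle by noting that $-\chi(a)=a^2-a$ is convex while $(\nabla H_t)^2\ge 0$, so this is a convex integral functional of $u$; since $0\le u\le 1$ is uniformly bounded, weak convergence in $\ms M$ coincides with weak $L^2$ convergence, along which convex integrands are lower semicontinuous. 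Taking the supremum over $H$ preserves this, so $\hat I_{[0,T]}(\cdot|\gamma)$ is lower semicontinuous. To descend to $I_{[0,T]}$ I would combine the lower semicontinuity of $\mc Q$ (quoted from \cite{blm1}) with an energy estimate of the form $\mc Q(u)\le C\,(1+\hat I_{[0,T]}(u|\gamma))$, obtained by inserting test functions $H=\delta G$ into \eqref{o01}, expanding to first order in $\delta$ and optimizing; this guarantees that a weak limit of trajectories with bounded rate function still has finite energy.

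For the \textbf{compact level sets} I would extract from the variational formula a uniform modulus-of-continuity bound: for each $F_k$ entering the metric \eqref{24}, choosing $H$ built from $F_k$ and localized to $[s,t]$ in \eqref{o01} yields $|\<u_t,F_k\>-\<u_s,F_k\>|\le C_k\,|t-s|^{1/2}$ whenever $I_{[0,T]}(u|\gamma)\le M$, with $C_k$ depending only on $M$ and $F_k$. As $\ms M$ is compact, this equicontinuity gives, via an Arzel\`a--Ascoli argument, relative compactness of the level set inside $C([0,T],\ms M)$, while closedness follows from the lower semicontinuity just established. The same modulus bound shows that any $u$ with finite rate function is continuous in time; the boundary conditions $u(0,\cdot)=\gamma$ and $u(\cdot,0)=\alpha$, $u(\cdot,1)=\beta$ are forced by testing \eqref{o01} against functions concentrated near $t=0$ and near the spatial endpoints, using that violating them would drive $\hat J_H$ to $+\infty$.

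The \textbf{smooth approximation} is where the real difficulty lies. The structural fact I would exploit is that $I_{[0,T]}(\cdot|\gamma)$ is convex: each $\hat J_H$ is convex in $u$ by the same convexity of $-\chi$ used above, $\mc Q$ is convex, and $\{\mc Q<\infty\}$ is a convex set, so $I$ is a supremum of convex functionals on a convex domain. Given $u$ with finite rate function I would regularize in three stages: first pushing $u$ off the values $0$ and $1$ by a convex combination with $\bar\rho$ (which lies in $[\alpha,\beta]\subset(0,1)$ and respects all boundary data), then mollifying in time with a correction of the initial datum, then mollifying in space compatibly with the Dirichlet conditions. At each stage the bound $\liminf_n I_{[0,T]}(u^n|\gamma)\ge I_{[0,T]}(u|\gamma)$ is free from lower semicontinuity, so the content is the matching upper bound $\limsup_n I_{[0,T]}(u^n|\gamma)\le I_{[0,T]}(u|\gamma)$; here convexity is decisive, since mollification is an averaging operation and Jensen-type inequalities applied to the convex functional $I$ control the rate function of the regularized path, while the explicit form \eqref{o01} lets me verify convergence of the boundary and energy terms.

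I expect this last step to be the main obstacle, for two reasons. Verifying $\limsup_n I(u^n|\gamma)\le I(u|\gamma)$ along each regularization requires delicate control of the nonlinear term $\<\chi(u_t),(\nabla H_t)^2\>$ and of the energy $\mc Q(u^n)$, and arranging the three regularizations so that neither the boundary conditions nor the finiteness of the energy is destroyed is technically demanding — which is precisely why pushing $u$ away from $0$ and $1$ must precede the mollifications. The energy estimate $\mc Q(u)\le C(1+\hat I(u|\gamma))$ underpinning the first two parts is itself a nontrivial ingredient that I would establish carefully before turning to the approximation.
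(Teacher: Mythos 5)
The first thing to note is that the paper does not prove Lemma \ref{s11} at all: it is stated with the remark ``The next two results have been proven in \cite{blm1}'', so your proposal can only be measured against the argument of \cite{blm1} (see also \cite{flm1}). For the first three assertions your outline does follow that route: lower semicontinuity of each $\hat J_H$ (the linear terms are continuous in the Skorohod/weak topology, and the $\chi$-term is handled by convexity of $a\mapsto a^2$ under weak $L^2$ convergence of uniformly bounded densities), an energy estimate of the type $\mc Q(u)\le C_0\bigl(1+\hat I_{[0,T]}(u|\gamma)\bigr)$ to reconcile the constraint $\mc Q<\infty$ with lower semicontinuity, and a uniform modulus of continuity on level sets combined with compactness of $\ms M$ and Arzel\`a--Ascoli. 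These are precisely the ingredients used in \cite{blm1}, and your sketch of them is essentially sound (modulo the usual care needed to extract the lateral boundary conditions, which requires the finite-energy structure since elements of $\ms M$ have no trace without it).

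The genuine gap is in the approximation step. Your first regularization replaces $u$ by $(1-\epsilon)u+\epsilon\bar\rho$. This destroys the initial condition: the new path starts from $(1-\epsilon)\gamma+\epsilon\bar\rho\ne\gamma$, and by the second assertion of the very lemma you are proving, any path with finite rate function relative to $\gamma$ must start at $\gamma$. Hence $I_{[0,T]}\bigl((1-\epsilon)u+\epsilon\bar\rho\,\big|\,\gamma\bigr)=+\infty$ for every $\epsilon>0$, and no later mollification can repair this: joint convexity of $I$ only controls the rate function relative to the convex combination of initial data, $(1-\epsilon)\gamma+\epsilon\bar\rho$, not relative to $\gamma$. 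The scheme of \cite{blm1} avoids exactly this pitfall by interpolating with the solution $\bar u$ of the hydrodynamic equation \eqref{09} started from $\gamma$ itself: then $I_{[0,T]}(\bar u|\gamma)=0$, joint convexity gives $I_{[0,T]}\bigl((1-\epsilon)u+\epsilon\bar u\,\big|\,\gamma\bigr)\le(1-\epsilon)I_{[0,T]}(u|\gamma)$, the initial datum is preserved, and for $t>0$ the combined path is pushed away from $0$ and $1$ because $\bar u(t,\cdot)\in(0,1)$ by the maximum principle. The subsequent time- and space-mollifications must also respect this structure: the approximating paths can only be smooth for $t>0$ (they cannot be smooth at $t=0$ when $\gamma$ is rough), so ``mollifying in time with a correction of the initial datum'' is precisely where the real work lies, and as written your scheme already fails at its first stage.
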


The dynamical large deviation principle can now be stated.

\begin{theorem}
\label{s08}
Fix $T>0$ and an initial profile $\gamma$ in $\ms M$.  Consider a
sequence $\{\eta^N : N\ge 1\}$ of configurations associated to
$\gamma$.  Then, the sequence of probability measures $\{\bb
P^N_{\eta^N} \circ (\pi^N (N^2 \,\cdot\,)^{-1} : N\ge 1\}$ on
$D([0,T], \ms M)$ satisfies a large deviation principle with speed $N$
and good rate function $I_{[0,T]}(\cdot|\gamma)$.  Namely,
$I_{[0,T]}(\cdot|\gamma): D\big([0,T]; \ms M\big) \to [0,\infty]$ has
compact level sets and for each closed set $\mf C \subset D([0,T], \ms
M)$ and each open set $\mf O \subset D([0,T], \ms M)$
\begin{eqnarray*}
&& 
\limsup_{N\to\infty} \frac 1N \log \bb P^N_{\eta^N} 
\big( \pi^N (N^2 \,\cdot\,) \in \mf C\big)
\;\leq\; - \inf_{u \in \mf C} I_{[0,T]} (u | \gamma)  
\\
&& \qquad 
\liminf_{N\to\infty} \frac 1N \log \bb P^N_{\eta^N} 
\big( \pi^N (N^2 \,\cdot\,) \in \mf O \big) 
\;\geq\; -  \inf_{u \in \mf O} I_{[0,T]} (u |\gamma) \; . 
\end{eqnarray*}
\end{theorem}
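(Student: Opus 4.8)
The plan is to follow the Donsker--Varadhan scheme adapted to conservative dynamics by Kipnis, Olla and Varadhan, with the modifications forced by the two reservoirs. The engine of both bounds is the family of exponential martingales: for each $H\in C^{1,2}_0([0,T]\times[0,1])$ the process
\begin{multline*}
W^{N,H}_t \;=\; \exp\Big\{ N\langle \pi^N(t),H_t\rangle - N\langle \pi^N(0),H_0\rangle \\
-\; \int_0^t e^{-N\langle \pi^N(s),H_s\rangle}\,(\partial_s + N^2 L_N)\, e^{N\langle \pi^N(s),H_s\rangle}\, ds \Big\}
\end{multline*}
is a mean-one positive $\bb P^N_{\eta^N}$-martingale, where $\pi^N(s)$ denotes the diffusively rescaled empirical measure of \eqref{13}. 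The first task is the algebraic core: a Taylor expansion of the generator term in powers of $1/N$, a summation by parts in the bulk, and a separate treatment of the two boundary sites (where particles are injected and extracted at rates governed by $\alpha$ and $\beta$) show that, whenever $\pi^N\to u$ in $D([0,T],\ms M)$, one has $N^{-1}\log W^{N,H}_T \to \hat J_H(u|\gamma)$ with $\hat J_H$ precisely the functional \eqref{o01}; the injection rates are exactly what produce the boundary contributions $\tfrac\beta2\nabla H_t(1)$ and $\tfrac\alpha2\nabla H_t(0)$, and $\pi^N(0)\to\gamma$ since $\eta^N$ is associated to $\gamma$.

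For the upper bound I would first treat compact sets. Fixing $H$, using $\bb E^N_{\eta^N}[W^{N,H}_T]=1$ and the bound $N^{-1}\log W^{N,H}_T \ge \inf_{u\in\mf C}\hat J_H(u|\gamma)+o(1)$ valid on $\{\pi^N\in\mf C\}$, Markov's inequality gives $\limsup_N N^{-1}\log \bb P^N_{\eta^N}(\pi^N\in\mf C) \le -\inf_{u\in\mf C}\hat J_H(u|\gamma)$. Since this holds for every $H$, a minimax theorem (applicable because $\hat J_H(u|\gamma)$ is convex in $u$ and concave in $H$, and $\mf C$ may be taken compact) converts $\sup_H\inf_{u\in\mf C}$ into $\inf_{u\in\mf C}\sup_H = \inf_{u\in\mf C}\hat I_{[0,T]}(u|\gamma)$. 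To replace $\hat I$ by the full rate function $I$ of \eqref{26} I would establish a super-exponential energy estimate, showing that for every $C$ the trajectories with $\mc Q(\pi^N)$ large carry probability at most $e^{-CN}$, so the restriction $\mc Q(u)<\infty$ costs nothing. Exponential tightness in $D([0,T],\ms M)$, proved by controlling the modulus of continuity of $\langle \pi^N,F_k\rangle$ with the same martingales applied to the test functions $F_k$ of \eqref{24}, then upgrades the compact-set bound to arbitrary closed sets.

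The lower bound is where the genuine difficulty lies. By the approximation statement of Lemma \ref{s11} it suffices to prove $\liminf_N N^{-1}\log \bb P^N_{\eta^N}(\pi^N\in\mf O)\ge -I_{[0,T]}(u|\gamma)$ for $u$ ranging over smooth trajectories of finite rate function, since any such $u$ lying in the open set $\mf O$ can be approximated there with convergence of $I_{[0,T]}$. For a smooth $u$ there is a canonical external field $H=H(u)$ for which $u$ is the hydrodynamic limit of the dynamics tilted by $H$ and for which $\hat J_H(u|\gamma)=I_{[0,T]}(u|\gamma)$. Using $W^{N,H}$ as Radon--Nikodym density to define the tilted law $\bb P^{N,H}$, I would write $\bb P^N_{\eta^N}(\pi^N\in\mf O)=\bb E^{N,H}[\mb 1\{\pi^N\in\mf O\}\,(W^{N,H}_T)^{-1}]$, restrict the expectation to a small neighbourhood $B\subset\mf O$ of $u$ on which $N^{-1}\log W^{N,H}_T$ converges uniformly to $\hat J_H(u|\gamma)$, and invoke a law of large numbers $\bb P^{N,H}(\pi^N\in B)\to 1$ to conclude $\liminf_N N^{-1}\log\bb P^N_{\eta^N}(\pi^N\in\mf O)\ge -\hat J_H(u|\gamma)=-I_{[0,T]}(u|\gamma)$.

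I expect the main obstacle to be precisely this law of large numbers for the perturbed process, which requires closing the hydrodynamic equation through a replacement lemma: a super-exponential one-block and two-block estimate permitting the substitution of the local occupation $\eta(x)$, and of the instantaneous current, by functions of the empirical density, so that $\chi(\eta(x))$ may be replaced by $\chi(u)$. The reservoirs make this delicate at the two boundary sites, where the particle number is not conserved; there one must control the boundary currents and verify that the macroscopic Dirichlet conditions $u(\cdot,0)=\alpha$, $u(\cdot,1)=\beta$ emerge in the limit. Granting these super-exponential estimates, established for the BDSSEP in \cite{blm1}, the two halves combine in the standard manner to give the large deviation principle with good rate function $I_{[0,T]}(\cdot|\gamma)$.
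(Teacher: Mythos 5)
The paper itself contains no proof of Theorem \ref{s08}: it is imported from \cite{blm1} (following \cite{bdgjl3}), with the sentence ``The next two results have been proven in \cite{blm1}.'' So there is no in-paper argument to compare yours against line by line; the relevant comparison is with the scheme of that reference, and also with the paper's proof of Theorem \ref{s10}, which reuses its ingredients. At the level of architecture your sketch is the right one and matches \cite{blm1}: exponential martingales indexed by $H\in C^{1,2}_0$, upper bound for compacts via Chebyshev and a minimax argument, an energy estimate and exponential tightness to pass to closed sets, lower bound by tilting the dynamics and proving a law of large numbers for the perturbed process, and the approximation statement of Lemma \ref{s11} to reduce the lower bound to smooth trajectories.

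One step of your outline, however, is stated incorrectly, and it hides the main technical point. It is not true that $N^{-1}\log W^{N,H}_T \to \hat J_H(u|\gamma)$ whenever $\pi^N \to u$: computing $e^{-N\langle \pi^N_s,H_s\rangle}(\partial_s + N^2 L_N)\, e^{N\langle \pi^N_s,H_s\rangle}$ for exclusion dynamics produces, at order one, nearest-neighbour correlation terms of the type $N^{-1}\sum_x \eta(x)\eta(x+1)(\nabla H(x/N))^2$, together with boundary terms, and these are \emph{not} functions of the empirical measure, hence do not converge under weak convergence of $\pi^N$ alone. This is exactly where the super-exponential (one-block/two-block, plus mollification) estimates of \cite[Theorem 3.2]{blm1} must enter: they are needed already in the \emph{upper} bound, to replace $M^H_T$ by $\exp N\{\hat J_H(\pi^{N,\epsilon}|\pi^N_0)+O_H(\epsilon)+O(\delta)\}$ outside an event of super-exponentially small probability — this is precisely how the paper's own proof of Theorem \ref{s10} invokes them. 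Your sketch instead defers the replacement lemma entirely to the lower-bound law of large numbers, where in fact only replacement in probability (ordinary hydrodynamics of the weakly perturbed process) is required; super-exponential control is dispensable there but indispensable in the upper bound. With that misattribution corrected, your outline coincides with the proof given in \cite{blm1}.
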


\subsection*{The static large deviation principle}
The large deviations principle for the empirical measure under the
stationary state $\nu^N_{\alpha, \beta}$, stated below, is taken from
\cite{bg2, f1}.

\begin{theorem}
\label{s09}
The sequence of probability measures $\{\nu^N_{\alpha, \beta} \circ
(\pi^N)^{-1} : N\ge 1\}$ on $\ms M$ satisfies a large deviation
principle with speed $N$ and good rate function $V$.  Namely, $V: \ms
M \to [0,\infty]$ has compact level sets and for each closed set $\ms
C \subset \ms M$ and each open set $\ms O \subset \ms M$
\begin{eqnarray*}
&& 
\limsup_{N\to\infty} \frac 1N \log \nu^N_{\alpha, \beta}
\big( \pi^N \in \ms C\big)
\;\leq\; - \inf_{\gamma \in \ms C} V(\gamma)  
\\
&& \qquad 
\liminf_{N\to\infty} \frac 1N \log \nu^N_{\alpha, \beta}
\big( \pi^N\in \ms O \big) \;\geq\; -  \inf_{\gamma \in \ms O} 
V(\gamma) \; . 
\end{eqnarray*}
\end{theorem}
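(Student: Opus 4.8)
The plan is to derive this static large deviation principle from the dynamical one (Theorem \ref{s08}) together with the identification of $V$ as the quasi-potential \eqref{08}, following the Freidlin--Wentzell strategy adapted to the hydrodynamic scaling. Two structural facts make this feasible: the space $\ms M$ is compact, so every closed set is automatically compact and no exponential tightness is needed; and the profile $\bar\rho$ is the global attractor of the hydrodynamic equation \eqref{09}, every weak solution relaxing to $\bar\rho$ as time grows. Throughout I would use stationarity in the form $\nu^N_{\alpha,\beta}(\pi^N \in \ms A) = \bb P_{\nu^N_{\alpha,\beta}}[\pi^N(\eta^N(N^2 T)) \in \ms A]$ for every $T>0$, together with the concentration $\nu^N_{\alpha,\beta}(\pi^N \in B_\delta(\bar\rho)) \to 1$ coming from \cite{els1}.

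For the lower bound, fix $\gamma \in \ms O$ and $\epsilon>0$. By the definition \eqref{08} of $V$ there exist $T>0$ and a path $u$ on $[-T,0]$ with $u(-T)=\bar\rho$, $u(0)=\gamma$ and $I_{[-T,0]}(u|\bar\rho) \le V(\gamma)+\epsilon$, which by Lemma \ref{s11} may be taken smooth. Starting the stationary process from $\nu^N_{\alpha,\beta}$, I would condition on $\pi^N(\eta^N(0)) \in B_\delta(\bar\rho)$, an event of probability tending to one, apply the Markov property, and invoke the dynamical lower bound of Theorem \ref{s08} on the open tube of trajectories that track $u$ and terminate in $\ms O$. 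This yields $\liminf_N \tfrac1N \log \nu^N_{\alpha,\beta}(\pi^N \in \ms O) \ge -(V(\gamma)+\epsilon)$; letting $\epsilon,\delta \downarrow 0$ and optimizing over $\gamma\in\ms O$ gives the claimed bound $-\inf_{\gamma\in\ms O} V(\gamma)$.

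For the upper bound, fix a (necessarily compact) closed set $\ms C$; the case $\bar\rho\in\ms C$ is trivial since $V(\bar\rho)=0$, so assume $v:=\inf_{\ms C}V>0$. Fix $\delta>0$, write $B=\overline{B_\delta(\bar\rho)}$, and use stationarity to express $\nu^N_{\alpha,\beta}(\pi^N\in\ms C)$ as the probability that the trajectory $\{\pi^N(\eta^N(N^2 s)):0\le s\le T\}$ lies in $\ms C$ at time $T$. I would split according to the last time $\tau\le T$ at which the trajectory visits $B$. On the event that such a $\tau$ exists, the segment on $[\tau,T]$ runs from $B$ to $\ms C$ without returning to $B$, and by the definition of the quasi-potential together with a gluing argument its cost is at least $v-o_\delta(1)$; discretizing $\tau$ into finitely many sub-intervals and applying the dynamical upper bound of Theorem \ref{s08} on each bounds this contribution by a polynomial factor times $e^{-N(v-o_\delta(1)-\epsilon)}$. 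On the complementary event the trajectory stays outside $B$ throughout $[0,T]$, which by a uniform relaxation estimate costs at least $\kappa(T)$ with $\kappa(T)\to\infty$. Sending $N\to\infty$, then $T\to\infty$ and $\delta,\epsilon\downarrow 0$, yields $\limsup_N \tfrac1N\log\nu^N_{\alpha,\beta}(\pi^N\in\ms C)\le -v$.

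The main obstacle is the upper bound, and within it the uniform relaxation estimate: one must show that, uniformly over initial profiles $\gamma\in\ms M$, any trajectory avoiding $B$ for a long time carries rate-function cost growing with the time horizon. This encodes the global attraction of $\bar\rho$ at the level of the rate functional and rests on the compactness of the level sets of $I_{[0,T]}$ (Lemma \ref{s11}) and the convergence of all solutions of \eqref{09} to $\bar\rho$. A second delicate point is that stationarity forces us to start the process from $\nu^N_{\alpha,\beta}$ rather than from a fixed profile, so the finite-horizon dynamical principle must be combined with a covering of the compact space $\ms M$ and with the uniform relaxation estimate to control the random initial data; reconciling the infimum-over-all-horizons definition \eqref{08} of $V$ with the fixed-horizon statements of Theorem \ref{s08} is the technical heart of the argument.
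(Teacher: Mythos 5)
First, a point of comparison that matters here: the paper does not prove Theorem \ref{s09} at all. It is imported verbatim from the literature --- ``The large deviations principle for the empirical measure under the stationary state $\nu^N_{\alpha,\beta}$, stated below, is taken from \cite{bg2, f1}'' --- so there is no internal proof to measure your argument against. What you have written is, in outline, precisely the strategy of those cited works (Bodineau--Giacomin's paper is literally titled ``From Dynamic to Static Large Deviations\dots''): derive the static principle from the dynamical one of Theorem \ref{s08} together with the quasi-potential \eqref{08}, \`a la Freidlin--Wentzell. So your choice of route is the correct and, in fact, the canonical one.

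As a proof, however, your text is a plan rather than an argument: every step you flag as ``delicate'' is exactly where the entire content of \cite{bg2, f1} lives, and none of it is carried out. Concretely: (i) the dynamical LDP of Theorem \ref{s08} is stated for deterministic initial configurations associated to a \emph{fixed} profile $\gamma$, so neither your lower bound (after conditioning on $\pi^N(\eta^N(0))\in B_\delta(\bar\rho)$, the initial data is random and only \emph{close} to $\bar\rho$) nor your upper bound can invoke it directly; one needs a version uniform over initial configurations, plus a ``connection lemma'' showing nearby profiles can be joined at small cost --- and note you cannot appeal to Theorem \ref{s10} to start from $\nu^N_{\alpha,\beta}$, since in this paper its proof \emph{uses} Theorem \ref{s09}, which would be circular. (ii) The uniform relaxation estimate --- that any trajectory avoiding a neighborhood of $\bar\rho$ for time $T$ has cost at least $\kappa(T)\to\infty$, uniformly over starting profiles in the compact space $\ms M$ --- is asserted, not proved; it does not follow formally from compactness of the level sets of $I_{[0,T]}$ plus convergence of solutions of \eqref{09}, because one must rule out trajectories of uniformly bounded cost that linger far from $\bar\rho$ on growing time horizons, which requires a quantitative argument (e.g.\ via the relative entropy bound on $V$ or a Lyapunov-type estimate). (iii) The ``gluing argument'' in the upper bound, matching the cost of the excursion from $B$ to $\ms C$ with $\inf_{\ms C} V - o_\delta(1)$, likewise needs the continuity of the quasi-potential near $\bar\rho$. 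Your sketch correctly names all three obstacles, but naming them is where the proof begins, not where it ends; as it stands the proposal establishes the theorem only modulo the hard lemmas of the very references the paper cites.
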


\subsection*{Expectation of hitting times}

The main result of this section can now be stated. Fix an open subset
$\ms O$ of $\ms M$ and let 
\begin{equation*}
A_N \;=\; (\pi^N)^{-1}(\ms O) = \{\eta\in\Omega_N : 
\pi^N(\eta)\in \ms O\}\;,
\end{equation*}
and let $H_N = H_{A_N}$ be the hitting time of the set $A_N$. Note
that $H_N$ coincides with the hitting time $H_{\ms O}$ introduced in
Theorem \ref{s16}.

\begin{theorem}
\label{s06}
Fix an open subset $\ms O$ of $\ms M$. Assume that $T^{\rm mix}_N
\ll \exp\{aN\}$ for all $a>0$, that $H_N/\bb E_{\nu^N_{\alpha, \beta}}
[H_N]$ converges in distribution to a mean one exponential random
variable, and that
\begin{equation*}
V(\ms O) \;:=\; \inf_{\gamma \in \ms O} V(\gamma) 
\;=\; \inf_{\gamma \in \overline{\ms O}} V(\gamma)\;.
\end{equation*}
Then, for every $\epsilon>0$,
\begin{equation*}
\liminf_{N\to\infty} \frac {\bb E_{\nu^N_{\alpha, \beta}} [H_N]}
{e^{N \{ V(\ms O) - \epsilon\}}} \;>\; 0\;, \quad 
\limsup_{N\to\infty} 
\frac {\bb E_{\nu^N_{\alpha, \beta}}[H_N]}
{e^{N [V(\ms O) + \epsilon]}}
\;<\; \infty \;.
\end{equation*}
In particular,
\begin{equation*}
\lim_{N\to\infty} \frac 1N \log 
\bb E_{\nu^N_{\alpha, \beta}}[H_N] \;=\; V(\ms O)\;.
\end{equation*}
\end{theorem}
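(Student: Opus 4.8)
The plan is to prove that $\frac1N\log\theta_N\to V(\ms O)$, where $\theta_N$ is the sequence defined in \eqref{02}, and then to transfer this to $\bb E_{\nu^N_{\alpha,\beta}}[H_N]$. Since by hypothesis $H_N/\bb E_{\nu^N_{\alpha,\beta}}[H_N]$ converges to a mean-one exponential, the tail $\bb P_{\nu^N_{\alpha,\beta}}[H_N> s\,\bb E_{\nu^N_{\alpha,\beta}}[H_N]]\to e^{-s}$ pointwise; as these tails are monotone in $s$ and the limit is continuous, the convergence is uniform on compacts, and evaluating near $s=1$ gives $\bb E_{\nu^N_{\alpha,\beta}}[H_N]/\theta_N\to 1$. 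Hence it suffices to sandwich $\theta_N$ between $e^{N(V(\ms O)-\epsilon)}$ and $e^{N(V(\ms O)+\epsilon)}$, which yields at once the two displayed inequalities and the logarithmic limit.

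For the lower bound on $\theta_N$ I would start from \eqref{01}, which gives $\bb P_{\nu^N_{\alpha,\beta}}[H_N\le t]\le\nu^N_{\alpha,\beta}(A_N)+t\,r_N(A_N^c,A_N)$, and control both terms by the static large deviations principle (Theorem \ref{s09}). First, $\nu^N_{\alpha,\beta}(A_N)=\nu^N_{\alpha,\beta}(\pi^N\in\ms O)\le\nu^N_{\alpha,\beta}(\pi^N\in\overline{\ms O})\le e^{-N(V(\overline{\ms O})-o(1))}$. Second, every $\xi\in A_N^c$ with $R_N(\xi,A_N)>0$ differs from a configuration in $A_N$ by a single jump, so $\pi^N(\xi)$ lies within $O(1/N)$ of $\overline{\ms O}$; as the jump rates are bounded and each configuration has $O(N)$ neighbours, $R_N(\xi,A_N)\le CN$, and summing over such $\xi$ while applying Theorem \ref{s09} to a fixed closed neighbourhood of $\overline{\ms O}$ gives $r_N(A_N^c,A_N)\le CN\,e^{-N(V(\overline{\ms O})-\delta)}$ for any $\delta>0$ and $N$ large, the neighbourhood being shrunk to $\overline{\ms O}$ using the lower semicontinuity and compact level sets of $V$. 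Invoking the hypothesis $V(\overline{\ms O})=V(\ms O)$ and taking $t=e^{N(V(\ms O)-\epsilon)}$ with $\delta<\epsilon$, both terms vanish, so $\bb P_{\nu^N_{\alpha,\beta}}[H_N>t]\to1>e^{-1}$ and $\theta_N>e^{N(V(\ms O)-\epsilon)}$ for $N$ large.

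For the upper bound on $\theta_N$ I would use a renewal over macroscopic windows together with the dynamical large deviations principle. Fix $T>0$ and, using the definition \eqref{08} of the quasi-potential and the smooth approximation of Lemma \ref{s11}, choose a trajectory on $[0,T]$ of rate at most $V(\ms O)+\epsilon/2$ that starts at $\bar\rho$ and enters $\ms O$; the set of paths visiting the open set $\ms O$ is open, so the lower bound of Theorem \ref{s08} yields, for configurations whose empirical measure is near $\bar\rho$, $\bb P_\eta[H_N\le TN^2]\ge p_N$ with $p_N=e^{-N(V(\ms O)+\epsilon)}$ for $N$ large. Writing $\tau=TN^2$, $a_k=\bb P_{\nu^N_{\alpha,\beta}}[H_N>k\tau]$ and $G=\{\eta:d(\pi^N(\eta),\bar\rho)<\delta_0\}$, which satisfies $\nu^N_{\alpha,\beta}(G)\to1$ by the convergence of $\pi^N$ to $\bar\rho$ established in \cite{els1}, the Markov property at time $k\tau$ and stationarity give $a_{k+1}\le(1-p_N)a_k+p_N\delta_N$ with $\delta_N=1-\nu^N_{\alpha,\beta}(G)\to0$. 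Iterating gives $a_k\le(1-p_N)^k+\delta_N$, so $k=\lceil 2/p_N\rceil$ forces $a_k<e^{-1}$ for $N$ large, whence $\theta_N\le\lceil 2/p_N\rceil\,TN^2$; since $N^2$ is subexponential, $\frac1N\log\theta_N\le V(\ms O)+\epsilon$, and as $\epsilon$ is arbitrary this closes the estimate.

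The delicate point is the one-window lower bound \emph{uniformly} over the good set $G$: Theorem \ref{s08} is stated for a fixed sequence associated to a single profile, whereas here the starting configuration ranges over all of $G$. I expect to resolve this through the zero-cost relaxation of the hydrodynamic equation \eqref{09}: from any $\gamma$ with $d(\gamma,\bar\rho)<\delta_0$ one first follows the solution of \eqref{09}, which reaches a small neighbourhood of $\bar\rho$ in bounded macroscopic time at no rate cost, and then concatenates a shift of the optimal escape path. Making this concatenation quantitative, so that the total rate stays below $V(\ms O)+\epsilon/2$ uniformly for $\delta_0$ small and the lower bound of Theorem \ref{s08} applies simultaneously to all initial conditions in $G$, is the main technical obstacle; the remainder reduces to the two large deviations principles and the elementary recursion above.
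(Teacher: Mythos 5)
Your overall architecture is sound, and one half of it is actually correct and \emph{more elementary} than the paper's argument. The reduction $\bb E_{\nu^N_{\alpha,\beta}}[H_N]/\theta_N\to 1$ from the assumed exponential convergence is fine, and your lower bound on $\theta_N$ works: starting from \eqref{01}, bounding $\nu^N_{\alpha,\beta}(A_N)$ by the static LDP for the closed set $\overline{\ms O}$, and bounding $r_N(A_N^c,A_N)\le N\,\nu^N_{\alpha,\beta}(\partial A_N)/\nu^N_{\alpha,\beta}(A_N^c)$ with $\partial A_N\subset\{\pi^N\in\ms O_\delta\}$ for $N$ large, then letting $\delta\downarrow 0$ via lower semicontinuity and compact level sets of $V$ --- this is exactly the mechanism of the paper's Lemma \ref{s12}, sharpened to the optimal exponent, and combined with the hypothesis $V(\ms O)=V(\overline{\ms O})$ it gives $\theta_N\ge e^{N(V(\ms O)-\epsilon)}$. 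The paper instead obtains this half from the stationary-start path-space LDP (Theorem \ref{s10}) together with Lemmas \ref{s11} and \ref{s07}; your route avoids all of that machinery and uses only the static LDP. That is a genuine simplification for this direction.

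The gap is where you say it is, and it is not a minor technicality: the upper bound on $\theta_N$ rests entirely on the \emph{uniform} one-window estimate $\inf_{\eta\in G}\bb P_\eta[H_N\le TN^2]\ge e^{-N(V(\ms O)+\epsilon)}$, and nothing in the paper (nor in your sketch) delivers it. Theorem \ref{s08} is a statement for a sequence of configurations associated to a \emph{single} profile; upgrading it to uniformity over $G$ by your relaxation-plus-concatenation scheme requires a quantitative lemma asserting that from \emph{every} profile in a neighbourhood of $\bar\rho$ one can reach $\ms O$ at cost at most $V(\ms O)+\epsilon/2$. Since the hydrodynamic flow \eqref{09} only reaches $\bar\rho$ asymptotically, this amounts to continuity of the escape cost in the initial profile near $\bar\rho$, a nontrivial estimate on the rate functional that would itself need a proof. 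The telltale sign that something is missing is that your argument never uses the hypothesis $T^{\rm mix}_N\ll e^{aN}$: the paper's proof uses it precisely to bypass the uniformity problem. From an \emph{arbitrary} $\xi\in\Omega_N$ one runs the chain for time $\ell N\, T^{\rm mix}_N$ (subexponential by that hypothesis), so that its law is within $2^{-\ell N}$ of $\nu^N_{\alpha,\beta}$ in total variation, and then applies the lower bound of Theorem \ref{s10}, whose proof absorbs the random initial condition by prolonging the path backwards to $\bar\rho$ and using that $\pi^N_0\to\bar\rho$ under $\nu^N_{\alpha,\beta}$. This yields $\inf_{\xi\in\Omega_N}\bb P_\xi\big[H_N\le \ell N\,T^{\rm mix}_N+T_\epsilon N^2\big]\ge e^{-N(V(\ms O)+2\epsilon)}-2^{-\ell N}$, i.e.\ uniformity over \emph{all} states rather than over a good set $G$, after which a renewal iteration like yours (without the $\delta_N$ bookkeeping) closes the bound. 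To complete your proof you should either import this mixing-time device, or supply the missing continuity-of-cost lemma; as written, the second displayed inequality of the theorem is unproven.
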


To prove this result we first need a dynamical large deviations
principle starting from the stationary measure. 

\begin{theorem}
\label{s10}
For each $T>0$, each closed set $\mf C \subset D([0,T], \ms
M)$ and each open set $\mf O \subset D([0,T], \ms M)$,
\begin{eqnarray*}
&& 
\limsup_{N\to\infty} \frac 1N \log \bb P^N_{\nu^N_{\alpha, \beta}} 
\big( \pi^N (N^2 \,\cdot\,)\in \mf C\big)
\;\leq\; - \inf_{u \in \mf C} \big\{ I_{[0,T]} (u | u_0)  + V(u_0) \big\}
\\
&& \qquad 
\liminf_{N\to\infty} \frac 1N \log \bb P^N_{\nu^N_{\alpha, \beta}} 
\big( \pi^N (N^2 \,\cdot\,) \in \mf O \big) \;\geq\; -  \inf_{u \in \mf O} 
\big\{ I_{[0,T]} (u | u_0)  + V(u_0) \big\}\; . 
\end{eqnarray*}
\end{theorem}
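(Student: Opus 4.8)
The plan is to combine the dynamical large deviations principle from the fixed initial profile (Theorem \ref{s08}) with the static large deviations principle for the stationary measure (Theorem \ref{s09}) through a conditioning argument. The statement we want is exactly the large deviations principle for the process $\pi^N(N^2\,\cdot\,)$ started from $\nu^N_{\alpha,\beta}$, and the rate function we expect, $I_{[0,T]}(u\mid u_0) + V(u_0)$, is precisely the sum of the cost of producing the initial profile $u_0$ (governed by $V$) plus the dynamical cost of following the trajectory $u$ starting from $u_0$ (governed by $I_{[0,T]}$). This is the standard "contraction plus independence" heuristic: the initial distribution contributes $V(u_0)$ and, conditioned on the initial profile, the trajectory contributes $I_{[0,T]}(u\mid u_0)$.

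For the upper bound I would write, for a closed set $\mf C$,
\begin{equation*}
\bb P^N_{\nu^N_{\alpha,\beta}}\big(\pi^N(N^2\,\cdot\,)\in\mf C\big)
\;=\; \sum_{\eta\in\Omega_N} \nu^N_{\alpha,\beta}(\eta)\,
\bb P^N_\eta\big(\pi^N(N^2\,\cdot\,)\in\mf C\big)\;,
\end{equation*}
and then partition $\ms M$ into small balls $\{B_i\}$ of radius $\delta$ in the metric $d$ of \eqref{24}. On each ball the initial profile is essentially constant, so one uses Theorem \ref{s09} to bound $\nu^N_{\alpha,\beta}(\pi^N\in B_i)\le \exp\{-N(\inf_{B_i}V-o(1))\}$ and the uniform (over associated sequences) upper bound of Theorem \ref{s08} to bound the conditional probability by $\exp\{-N(\inf_{u\in\mf C,\,u_0\in B_i}I_{[0,T]}(u\mid u_0)-o(1))\}$. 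Summing over the finitely many balls, applying the standard $\log\sum\approx\max$ estimate, and letting $\delta\downarrow 0$ using the lower semicontinuity of $I_{[0,T]}$ and $V$ (Lemma \ref{s11} and Theorem \ref{s09}) yields the claimed upper bound. For the lower bound I would fix an open $\mf O$ and a near-optimal trajectory $u$ with $u_0=\gamma$, choose an open neighborhood of $\gamma$ on which $V$ is close to $V(\gamma)$ and restrict attention to those $\eta$ with $\pi^N(\eta)$ in that neighborhood; the static lower bound gives the mass $\exp\{-N(V(\gamma)+o(1))\}$ of such configurations, and the dynamical lower bound of Theorem \ref{s08} gives the conditional probability $\exp\{-N(I_{[0,T]}(u\mid\gamma)+o(1))\}$, producing the matching lower bound.

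The main obstacle is the \emph{uniformity} of the dynamical large deviations estimates with respect to the initial configuration. Theorem \ref{s08} is stated for a single sequence $\{\eta^N\}$ associated to a fixed profile $\gamma$, whereas here the initial configuration is random and ranges over all $\eta$ whose empirical profile lies in a ball $B_i$. I would therefore need the upper bound of the dynamical principle to hold uniformly over all sequences associated to profiles in a fixed compact set, and the rate function $I_{[0,T]}(u\mid\gamma)$ to be suitably continuous (or at least lower/upper semicontinuous) jointly in $(u,\gamma)$ so that replacing $\gamma$ by a nearby profile changes the cost by $o(1)$ as $\delta\downarrow 0$. The first property is typically built into the proof of the dynamical large deviations principle (the superexponential estimates and the tightness bounds are uniform over initial configurations in the relevant energy class), and the joint continuity in the initial datum follows from the approximation scheme of Lemma \ref{s11} together with the explicit dependence of $\hat J_H(u\mid\gamma)$ on $\gamma$ only through the single term $\langle\gamma,H_0\rangle$ in \eqref{o01}. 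Handling this joint control carefully, and patching the finitely many balls without losing the exponential rate, is the technical heart of the argument; the rest is the routine bookkeeping of combining two large deviations principles.
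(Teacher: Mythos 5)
Your upper bound can in fact be pushed through, and the uniformity you worry about is essentially automatic: since $\ms M$ is compact, if $\eta^N$ nearly achieves the supremum of $\bb P^N_\eta\big(\pi^N(N^2\,\cdot\,)\in\mf C\big)$ over $\{\eta : \pi^N(\eta)\in B_i\}$, then along a subsequence $\pi^N(\eta^N)$ converges to some $\gamma^*\in\overline{B_i}$, so Theorem \ref{s08} applies along that subsequence with rate $\inf_{u\in\mf C}I_{[0,T]}(u|\gamma^*)$; joint lower semicontinuity of $(u,\gamma)\mapsto \hat I_{[0,T]}(u|\gamma)=\sup_H \hat J_H(u|\gamma)$ (a supremum of jointly continuous functionals, which equals $I_{[0,T]}$ for the SSEP) then lets you send $\delta\downarrow 0$ after taking the maximum over the finitely many balls. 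Note that this is a different route from the paper's: there, the conditioning on the initial configuration is handled exactly by the exponential martingale $M^H_T$ --- since $\bb E_\eta[M^H_T]=1$, the dynamical part is absorbed and what remains is $E_{\nu^N_{\alpha,\beta}}\big[\exp N\sup_{u\in\ms K}\{-\hat J_H(u^\epsilon|\pi^N)\}\big]$, a continuous function of the initial empirical measure alone, to which Varadhan's lemma for Theorem \ref{s09} applies, followed by the minimax argument of \cite[Section 3.3]{blm1}.

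The genuine gap is in your lower bound. You fix a near-optimal $u\in\mf O$ with $u_0=\gamma$ and restrict to initial configurations whose empirical profile lies in a small ball $B$ around $\gamma$. But such configurations are associated (along subsequences) to profiles $\gamma'\in\overline{B}$ with $\gamma'\neq\gamma$ in general, and for those Theorem \ref{s08} produces the rate $\inf_{v\in\mf O}I_{[0,T]}(v|\gamma')$, not $I_{[0,T]}(u|\gamma)$: the path $u$ itself has $I_{[0,T]}(u|\gamma')=\infty$ because $u_0=\gamma\neq\gamma'$. To conclude you would need, for \emph{every} $\gamma'$ close to $\gamma$, a path $v\in\mf O$ with $I_{[0,T]}(v|\gamma')\le I_{[0,T]}(u|\gamma)+\epsilon$, i.e.\ an upper-semicontinuity (recovery sequence) property of the rate function in its initial datum. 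The continuity you invoke, through the term $\langle\gamma,H_0\rangle$ in \eqref{o01}, yields only \emph{lower} semicontinuity, which goes the wrong way; building recovery paths from profiles that are merely weakly close to $\gamma$ is a nontrivial approximation lemma that nothing in the paper provides. The paper sidesteps this entirely by exploiting stationarity: it enlarges the time window to $[-S,T]$, uses that under $\nu^N_{\alpha,\beta}$ the empirical density at the initial time converges to $\bar\rho$, and applies the dynamical lower bound only from $\bar\rho$, to the concatenation of a quasi-potential-minimizing path $\pi$ from $\bar\rho$ to $u_0$ (cost at most $V(u_0)+\epsilon$ by the very definition \eqref{08}) with $u$ itself; additivity of the rate functional over the two time intervals then gives exactly $V(u_0)+I_{[0,T]}(u|u_0)$. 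To complete your proof you must either establish the recovery lemma or switch to this stationarity argument for the lower bound.
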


\begin{proof}
In order to simplify the expressions, we will use the fact that
concerning the SSEP process, as mentioned in \cite[last part of
section 2]{blm1}, the two dynamical rate functionals
$I_{[0,T]}(u|\gamma)$ and $\hat I_{[0,T]}(u|\gamma)$ (see \eqref{26})
are the same.

We start with the proof of the upper bound.  The arguments closely
follow the ones presented in \cite{blm1}. Theorem \ref{s09} is used
afterwards to estimate the large deviations from the initial
stationary distribution.

It is well known that using an exponential tightness argument, it is
enough to prove the upper bound for compact sets.  For any function
$(t,x)\mapsto H_t(x)\in C_0^{1,2}([0,T]\times[0,1])$, we introduce the
exponential martingale $M^H_t$ defined by
\[
\begin{split}
M^H_t \;=\; \exp \Big\{ N \Big[ \<\pi^N_t, H_t\> 
&- \<\pi^N_0, H_0\> \\
&- \frac 1N \int_0^t e^{- N \<\pi^N_s, H_s\>} (\partial_s + L_N) 
\, e^{N\<\pi^N_s, H_s\>} \,ds \Big] \Big\}\; .  
\end{split}
\]
Using a super-exponential estimate (\cite[Theorem
3.2]{blm1}), for any $\delta>0$ and $\epsilon>0$, there exists a set of configurations
$\eta\in B^{H,N}_{\delta , \epsilon}$ such that for any $\delta>0$
\[
\lim_{\epsilon\to 0}\,\limsup_{N\to\infty} \frac 1N  \log
\bb  P^N_{\nu^N_{\alpha, \beta}} \Big [ \big(B^{H,N}_{\delta , \epsilon} \big)^\complement\Big] \; 
=\; -\infty \; .
\]
and on which
\begin{equation*}
M^H_T \;=\;  \exp N\Big\{  \hat J_{H} (\pi^{N,\epsilon}|\pi^N_0) 
\; +\; O_H (\epsilon) \;+\; O(\delta) \Big\} \;,
\end{equation*}
where the functional $\hat J_H$ was defined in \eqref{o01}, $O_{H}
(\epsilon)$ (resp. $O(\delta)$) is an deterministic expression which vanishes as
$\epsilon \downarrow 0$ (resp. $\delta \downarrow 0$) and where, for
any density $\pi\in \ms M$,
\[
\pi^\epsilon(u)=
\frac 1{2\epsilon}\int_{[u-\epsilon,u+\epsilon]\cap[0,1]}\!\!
\pi(u)\,du.
\]
Let $\ms K$ be a compact subset of $D([0,T], \ms M)$, then
\[
\begin{split}
 \limsup_{N\to\infty} \frac 1N  \log
\bb  P^N_{\nu^N_{\alpha, \beta}}
&\big[\pi^N\in\ms K\big]
\\
\le 
&\limsup_{\delta\downarrow 0}\limsup_{\epsilon\downarrow 0}
\limsup_{N\to\infty} \frac 1N  \log
\bb  P^N_{\nu^N_{\alpha, \beta}}
\big[\{\pi^N\in\ms K\}\cap B^{H,N}_{\delta , \epsilon}\big]  
\end{split}
\]
and we can write
\[
\bb P^N_{\nu^N_{\alpha, \beta}}
\big[\{\pi^N\in\ms K\}\cap B^{H,N}_{\delta , \epsilon}\big]
=\bb E^N_{\nu^N_{\alpha, \beta}}
\big[M^H_T (M^H_T )^{-1}\mb 1_{\{\pi^N\in\ms K\}\cap B^{H,N}_{\delta , \epsilon}}\big].
\]
Therefore,
\[
\begin{split}
 \frac 1N  \log\bb P^N_{\nu^N_{\alpha, \beta}}
&\big[\{\pi^N\in\ms K\}\cap B^{H,N}_{\delta , \epsilon}\big]\\
&\le \frac 1N  \log\bb E^N_{\nu^N_{\alpha, \beta}}
\Big[M^H_T  \exp N\sup_{u\in \ms K}\big\{  -\hat J_{H}
(u^\epsilon|\pi^N_0)\big \} \Big]+O_H(\epsilon)+O(\delta)
\end{split}
\]
and since $M^H_T$ is a mean $1$ martingale, we get
\[
\begin{split}
 \limsup_{N\to\infty} \frac 1N  \log\;
&\bb  P^N_{\nu^N_{\alpha, \beta}}
\big[\pi^N\in\ms K\big]
\\
\le 
&\limsup_{\epsilon\downarrow 0}
\limsup_{N\to\infty} \frac 1N  \log
E_{\nu^N_{\alpha, \beta}}
\Big[\exp N\sup_{u\in \ms K}\big\{  -\hat J_{H}
(u^\epsilon|\pi^N)\big \} \Big].
\end{split}
\]
We notice that the map $\pi\mapsto \sup_{u\in \ms K}\{ -\hat
J_{H} (u^\epsilon|\pi)\}$ is continuous on $\ms M$, so we can
apply Varadhan's Lemma to the large deviation principle stated in Theorem
\ref{s09}
\[
\begin{split}  
\lim_{N\to\infty} \frac 1N  
&\log
E_{\nu^N_{\alpha, \beta}}
\Big[\exp N\sup_{u\in \ms K}\big\{  -\hat J_{H}
(u^\epsilon|\pi^N)\big \} \Big]\\
&=\sup_{\gamma\in\ms M}\big\{\sup_{u\in \ms K}\{  -\hat J_{H}
(u^\epsilon|\gamma)\}-V(\gamma)\big\}
=-\inf_{\gamma\in\ms M, u\in\ms K}\{  \hat J_{H}
(u^\epsilon|\gamma)+V(\gamma)\}.
\end{split}
\]
Now, since $\ms M\times\ms K$ is compact, we can follow step by step
the arguments of \cite[section 3.3]{blm1} and we get
\[
 \limsup_{N\to\infty} \frac 1N  \log
\bb  P^N_{\nu^N_{\alpha, \beta}}
\big[\pi^N\in\ms K\big]
\le -\inf_{\gamma\in\ms M, u\in\ms K}\{I_{[0,T]}(u,\gamma)+V(\gamma)\},
\]
which is precisely the required upper bound since
$I_{[0,T]}(u,\gamma)<+\infty$ implies that $u_0=\gamma$.

The proof of the lower bound is easier. Indeed recalling the
definition of the rate function $V$, we only have to show that for any
$u\in D([0,T],\ms M)$, any $S>0$, any $\pi\in D([-S,0],\ms M)$ such that
$\pi_{-S}=\bar \rho$ and $\pi_{0}=u_0$, and for any $\delta>0$,
\[
 \liminf_{N\to\infty} \frac 1N  \log
\bb  P^N_{\nu^N_{\alpha, \beta}}
\big[\pi^N\in B_{[0,T]}(u,\delta)\big]
\ge -I_{[-S,0]}(\pi|\bar \rho)-I_{[0,T]}(u|u_0),
\]
where $B_{[0,T]}(u,\delta)$ is the ball centered at $u$ with radius
$\delta$ for the Skorohod topology on $D([0,T],\ms M)$.
If we denote by $\tilde u$ the density path given by
$\pi$ on $[-S,0]$ and $u$ on $[0,T]$, then $\tilde u\in D([-S,T],\ms
M)$ and $I_{[-S,T]}(\tilde u)=I_{[-S,0]}(\pi|\bar
\rho)+I_{[0,T]}(u|u_0)$.
Therefore, since $\nu^N_{\alpha, \beta}$ is a stationary
distribution, we have 
\[
\bb P^N_{\nu^N_{\alpha, \beta}}
\big[\pi^N\in B_{[0,T]}(u,\delta)\big]
\ge \bb P^N_{\nu^N_{\alpha, \beta}}
\big[\pi^N\in B_{[-S,T]}(\tilde u,\delta)\big].
\]
As under $\nu^N_{\alpha, \beta}$ the initial empirical density
$\pi^N_0$ converges to the stationary density $\bar\rho$, the lower
bound proved in \cite{blm1} applies here and we get
\[
 \liminf_{N\to\infty} \frac 1N  \log
\bb  P^N_{\nu^N_{\alpha, \beta}}
\big[\pi^N\in B_{[-S,T]}(u,\delta)\big]
\ge -I_{[-S,T]}(\tilde u|\bar\rho).
\]
\end{proof}

Next lemma is also needed in the proof of Theorem \ref{s06}.

\begin{lemma}
\label{s07}
Fix a subset $\ms B$ of $\ms M$ and $T>0$. Let $\mf A = \{u \in
C([0,T], \ms M): u(t) \in \ms B$ for some $0\le t\le T\}$. Then,
\begin{equation*}
\inf_{u\in \mf A} \big\{ I_{[0,T]} (u | u_0) + V(u_0)\} \;\ge\;
\inf_{\rho\in\ms B} V(\rho)\;.
\end{equation*}
\end{lemma}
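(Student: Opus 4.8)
The plan is to localize the path at the instant it meets $\ms B$ and to compare the cost accumulated up to that instant with the quasi-potential, exploiting the additive (Markov) structure of the dynamical rate functional together with the concatenation identity already built into \eqref{o01}. If $\inf_{u\in\mf A}\{I_{[0,T]}(u|u_0)+V(u_0)\}=+\infty$ there is nothing to prove, so fix $u\in\mf A$ with $I_{[0,T]}(u|u_0)<\infty$. By Lemma \ref{s11} such a $u$ is continuous in time, consistently with the definition of $\mf A$, and there is a time $t^*\in[0,T]$ with $\rho:=u(t^*)\in\ms B$. It suffices to show that $I_{[0,T]}(u|u_0)+V(u_0)\ge V(\rho)$, since then the right-hand side is at least $\inf_{\gamma\in\ms B}V(\gamma)$, and taking the infimum over $u\in\mf A$ gives the lemma. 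The case $t^*=0$ is immediate, because then $\rho=u_0$, $V(\rho)=V(u_0)$ and $I_{[0,T]}(u|u_0)\ge0$; so I assume $t^*>0$.

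The heart of the argument is a concatenation estimate. Fix $\delta>0$ and, by the definition \eqref{08} of the quasi-potential, choose $S>0$ and $w\in D([-S,0],\ms M)$ with $w(-S)=\bar\rho$, $w(0)=u_0$ and $I_{[-S,0]}(w|\bar\rho)\le V(u_0)+\delta$. Let $v\in D([-S,t^*],\ms M)$ be equal to $w$ on $[-S,0]$ and to $u$ on $[0,t^*]$; it is well defined and continuous at the junction since $w(0)=u_0=u(0)$, and $v(-S)=\bar\rho$, $v(t^*)=\rho$. Both $w$ and the restriction of $u$ have finite energy, so $\mc Q(v)<\infty$ by additivity of $\mc Q$, whence $I=\hat I$ for $v$ by \eqref{26}. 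Writing out $\hat J_H(v|\bar\rho)$ from \eqref{o01} and splitting every time integral at $t=0$, the boundary contributions $\langle v_0,H_0\rangle$ telescope, so that for each test function $H$ one has $\hat J_H(v|\bar\rho)=\hat J_H(w|\bar\rho)+\hat J_H(u|u_0)$ over the respective intervals; taking suprema yields the subadditive bound $I_{[-S,t^*]}(v|\bar\rho)\le I_{[-S,0]}(w|\bar\rho)+I_{[0,t^*]}(u|u_0)$. Since $v$ joins $\bar\rho$ to $\rho$, the definition \eqref{08} of $V$, after the time shift carrying $[-S,t^*]$ to $[-(S+t^*),0]$ and using that $I$ is invariant under time translation (the generator being time-homogeneous), gives $V(\rho)\le V(u_0)+\delta+I_{[0,t^*]}(u|u_0)$; letting $\delta\downarrow0$ produces $V(\rho)\le V(u_0)+I_{[0,t^*]}(u|u_0)$.

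It remains to dominate the restricted cost by the full cost, that is $I_{[0,t^*]}(u|u_0)\le I_{[0,T]}(u|u_0)$. This is the additivity identity $I_{[0,T]}(u|u_0)=I_{[0,t^*]}(u|u_0)+I_{[t^*,T]}(u|u_{t^*})$ together with $I_{[t^*,T]}\ge0$. For trajectories that are smooth and bounded away from the densities $0$ and $1$ the functional is given by an explicit integral whose integrand is nonnegative and which is manifestly additive over the splitting $[0,T]=[0,t^*]\cup[t^*,T]$, so the restriction bound is immediate there; the general case follows by approximating $u$ with the smooth trajectories $u^n$ furnished by Lemma \ref{s11}, with $I_{[0,T]}(u^n|u^n_0)\to I_{[0,T]}(u|u_0)$, and controlling the restrictions $u^n|_{[0,t^*]}$ in the passage to the limit. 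Combining this with the previous paragraph gives $V(\rho)\le V(u_0)+I_{[0,T]}(u|u_0)$, which is what was needed.

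The main obstacle is precisely the restriction bound of the last paragraph. The subadditive (concatenation) inequality costs nothing, as it reduces to the telescoping of the boundary terms in $\hat J_H$; the delicate point is the reverse, superadditive direction, equivalently the lower-semicontinuous extension of the additive structure from smooth paths to arbitrary finite-cost paths, and it is there that the smooth approximation of Lemma \ref{s11} must be invoked with care so that the convergence survives restriction to the subinterval $[0,t^*]$.
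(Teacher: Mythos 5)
Your proof is correct and is essentially the paper's own argument: concatenate a near-optimal path realizing $V(u_0)$ (from the definition \eqref{08} of the quasi-potential) with $u$ restricted to $[0,t^*]$, bound the cost of the concatenated path below by $\inf_{\rho\in\ms B}V(\rho)$ again via \eqref{08}, and finish with the restriction bound $I_{[0,t^*]}(u\,|\,u_0)\le I_{[0,T]}(u\,|\,u_0)$. The only difference is one of detail: the paper asserts the concatenation additivity and the restriction monotonicity without comment, whereas you justify the former by telescoping the boundary terms of $\hat J_H$ and the latter by an approximation argument based on Lemma \ref{s11}.
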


\begin{proof}
Fix $\epsilon >0$ and $u\in \mf A$. Assume that $u(t_0) \in\ms B$,
$0\le t_0 \le T$. By \eqref{08}, there exists $T_0>0$ and a path $v\in
C([-T_0, 0], \ms M)$ such that $v(-T_0) = \bar\rho$, $v(0) = u(0) = u_0$,
$I_{[-T_0, 0]}(v|\bar\rho) \le V(u_0) + \epsilon$. Defining the path
$w$ in $C [-T_0, t_0], \ms M)$ by $w(t) = v(t)$, $-T_0 \le t\le 0$,
$w(t) = u(t)$, $0 \le t\le t_0$, we obtain a path connecting
$\bar\rho$ to $u(t_0) \in\ms B$. By \eqref{08}, $I_{[-T_0,t_0]}
(w|\bar\rho) \ge \inf_{\rho\in\ms B} V(\rho)$. It follows from the
estimates just obtained that
\begin{equation*}
\begin{split}
& I_{[0,T]} (u | u_0) + V(u_0) \;\ge\; I_{[0,t_0]} (u | u_0) + V(u_0) \\
& \quad \;=\; I_{[-T_0,t_0]} (w|\bar\rho) + V(u_0) - I_{[-T_0,0]} (v|\bar\rho)
\;\ge\; \inf_{\rho\in\ms B} V(\rho) \;-\; \epsilon\;,  
\end{split}
\end{equation*}
which proves the lemma.
\end{proof}

\begin{proof}[Proof of Theorem \ref{s06}]
Fix $\epsilon>0$. There exists $\gamma\in \ms O$ such that
\begin{equation*}
V(\gamma) \;<\; \inf_{\rho\in\ms O} V(\rho) \;+\; (\epsilon/2) \;,
\end{equation*}
and there exists $\delta$ such that $B_\delta(\gamma) \subset \ms O$.
By \eqref{08} and by translation invariance of the dynamical rate
function, there exist $T_\epsilon>0$ and a path $u^{(\epsilon)} (t)$,
$0\le t\le T_\epsilon $, $u^{(\epsilon)}_0 = \bar\rho$,
$u^{(\epsilon)}(T_\epsilon)=\gamma$ such that
\begin{equation}
\label{11}
I_{[0, T_\epsilon]} (u^{(\epsilon)} | \bar\rho ) \;<\; \inf_{\rho\in\ms O} V(\rho) 
\;+\; \epsilon\;.
\end{equation}

For $\varphi>0$, $T>0$ and a path $u\in D([0,T], \ms M)$ denote by
$\bb B_{\varphi, T}(u)$ the open ball in $D([0,T], \ms M)$ of radius
$\varphi$ centered around $u$. Let $G = \bb B_{\delta,
  T_\epsilon}(u^{(\epsilon)})$, $G_{L} = \{ u \in D([0, L \, T^{\rm mix}_N
/ N^2 + T_\epsilon], \ms M) : u(L \, T^{\rm mix}_N /N^2 +
\,\cdot\,) \in G\}$ It is clear from the definition of $G$ that $G_{L}
\subset \{H_N \le L \, T^{\rm mix}_N + T_\epsilon N^2\}$. Hence, for
any configuration $\xi\in\Omega_N$,
\begin{equation*}
\bb P_\xi \big[ H_N \le L \, T^{\rm mix}_N + T_\epsilon N^2 \big]
\;\ge\; \bb P_\xi \big[ \pi^N \in G_L \big]
\;=\; \sum_{\zeta\in\Omega_N} P_{L \, T^{\rm mix}_N}(\xi,\zeta) \,
\bb P_\zeta \big[ \pi^N \in G \big]\;,
\end{equation*}
where $P_t(\eta,\xi)$, $t>0$, stands for the transition probability of
the BDSSEP.  By definition of the mixing time, the previous expression
is bounded below by
\begin{equation*}
\bb P_{\nu^N_{\alpha, \beta}} \big[ \pi^N \in G \big] \;-\; 2^{-L}\;. 
\end{equation*}
Therefore, for every $L\ge 1$,
\begin{equation}
\label{12}
\inf_{\xi \in \Omega_N}
\bb P_\xi \big[ H_N \le L \, T^{\rm mix}_N + T_\epsilon N^2 \big]
\;\ge\; \bb P_{\nu^N_{\alpha, \beta}} \big[ \pi^N \in G \big] \;-\; 2^{-L}\;. 
\end{equation}

By Theorem \ref{s10}, by definition of $G$, by \eqref{11} and since
$u^{(\epsilon)}_0 = \bar\rho$, $V(\bar\rho)=0$,
\begin{equation*}
\begin{split}
\liminf_{N\to\infty} \frac 1N \log \bb P_{\nu^N_{\alpha, \beta}} 
\big[ \pi^N \in G\big] \;& \ge\; -\inf_{u\in G} 
\big\{ I_{[0,T_\epsilon]} (u | u_0 ) + V(u_0)\big\} \\
\; & \ge\; - \, \{I_{[0, T_\epsilon]} (u^{(\epsilon)} | u^{(\epsilon)}_0) 
+ V(u^{(\epsilon)}_0) \big\}
\;\ge\; - (V(\ms O) + \epsilon)\;.    
\end{split}
\end{equation*}
Hence, there exists $N_0 = N_0(\epsilon, \delta)$ such that for all
$N\ge N_0$,
\begin{equation*}
\bb P_{\nu^N_{\alpha, \beta}} 
\big[ \pi^N \in G\big] \; \ge\; \exp - N 
\big \{ V(\ms O) + 2 \epsilon \big\}\;.
\end{equation*}

The previous estimate together with \eqref{12} for $L=\ell\, N$ gives
that for all $N\ge N_0$,
\begin{equation*}
\max_{\xi \in \Omega_N}
\bb P_\xi \big[ H_N > \ell\, N \, T^{\rm mix}_N + T_\epsilon N^2 \big]
\;\le\; 1 \;-\; e^{-N [V(\ms O) + 2 \epsilon]} \;+\; 2^{-\ell\, N}\;. 
\end{equation*}
Iterating this estimate $M$ times, gives by the Markov property that
\begin{equation*}
\max_{\xi \in \Omega_N}
\bb P_\xi \Big[ H_N > M \big\{\ell\, N \, T^{\rm mix}_N 
+ T_\epsilon N^2\big\}\, \Big]
\;\le\; \Big(1 \;-\; e^{-N [V(\ms O) + 2 \epsilon]} \;+\; 2^{-\ell\,
  N}\Big)^M \;. 
\end{equation*}
Taking $\ell$ large enough and setting $M= \exp\{N [V(\ms O) + 2
\epsilon]\}$, we conclude that
\begin{equation*}
\limsup_N \max_{\xi \in \Omega_N}
\bb P_\xi \Big[ H_N >  e^{N [V(\ms O) + 2 \epsilon]} \,
\big\{\ell\, N \, T^{\rm mix}_N + T_\epsilon N^2\big\}\, \Big]
\;<\; 1\;. 
\end{equation*}
Since, by assumption, $N \, T^{\rm mix}_N < \exp\{\epsilon N\}$ for
$N$ sufficiently large and since we assumed that $H_N/\bb
E_{\nu^N_{\alpha, \beta}}[H_N]$ converges to a mean one exponential
random variable, we have that
\begin{equation*}
\liminf_{N\to\infty} \frac{e^{N [V(\ms O) + 3 \epsilon]}}
{\bb E_{\nu^N_{\alpha, \beta}}[H_N]} \;>\; 0\;.
\end{equation*}

Conversely, for $k\ge 0$, let $\mf A_k = \{u \in D([k T_\epsilon,
(k+1) T_\epsilon], \ms M) : u(t)\in\ms O$ for some $k T_\epsilon \le
t\le (k+1) T_\epsilon\}$. By definition, for every $L\ge 1$
\begin{equation*}
\bb P_{\nu^N_{\alpha, \beta}} \big[ H_N\le L\, N^2\, T_\epsilon \big]
\;\le\; \sum_{k=0}^{L-1} \bb P_{\nu^N_{\alpha, \beta}} 
\big[ \pi^N \in \mf A_k \big] \;\le \; 
L\, \bb P_{\nu^N_{\alpha, \beta}} \big[ \pi^N
\in \overline{\mf A} \big]\;, 
\end{equation*}
where $\mf A = \mf A_0$ and $\overline{\mf A}$ stands for the closure
of $\mf A$.

By Theorem \ref{s10},
\begin{equation*}
\limsup_{N\to\infty} \frac 1N \log \bb P_{\nu^N_{\alpha, \beta}} 
\big[ \pi^N  \in \overline{\mf A} \big] \; \le\; 
-\inf_{u\in \overline{\mf A}} 
\big\{ I_{[0,T_\epsilon]} (u | u_0 ) + V(u_0)\big\}\;.
\end{equation*}
By Lemma \ref{s11}, we may restrict the supremum to paths $u$ in
$C([0, T_\epsilon], \ms M)$. In this case, $\overline{\mf A}$ is
contained on the closed set $\mf A' = \{u \in C([0, T_\epsilon], \ms M)
: u(t)\in\overline{\ms O}$ for some $0 \le t\le T_\epsilon\}$, so that
\begin{equation*}
\limsup_{N\to\infty} \frac 1N \log \bb P_{\nu^N_{\alpha, \beta}} 
\big[ \pi^N  \in \overline{\mf A} \big] \; \le\; 
-\inf_{u\in \mf A'}  \big\{ I_{[0,T_\epsilon]} (u | u_0 ) + V(u_0)\big\}\;.
\end{equation*}
By Lemma \ref{s07}, $\inf_{u\in \mf A'}  \big\{ I_{[0,T_\epsilon]} (u | u_0 )
+ V(u_0)\big\} \ge \inf_{\rho\in \overline{\ms O}} V(\rho)$ and this
latter quantity is by assumption equal to $\inf_{\rho\in \ms O}
V(\rho)$. Hence, there exists $N_0$ such that for all $N\ge N_0$, 
\begin{equation*}
\bb P_{\nu^N_{\alpha, \beta}} 
\big[ \pi^N  \in \overline{\mf A} \big] \; \le\; 
\exp - N \big\{ \inf_{\rho\in \ms O} V(\rho) - \epsilon \big\} \;.
\end{equation*}

Taking $L= (1/2) \exp N \big\{ V(\ms O) - \epsilon
\big\}$ we deduce from the previous estimates that
\begin{equation*}
\bb P_{\nu^N_{\alpha, \beta}} \big[ H_N\le (1/2)  
e^{N \{ V(\ms O) - \epsilon\}} \, N^2\, T_\epsilon \big]
\;\le\; 1/2
\end{equation*}
for $N$ sufficiently large. Since, by assumption,
$H_N/E_{\nu^N_{\alpha, \beta}} [H_N]$ converges in distribution to a
mean one exponential random variable, we conclude from this inequality
that
\begin{equation*}
\limsup_{N\to\infty} \frac{e^{N \{ V(\ms O) - 2 \epsilon\}}}{\bb
  E_{\nu^N_{\alpha, \beta}} [H_N]} \;<\; \infty\;.
\end{equation*}
\end{proof}

\section{Hitting times of rare events in BDSSEP} 
\label{sec4}

We prove in this section Theorem \ref{s16}. Denote by $R_N(\eta,\xi)$
the rate at which the BDSSEP $\eta(t)$ jumps from $\eta$ to $\xi$.
Recall from \eqref{24} the distance $d$ introduced in $\ms M$. With
this choice, by Schwarz inequality,
\begin{equation}
\label{15}
d(\gamma,\gamma') \;\le\; \Vert \gamma - \gamma'\Vert_2\;,
\end{equation}
where $\Vert \,\cdot\,\Vert_2$ stands for the $L_2$ norm.

\begin{lemma}
\label{s12}
Fix an open subset $\ms O$ of $\ms M$ such that $d(\bar\rho, \ms
O)>0$.  Denote by $A_N$ the set of configurations in $\Omega_N$ for
which $\pi^N(\eta)$ belongs to $\ms O$: $A_N = \{\eta\in\Omega_N:
\pi^N(\eta)\in \ms O\}$. Then, there exists $a>0$ such that
\begin{equation*}
r_N(A^c_N,A_N)\;\le\; e^{-aN} \quad\text{and}\quad \nu^N_{\alpha,
  \beta} (A_N) \;\le\; e^{-aN}
\end{equation*}
for $N$ sufficiently large. 
\end{lemma}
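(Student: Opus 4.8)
The plan is to establish the two inequalities separately, both as consequences of the static large deviations principle (Theorem \ref{s09}) together with the strict positivity of the quasi-potential $V$ away from $\bar\rho$. I begin with the bound on $\nu^N_{\alpha,\beta}(A_N)$. Since $d(\bar\rho,\ms O)>0$, the closed set $\overline{\ms O}$ does not contain $\bar\rho$, and being a closed subset of the compact space $\ms M$ it is itself compact. As $V$ is lower semicontinuous and satisfies $V(\rho)>0$ for every $\rho\neq\bar\rho$, the infimum $\inf_{\gamma\in\overline{\ms O}}V(\gamma)$ is attained at some point different from $\bar\rho$ and is therefore strictly positive; denote it by $2a_1>0$. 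Applying the upper bound of Theorem \ref{s09} to the closed set $\overline{\ms O}$ and using $\nu^N_{\alpha,\beta}(A_N)=\nu^N_{\alpha,\beta}(\pi^N\in\ms O)\le\nu^N_{\alpha,\beta}(\pi^N\in\overline{\ms O})$, we obtain $\nu^N_{\alpha,\beta}(A_N)\le e^{-a_1N}$ for all $N$ large.

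For the jump rate $r_N(A^c_N,A_N)$, the key observation is that only configurations whose empirical density is close to $\ms O$ can contribute, because a single transition of the BDSSEP moves the empirical density by a vanishing amount. Indeed, each elementary transition is either a nearest-neighbour swap $\sigma^{x,x+1}$ or a boundary flip $\sigma^x$, and a direct computation shows that such a transition changes the empirical density by at most $\sqrt{2/N}$ in $L^2$ norm; hence, by \eqref{15}, $d(\pi^N(\xi),\pi^N(\zeta))\le\sqrt{2/N}$ whenever $R_N(\xi,\zeta)>0$. Consequently, if $\xi\in A^c_N$ and $R_N(\xi,A_N)>0$, then $\pi^N(\xi)$ lies in the closed $\sqrt{2/N}$-neighbourhood of $\ms O$. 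In addition, the total jump rate out of any configuration is bounded by $N/2$, so $R_N(\xi,A_N)\le N/2$ for every $\xi$.

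To conclude, set $\delta_0=\tfrac13\,d(\bar\rho,\ms O)>0$ and let $\ms O^+=\{\gamma\in\ms M:d(\gamma,\ms O)\le\delta_0\}$. This set is compact, and the triangle inequality gives $d(\bar\rho,\gamma)\ge 2\delta_0>0$ for all $\gamma\in\ms O^+$, so $\bar\rho\notin\ms O^+$ and, exactly as above, $\inf_{\gamma\in\ms O^+}V(\gamma)=:2a_2>0$. For $N$ large enough that $\sqrt{2/N}\le\delta_0$, the neighbourhood identified in the previous paragraph is contained in $\ms O^+$, and therefore
\begin{equation*}
\sum_{\xi\in A^c_N}\nu^N_{\alpha,\beta}(\xi)\,R_N(\xi,A_N)
\;\le\;\frac N2\,\nu^N_{\alpha,\beta}\big(\pi^N\in\ms O^+\big)
\;\le\;\frac N2\,e^{-(3a_2/2)N}
\end{equation*}
for $N$ large, the last step using Theorem \ref{s09}. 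Since $\nu^N_{\alpha,\beta}(A_N)\to0$ forces $\nu^N_{\alpha,\beta}(A^c_N)\ge\tfrac12$ eventually, dividing by $\nu^N_{\alpha,\beta}(A^c_N)$ and absorbing the polynomial prefactor into the exponential gives $r_N(A^c_N,A_N)\le e^{-a_2N/2}$ for $N$ large. Taking $a=\min\{a_1,a_2/2\}$ completes the proof. The only genuinely model-dependent ingredient is the elementary displacement estimate of the second paragraph, which is what allows the enlarged neighbourhood $\ms O^+$ to stay at positive distance from $\bar\rho$ while swallowing the polynomially large rate $R_N(\xi,A_N)\le N/2$ into an exponentially small static probability; the rest is a direct application of Theorem \ref{s09} and the positivity of $V$ off $\bar\rho$.
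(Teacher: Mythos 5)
Your proof is correct and follows essentially the same route as the paper's: show that every configuration contributing to $r_N(A^c_N,A_N)$ has empirical density in a closed neighbourhood of $\ms O$ which remains at positive distance from $\bar\rho$ (since a single jump displaces $\pi^N$ by $O(N^{-1/2})$), bound the total jump rate polynomially in $N$, and conclude via the static large deviation principle of Theorem \ref{s09} together with the positivity of $V$ away from $\bar\rho$. The only (cosmetic) difference is in how positivity of the infimum of $V$ over the enlarged set is justified: you use lower semicontinuity of $V$, compactness, and $V>0$ off $\bar\rho$, whereas the paper invokes the explicit entropy lower bound of \cite[Theorem A.1]{bdgjl3} to get the quantitative estimate $V(\gamma)\ge c_0\, d(\gamma,\bar\rho)^2$; both arguments are legitimate.
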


\begin{proof}
Let $\ms O_\delta$ is the closed set defined by $\ms O_\delta =
\{\gamma \in \ms M : d(\gamma, \overline{\ms O})\le\delta\}$,
$\delta>0$. We claim that there exists $\delta>0$ such that
\begin{equation}
\label{14}
\inf_{\gamma \in \ms O_\delta} V(\gamma) \;>\;0\;.
\end{equation}
Indeed, let $2\delta = d(\bar\rho, \ms O)>0$. It is clear from the
definition of $\ms O_\delta$ that $d(\bar\rho, \gamma) \ge \delta$ for
all $\gamma\in \ms O_\delta$.  On the other hand, by \cite[Theorem
A.1]{bdgjl3},
\begin{equation*}
V(\rho) \;\ge\; \int_0^1 \Big\{ \rho (\mb x) \log \frac{\rho (\mb x)}
{\bar \rho (\mb x)} + [1- \rho (\mb x)] \log \frac{[1- \rho (\mb x)]}
{[1- \bar \rho (\mb x)]} \Big\}\, d\mb x\;.
\end{equation*}
Therefore, since $0<\alpha\le\bar\rho(x)\le\beta<1$ and in view of
\eqref{15}, there exists $c_0>0$ such that for all $\gamma\in \ms
O_\delta$,
\begin{equation*}
V(\gamma) \;\ge\; c_0 \int_0^1 \big\{\gamma (\mb x) - \bar \rho (\mb x)
\big\}^2 \, d\mb x \;\ge\; c_0\, d(\gamma,\bar\rho)^2
\;\ge\; c_0\, \delta^2\;.
\end{equation*}

Denote by $\partial A_N$ the outer boundary of $A_N$:
\begin{equation*}
\partial A_N \;=\; \bigcup_{x=1}^{N-2}
\big\{\xi\not\in A_N : \sigma^{x,x+1}\xi\in A_N\} 
\bigcup_{z=1, N-1} 
\big\{\xi\not\in A_N : \sigma^{z}\xi\in A_N\}\;.
\end{equation*}
Since $\sum_{\xi\in\Omega_N} R_N(\eta,\xi) \le N$, by definition of
the average rate $r_N(A^c_N,A_N)$,
\begin{equation*}
r_N(A^c_N,A_N)\;\le\; \frac 1{\nu^N_{\alpha, \beta}(A^c_N)}\,  
N\, \nu^N_{\alpha, \beta} (\partial A_N)\; . 
\end{equation*}
It is clear that for each $\delta>0$, $\partial A_N\subset \{\eta\in
\Omega_N : \pi^N(\eta)\in \ms O_\delta\}$ for $N$ large enough. Hence,
by Theorem \ref{s09} and by \eqref{14}, there exists $a>0$ such
that
\begin{equation*}
\nu^N_{\alpha, \beta}(\partial A_N) \;\le\; \nu^N_{\alpha,
  \beta}(\pi^N \in \ms O_\delta) \;\le\; e^{-aN}
\end{equation*}
for $N$ sufficiently large. The same bound holds for $A_N$, which
proves the first part of the lemma.
\end{proof}

\subsection*{Estimation of the mixing time in the BDSSEP}

We show in this subsection by a coupling argument that
\begin{equation}
\label{20}
T^{\rm mix}_N \;\le\; (1/2) N^3\;.
\end{equation}
This bound is not sharp but sufficient for our purposes.

Assume that a coupling $(\eta_t, \xi_t)$ has been defined in the
product space $\Omega_N \times \Omega_N$. This means that both
coordinates evolve has the original BDSSEP and that the pair does not
leave the diagonal once it reaches it. We denote by $\bb P_{\eta,\xi}$
the distribution of the coupling when the initial configuration is
$(\eta,\xi)$. Denote by $H_{\bb D}$ the coupling time, the time the
process reaches the diagonal. It is well known that
\begin{equation*}
T^{\rm mix}_N \;\le\; \inf\big\{ t : \max_{\eta,\xi\in\Omega_N} 
\bb P_{\eta,\xi} [H_{\bb D} \ge t ] \le 1/4 \big\}\;.
\end{equation*}

The coupling of two copies of the BDSSEP is defined as follows.  Fix
two configurations $\eta$, $\xi$ in $\Omega_N$.  We assume that the
particles evolve according to a stirring dynamics and that particles
are created simultaneously in both coordinates at the boundary. In
particular, the coupled process has reached the diagonal when all
initial particles have left the system. Denote by $H_j$ the time the
particle initially at $j\in \Lambda_N$ leaves the system. If there are
no particles at $j$ set $H_j = 0$ and note that if $j$ is occupied by
an $\eta$-particle and a $\xi$-particle they both leave the system at
the same time due to the stirring dynamics. With this notation,
$H_{\bb D} \le \max_j H_j$ and for all $t>0$
\begin{equation*}
\bb P_{\eta,\xi} [H_{\bb D} \ge t ] \;\le\; \sum_{j\in\Lambda_N}
\bb P_{\eta,\xi} [H_{j} \ge t ]\;.
\end{equation*}
Under the stirring dynamics, the particle at $j$ performs a symmetric
random walk until it reaches the boundary. If we denote by
$H_{\dagger}$ the hitting time of the boundary, it is known that $\mb
E_j[H_{\dagger}] = (1/2) j(N-j) \le N^2/8$. The previous sum is thus
bounded by $N^3/8t$, which proves claim \eqref{20}.

\begin{proof}[Proof of Theorem \ref{s16}]
The first assertion of the proposition follows from Lemma \ref{s12},
\eqref{20} and Theorem \ref{s04}. The second one follows from Theorem
\ref{s06}.

To prove the third assertion, let $\gamma_N = N^4$ and consider the
enlarged process associated to this sequence. By \eqref{20} and by the
second assertion of the theorem, $T^{\rm mix}_N \ll \gamma^{-1}_N \ll
\bb E_{\nu^N_{\alpha, \beta}} [H_{\ms O}]$. 

Since $d\mu_N/d\nu^N_{\alpha, \beta} = \mb 1\{\eta\in B_N\}
\nu^N_{\alpha, \beta} (B_N)^{-1}$, $E_{\nu^N_{\alpha, \beta}} [
(d\mu_N/d\nu^N_{\alpha, \beta})^2] = \nu^N_{\alpha,
  \beta}(B_N)^{-1}$. Hence, as $R_N(\eta,\Omega_N) \le N$, the
expression appearing on the left hand side of \eqref{22} is bounded by
$N^5 \nu^N_{\alpha, \beta}(A_N) /\nu^N_{\alpha, \beta}(B_N)$. By
the static large deviation principle,
\begin{equation*}
\begin{split}
& \limsup \frac 1N \log \nu^N_{\alpha, \beta} (A_N) \;\le \;
\limsup \frac 1N \log \nu^N_{\alpha, \beta} (\pi^N \in
\overline{\ms O}) \;\le\; - \inf_{\gamma \in \overline{O}} V(\gamma)
\; , \\
& \quad \liminf \frac 1N \log \nu^N_{\alpha, \beta} (B_N) \;\ge \;
\liminf \frac 1N \log \nu^N_{\alpha, \beta} (\pi^N \in
\ms B^o) \;\ge\; - \inf_{\gamma \in \ms B^o} V(\gamma)\;.
\end{split}
\end{equation*}
Therefore, by assumption \eqref{23}, $N^5 \nu^N_{\alpha, \beta}(A_N)
/\nu^N_{\alpha, \beta}(B_N)$ vanishes as $N\uparrow\infty$. By remark
\eqref{22}, condition \eqref{19} is fulfilled. By Lemma \ref{s12} and
by \eqref{20}, the assumptions of Theorem \ref{s04} are in force. The
third assertion of the theorem follows therefore from Corollary
\ref{s13}.
\end{proof}

\smallskip\noindent{\bf Acknowledgments}. This problem was formulated
to the second author by A. Galves after a talk on nonequilibrium
stationary states at NUMEC-USP.  The authors would like to thank
A. Asselah and E. Scoppola for fruitfull discussions.

\end{document}